\documentclass{article}
\usepackage[scale=0.8]{geometry}

\usepackage{amsmath}
\usepackage{amssymb}
\usepackage{amsthm}
\usepackage{dsfont}
\usepackage{amsfonts}
\usepackage{cases}
\usepackage{url}
\theoremstyle{plain}
\newtheorem{theor}{Theorem}[section]
\newtheorem{lem}[theor]{Lemma}
\newtheorem{prop}[theor]{Proposition}
\newtheorem{cor}[theor]{Corollary}

\theoremstyle{definition}

\newtheorem{rems}[theor]{Remarks}

\newtheorem{rem}[theor]{Remark}

\usepackage[applemac]{inputenc}
\usepackage[polutonikogreek,english,frenchb]{babel}
\usepackage[T1]{fontenc}% always use this.
\usepackage{csquotes}% recommended by biblatex when using french.
\usepackage[toc,page]{appendix}
\usepackage{textcomp}
\usepackage{amscd}
\usepackage{graphicx}
\usepackage{enumerate}
\usepackage{cancel}
\usepackage{extpfeil}
\usepackage[all]{xy}
\usepackage{empheq}
\usepackage{esint}
\usepackage{mathtools}
\usepackage{float}
\usepackage{wrapfig}

\newcommand{\e}{\epsilon}

\newcommand{\R}{\mathbb R}

\newcommand{\B}{\mathcal B}

\newcommand{\Pc}{\mathcal P}
\newcommand{\Ec}{\mathcal E}

\newcommand{\pv}{\operatorname{p.v.}}

\newcommand{\Id}{\operatorname{Id}}

\newcommand{\Div}{\operatorname{div}}

\newcommand{\loc}{{\operatorname{loc}}}

\newcommand{\Ld}{\operatorname{L}}
\newcommand{\Bd}{\operatorname{B}}

\usepackage[toc,page]{appendix}

\newcommand{\cvf}[1]{\mathrel{\mathop{\xrightharpoonup{#1}}}}

\numberwithin{equation}{section}
%\newcommand{\cov}[2]{\left\langle{#1};{#2}\right\rangle}

%\usepackage[language=auto,backend=bibtex,natbib,style=authoryear-comp,citetracker=true,maxcitenames=2]{biblatex}
%\addbibresource{biblio.bib}
%\DeclareFieldFormat[article,incollection,unpublished]{title}{#1}
%\DefineBibliographyExtras{french}{\restorecommand\mkbibnamelast}
%\AtEveryCitekey{\ifciteseen{}{\defcounter{maxnames}{99}}}
%\DeclareSortingScheme{nty}{}

%\usepackage[language=auto,backend=bibtex,style=numeric]{biblatex}
%\addbibresource{biblio.bib}
%\DeclareFieldFormat[article,incollection,unpublished]{title}{#1}
%\DeclareSortingScheme{nty}{}

\usepackage{color}

\usepackage[colorlinks,citecolor=black,urlcolor=black]{hyperref}

\title{Mean-field limits for some Riesz interaction gradient flows}
\author{Mitia Duerinckx}
\date{}

\begin{document}
\selectlanguage{english}
\maketitle

This paper is concerned with the mean-field limit for the gradient flow evolution of particle systems with pairwise Riesz interactions, as the number of particles tends to infinity. Based on a modulated energy method, using regularity and stability properties of the limiting equation,
as inspired by the work of Serfaty~\cite{Serfaty-15} in the context of the Ginzburg-Landau vortices, we prove a mean-field limit result in dimensions $1$ and $2$ in cases for which this problem was still open.

%\tableofcontents
\bigskip
\section{Introduction}
We consider the energy of a system of $N$ particles in the Euclidean space $\R^d$ ($d\ge1$) interacting via (repulsive) Riesz pairwise interactions:
\[H_N(x_1,\ldots,x_N)=\sum_{i\ne j}^Ng_s(x_i-x_j),\qquad x_i\in\R^d,\]
where the interaction kernel is given by
\[g_s(x):=\begin{cases}c_{d,s}^{-1}|x|^{-s},&\text{if $0<s<d$};\\-c_{d,0}^{-1}\log(|x|),&\text{if $s=0$};\end{cases}\]
with $c_{d,s}>0$ some normalization constants. We note that the Coulomb case corresponds to the choice $s=d-2$, $d\ge2$. Particle systems with more general Riesz interactions as considered here are extensively motivated in the physics literature (cf.~for instance \cite{BBDR-05,Mazars-11}), as well as in the context of approximation theory with the study of Fekete points (cf.~\cite{Hardin-Saff-04} and the references therein). Recently, a detailed description of such systems beyond the mean-field limit in the static case was obtained in~\cite{Petrache-Serfaty-14}, and also in~\cite{Leble-Serfaty-15} for non-zero temperature. In the present contribution, we are rather interested in the dynamics of such systems, and more precisely in a rigorous justification of the mean-field limit of their gradient flow evolution as the number $N$ of particles tends to infinity, which has indeed remained an open problem whenever $s\ge d-2$, $s>0$, $d\ge2$.

We thus consider the trajectories $x_{i,N}^t$ driven by the corresponding flow, i.e. the solutions to the following system of ODEs:
\begin{align}\label{eq:vortexdyn}
\partial_t x_{i,N}^t=-\frac1N\nabla_iH_N(x_{1,N}^t,\ldots,x_{N,N}^t),\qquad x_{i,N}^t|_{t=0}=x_{i,N}^\circ,\qquad i=1,\ldots,N,
\end{align}
where $(x_{i,N}^\circ)_{i=1}^N$ is a sequence of $N$ distinct initial positions. Since energy can only decrease in time and since the interaction is repulsive, particles cannot collide, and moreover it is easily seen that a particle cannot escape to infinity in finite time; from these observations and from the Picard-Lindelöf theorem, we may conclude that the trajectories $x_{i,N}^t$ are smooth and well-defined on the whole of $\R^+:=[0,\infty)$. As the number of particles gets large, we would naturally like to pass to a continuum description of the system, in terms of the particle density distribution.
For that purpose, we define the empirical measure associated with the point-vortex dynamics:
\begin{align}\label{eq:empiricalmeas}
\mu_N^t:=\frac1N\sum_{i=1}^N\delta_{x_{i,N}^t},
\end{align}
and the question is then to understand the limit of $\mu_N^t$ as $N\uparrow\infty$. More precisely, assuming convergence at initial time
\[\mu_N^\circ:=\frac1N\sum_{i=1}^N\delta_{x_{i,N}^\circ}\cvf*\mu^\circ,\qquad\text{as $N\uparrow\infty$,}\]
formal computations lead us to expect under fairly general assumptions $\mu_N^t\cvf*\mu^t$ for all $t\ge0$, where $\mu^t$ is a solution to the following nonlocal nonlinear PDE on $\R^+\times\R^d$, sometimes called the {\it fractional porous medium equation}:
\begin{align}\label{eq:pdelim}
\partial_t\mu^t=\Div(\mu^t\nabla h^t),\qquad h^t:=
g_s\ast\mu^t,\qquad\mu^t|_{t=0}=\mu^\circ.
\end{align}
This equation in the weak sense just means the following: $\mu\in \Ld^1_{\loc}(\R^+;\Ld^1(\R^d))$, $g_s\ast\mu\in \Ld^1_\loc(\R^+;W^{1,1}_{\loc}(\R^d))$, $\mu\nabla g_s\ast \mu\in \Ld^1_{\loc}(\R^+;\Ld^1(\R^d))$, and, for all $\phi\in C^\infty(\R^+;C^\infty_c(\R^d))$ such that $\phi(t,\cdot)=0$ for all $t>0$ large enough,
\[\int_{\R^+}\int_{\R^d}\mu^t(x)(\partial_t\phi(t,x)-\nabla \phi(t,x)\cdot\nabla g_s\ast \mu^t(x))dxdt+\int_{\R^d}\mu^\circ(x)\phi(0,x)dx=0.\]
As far as existence issues as well as basic properties of the solutions of~\eqref{eq:pdelim} are concerned, we refer to~\cite{Caffarelli-Vazquez-11,Caffarelli-Soria-Vasquez-13} for $d-2<s<d$, $s\ge0$, to~\cite{Lin-Zhang-00,Du-Zhang-03,Ambrosio-Serfaty-08,Serfaty-Vasquez-14} for $s=d-2$, $d\ge2$, and to~\cite{Carrillo-Choi-Hauray-14} for $0\le s<d-2$.
%For $0\le s\le d-2$, uniqueness is proven e.g. in the class $\Ld^\infty(\R^+;\Ld^\infty(\R^d))$.
See also Proposition~\ref{prop:existence} below.
%Note that in the case $0\le s\le d-2$ uniqueness holds in the class $\Ld^\infty(\R^+;\Ld^\infty(\R^d))$ but fails in $\Ld^1(\R^+;\Ld^1(\R^d))$.

In the case of logarithmic interactions $s=0$, this expected mean-field limit result was essentially first proven (in arbitrary dimension) by Schochet~\cite{Schochet-96} based on his simplification~\cite{Schochet-95} of the proof of Delort's theorem~\cite{Delort-91} on existence of weak solutions to the 2D Euler equation with initial nonnegative vorticity in $H^{-1}$. Schochet's original paper~\cite{Schochet-96} was actually only concerned with the mean-field limit for a particle approximation of the 2D Euler equation, but the same argument directly applies to the present setting. However, due to a possible lack of uniqueness of $\Ld^1$ weak solutions to equation~\eqref{eq:pdelim}, Schochet~\cite{Schochet-96} could only prove that the empirical measure $\mu_N^t$ converges up to a subsequence to {\it some} solution of~\eqref{eq:pdelim}.
The key idea, which only holds for logarithmic interactions, consists in exploiting some logarithmic gain of integrability to find uniform bounds on the number of close particles, which allows to directly pass to the limit in the equation and conclude by a compactness argument.

In the case $0\le s<d-2$, $d\ge3$, the complete mean-field limit result (not restricted to a subsequence) was proven more recently by Hauray~\cite{Hauray-09} (see also~\cite{Carrillo-Choi-Hauray-14}), but his method, based on a control of the infinite Wasserstein distance, cannot be adapted at all to higher powers $s\ge d-2$. In the 1D case, Berman and \"Onnheim~\cite{Berman-Onnheim} obtained a similar result for the whole range $0\le s<1$, in the framework of Wasserstein gradient flows, but their method cannot be extended to higher dimensions.

Very recently, in the context of the 2D Gross-Pitaevskii and parabolic Ginzburg-Landau equations, Serfaty~\cite{Serfaty-15} proposed a new way of proving such mean-field limits\footnote{In~\cite{Serfaty-15}, the questions are different in nature, since they consist in passing to the limit in PDE evolutions, but are similar in spirit since one wishes to understand the limiting dynamics of point vortices which essentially behave like Coulomb-interacting particles.}, based on a Gronwall argument for the so-called {\it modulated energy}, which is some adapted measure of the distance to the (postulated) limit.
This idea of proof
originates in the relative entropy method first introduced by Yau~\cite{Yau-91} for hydrodynamic limits (see e.g.~\cite{StRaymond-09} and the references therein for later developments), the modulated Hamiltonian method used by Grenier~\cite{Grenier-99} for boundary layer problems, and the modulated energy method designed by Brenier~\cite{Brenier-00} for the quasi-neutral limit of the Vlasov-Poisson system.
The advantage of this method is to be completely global, bypassing the need for a precise understanding of the microscopic dynamics. It relies on the regularity of the solution to the limiting equation, and exploits its stability properties. As will be seen, however, we are able to apply this method in the present context only in dimensions $1$ and $2$ and for $s$ not too large. More precisely, we treat in 1D the whole range $0\le s<1$ as in~\cite{Berman-Onnheim}, while in 2D we treat but the case $0\le s<1$, which is new and in particular completes Schochet's partial result~\cite{Schochet-96} in the logarithmic case.
Our main result, for which we need an additional regularity assumption on the limiting equation (cf.~Remark~\ref{rem}(a) below),
is as follows:

\begin{theor}\label{th:mfl}
Let $d=1$ or $2$, and $0\le s<1$. Let $\mu^\circ$ be a probability measure, and in the case $s=0$ also assume $\int_{\R^d}\log(2+|x|)|\mu^\circ(x)|dx<\infty$. Assume that equation~\eqref{eq:pdelim} admits a solution $\mu^t$ that belongs to $\Ld^\infty([0,T]; C^{\sigma}(\R^d))$ for some $T>0$ and some $\sigma>2-d+s$. In the case $s=0$, $d=1$, also assume $\nabla \mu\in\Ld^\infty([0,T];\Ld^p(\R^d))$ for some $p<\infty$. Let $\mu_N^\circ\cvf{*}\mu^\circ$ as above,
%\begin{align*}%\label{eq:asboundmom2}
%\limsup_{N\uparrow\infty}\frac1N\sum_{i=1}^N|x_{i,N}^\circ|^2<\infty,
%\end{align*}
assume the convergence of the initial energy
\begin{align}\label{eq:asboundenergy}
\lim_{N\uparrow\infty}\frac1{N^2}H_N(x_{1,N}^\circ,\ldots,x_{N,N}^\circ)=\int_{\R^d}\int_{\R^d} g_s(x-y)d\mu^\circ(x)d\mu^\circ(y)<\infty,
\end{align}
and let $\mu_N^t$ be defined by~\eqref{eq:vortexdyn}--\eqref{eq:empiricalmeas}.
Then $\mu^t$ is the only weak solution to~\eqref{eq:pdelim} up to time $T$ in $\Ld^\infty([0,T];\Ld^\infty(\R^d))$, and for all $t\in[0,T]$ we have $\mu_N^t\cvf*\mu^t$, as well as the convergence of the energy
\[\lim_{N\uparrow\infty}\frac1{N^2}H_N(x_{1,N}^t,\ldots,x_{N,N}^t)=
%\iint_{x\ne y}g_s(x-y)d\mu_N^t(x)d\mu_N^t(y)\to
\int_{\R^d}\int_{\R^d} g_s(x-y)d\mu^t(x)d\mu^t(y)<\infty.\]
\end{theor}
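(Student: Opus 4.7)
The natural strategy is a Gronwall argument on a modulated energy that measures the squared ``distance'' between the empirical measure and the putative limit $\mu^t$. Concretely I would set
\[
F_N(t):=\iint_{\{x\ne y\}}g_s(x-y)\,d(\mu_N^t-\mu^t)(x)\,d(\mu_N^t-\mu^t)(y),
\]
which, after expanding, equals $\tfrac1{N^2}H_N(x_{1,N}^t,\ldots,x_{N,N}^t)-\tfrac{2}{N}\sum_i h^t(x_{i,N}^t)+\iint g_s\,d\mu^t\,d\mu^t$. The overall plan is (i) to verify that $F_N(0)\to0$, which follows from $\mu_N^\circ\cvf{*}\mu^\circ$ combined with the energy convergence assumption~\eqref{eq:asboundenergy} and lower semicontinuity, (ii) to establish a differential inequality $\tfrac{d}{dt}F_N(t)\le C(t)F_N(t)+o_N(1)$, and (iii) to upgrade $F_N(t)\to0$ into weak convergence $\mu_N^t\cvf{*}\mu^t$ and energy convergence.

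For step (ii) I would differentiate $F_N$ along the flow. Using the ODE~\eqref{eq:vortexdyn} and the PDE~\eqref{eq:pdelim}, a direct computation (the diagonal $x=y$ is harmless since particles never collide) yields, after symmetrization,
\[
\frac{d}{dt}F_N(t)=-\iint_{\{x\ne y\}}(v^t(x)-v^t(y))\cdot\nabla g_s(x-y)\,d(\mu_N^t-\mu^t)(x)\,d(\mu_N^t-\mu^t)(y),
\]
where $v^t:=-\nabla h^t=-\nabla g_s\ast\mu^t$ is the velocity field of the limiting equation. The cross terms produced by expanding each $\mu_N^t-\mu^t$ against the two copies of $v^t$ cancel the time-derivatives of the linear and constant parts of $F_N$; this is the point where the stability of~\eqref{eq:pdelim} enters through the transport-like structure.

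The main obstacle is to control the quadratic form
\[
\mathcal{Q}_N(t):=\iint_{\{x\ne y\}}\frac{v^t(x)-v^t(y)}{|x-y|}\cdot\frac{\nabla g_s(x-y)}{|x-y|^{-1}}\,d(\mu_N^t-\mu^t)(x)\,d(\mu_N^t-\mu^t)(y)
\]
by $F_N(t)$ plus a vanishing error. The kernel $\nabla g_s$ scales like $|x-y|^{-(s+1)}$, so after dividing by the Lipschitz factor of $v^t$ one is left with a kernel of order $|x-y|^{-s}$, i.e. essentially $g_s$ itself in the Riesz case $s>0$, and a logarithm in the Coulomb/log case. The plan is to exploit the regularity hypothesis $\mu^t\in C^\sigma$ with $\sigma>2-d+s$, which gives $v^t\in C^{1,\alpha}$ for some $\alpha>0$ with a suitable control on $|v^t(x)-v^t(y)|/|x-y|$, and to then dominate $\mathcal{Q}_N(t)$ by $\|v^t\|_{C^{1,\alpha}}\bigl(F_N(t)+o_N(1)\bigr)$ via a truncation/renormalization of the diagonal, reminiscent of Serfaty's commutator estimates in~\cite{Serfaty-15}. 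The restriction to $d\le2$ and $s<1$ is exactly what makes this singular integral manageable; the 1D case $s=0$ requires the extra $\nabla\mu\in L^p$ hypothesis to close the logarithmic loss. The additional regularity of $\mu^t$ above the exponent $2-d+s$ is the sharp threshold for this singular quadratic form to be absorbed by $F_N$.

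Once $F_N(t)\le e^{CT}F_N(0)+o_N(1)\to0$, step (iii) follows by noting that $F_N$ controls a weak norm of $\mu_N^t-\mu^t$ (of negative Sobolev type $\dot H^{-(d-s)/2}$), which metrizes weak-$*$ convergence on bounded subsets of probability measures. The energy convergence then follows by recombining $F_N(t)\to0$ with the definition of $F_N$ and the continuity of $h^t$. Uniqueness of $L^\infty$ solutions to~\eqref{eq:pdelim} on $[0,T]$ is a by-product: applying the same modulated-energy estimate with two solutions $\mu_1^t,\mu_2^t$ of the PDE (no particles at all) in place of $\mu_N^t,\mu^t$ gives a Gronwall inequality for the analogous quantity, forcing $\mu_1^t=\mu_2^t$.
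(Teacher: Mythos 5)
Your overall strategy — set up the modulated energy $F_N(t)=\iint_{x\ne y}g_s(x-y)\,d(\mu_N^t-\mu^t)^{\otimes 2}$, prove a Gronwall inequality for it, and then upgrade $F_N(t)\to0$ into weak-$*$ convergence plus energy convergence — is exactly the paper's strategy, and the by-product uniqueness argument is also as in the paper. However, two points in the middle of the argument are either wrong or glossed over, and they are precisely where the actual difficulty lies.

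First, your formula for $\tfrac{d}{dt}F_N(t)$ is incomplete: differentiating along the gradient flow produces, in addition to the commutator term $I_N(t):=\iint_{x\ne y}(\nabla h^t(x)-\nabla h^t(y))\cdot\nabla g_s(x-y)\,d(\mu_N^t-\mu^t)^{\otimes 2}$, a strictly nonpositive ``dissipation'' term
\[
-2\int_{\R^d}\Big|\operatorname{p.v.}\!\int_{\R^d\setminus\{x\}}\nabla g_s(x-y)\,d(\mu_N^t-\mu^t)(y)\Big|^2 d\mu_N^t(x),
\]
and this term cannot be discarded. In the paper's estimate of $I_N(t)$ (Step~2 of the proof of Proposition~\ref{prop:MFL}), one approximates $\nabla h^t$ by a field $v^t$ that is constant on the small balls around the particles, and the resulting error is of the form $R\int\big|\pv\int\nabla g_s\,d(\mu_N^t-\mu^t)\big|\,d\mu_N^t$. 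This is only controllable by Young's inequality if you have the negative square term to absorb it into; without the dissipation, you have no way to close the Gronwall loop. (This is also why the conservative flow $\alpha=0$ in Remark~\ref{rem}(e) cannot be treated by this method.)

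Second, the real mathematical content is entirely contained in your one-sentence promise to ``dominate $\mathcal{Q}_N(t)$ by $\|v^t\|_{C^{1,\alpha}}(F_N(t)+o_N(1))$ via a truncation/renormalization of the diagonal,'' and this is where a concrete mechanism is missing. The obstruction is that the natural way to bound a commutator of this type — turn it into a stress-energy (Delort) tensor, integrate by parts, and apply $2|ab|\le a^2+b^2$ — is not compatible with removing the diagonal, because $a^2+b^2$ re-introduces the divergent self-interactions. The paper's resolution requires (i) the Caffarelli--Silvestre extension to $\R^d\times\R$ to even have a local stress-energy tensor for the Riesz kernel when $s\ne d-2$, (ii) smearing the Dirac masses at scale $\eta$ (Lemma~\ref{lem:linkENeta0}) so that the diagonal contribution appears as an explicit $g_s(\eta)/N$ that one can subtract, and (iii) the Sandier--Jerrard ball construction (Lemma~\ref{lem:comput}, Proposition~\ref{prop:lowerbound}) to localize that divergent self-energy inside a union of balls of total radius $R_N^t\to0$, outside of which $2|ab|\le a^2+b^2$ is safe. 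The restriction $s<1$ (hence $d\le 2$ in combination with $s\ge d-2$) is not a soft consequence of ``$C^{1,\alpha}$ making the singular integral manageable'': it comes from the ball construction, which forces $R_N^t\gg N^{-(1-s)/s}$ for $s>0$, compatible with $R_N^t\to0$ only when $s<1$. Similarly, the extra $\nabla\mu\in L^p$ hypothesis for $s=0$, $d=1$ is needed because $\nabla v^t\in L^p$ is used in~\eqref{eq:bound2bisconcl}, not merely to ``close a logarithmic loss'' in the singular integral. Finally, your step (iii) also skips the tightness issue: $F_N$ with the diagonal removed is not a metric on $\Pc(\R^d)$, and the paper needs either an explicit tightness hypothesis or Corollary~\ref{cor:convprop} (convergence of $\nabla h_N^t$ in $L^p_{\loc}$, $p<2d/(s+d)$) to extract weak-$*$ convergence from $\limsup F_N(t)\le 0$.
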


\begin{rems}$   $\label{rem}\begin{enumerate}[(a)]
\item For a compactly supported probability measure $\mu^\circ\in\Ld^\infty(\R^d)$, equation~\eqref{eq:pdelim} always admits a solution in $\Ld^\infty(\R^+;\Ld^\infty(\R^d))$, which remains a compactly supported probability measure for all times (see Proposition~\ref{prop:existence} below). As far as the additional regularity assumption is concerned, as explained in Section~\ref{chap:propeqn}, it has been proven to hold with $T=\infty$ in the case $0\le s\le d-2$, $d\ge2$, and at least up to some time $T>0$ in the case $d-2<s\le d-1$, $s\ge0$, for sufficiently smooth initial data $\mu^\circ$,
%but it seems to be in general a nontrivial open problem for $d-2<s<d$, $s\ge0$. As long as it remains unsolved, the additional regularity assumption in Theorem~\ref{th:mfl} above is crucially needed.\\
% (cf. \cite[Theorem~1]{Lin-Zhang-00}). For $d-2<s\le d-1$, the same regularity result has been recently shown to hold at least up to some time $T>0$ (cf. \cite{Zhou-16}),
but all other cases remain unsolved, and the additional regularity assumption in Theorem~\ref{th:mfl} above is then crucially needed.\\
In the 2D Coulomb case $s=0$, $d=2$, as this regularity problem is solved (cf. \cite[Theorem~1]{Lin-Zhang-00}), the conclusion of Theorem~\ref{th:mfl} above holds in that case with $T=\infty$ (that is, with $\Ld^\infty([0,T];\cdot)$ replaced by $\Ld^\infty_\loc(\R^+;\cdot)$), whenever we have $\mu^\circ\in\Pc(\R^2)\cap C^{\sigma}(\R^2)$ for some $\sigma>0$, and $\int_{\R^2}\log(2+|x|)|\mu^\circ(x)|dx<\infty$. This completes Schochet's partial result~\cite{Schochet-96}. As the regularity problem is further solved in short time in Sobolev spaces in the case $d-2<s\le d-1$, $s\ge0$ (cf.~\cite{Zhou-16}), the conclusion of Theorem~\ref{th:mfl} above holds for some $T>0$ (depending on initial data) in the case $0<s\le 1$, $d=2$, whenever $\mu^\circ\in \Pc(\R^2)\cap H^{\sigma}(\R^2)$ for some $\sigma>2$, and also in the 1D logarithmic case $s=0$, $d=1$, whenever $\mu^\circ\in \Pc(\R)\cap H^{\sigma}(\R)$ for some $\sigma>3/2$, and $\int_{\R}\log(2+|x|)|\mu^\circ(x)|dx<\infty$.
%These available regularity results are described in more details at the end of Section~\ref{chap:propeqn}. They give in particular that the conclusion of Theorem~\ref{th:mfl} above holds with $T=\infty$ in the 2D Coulomb case $s=0$, $d=2$, whenever $\mu^\circ\in\Pc(\R^d)\cap C^{\sigma}_c(\R^d)$, $\sigma>0$, which completes Schochet's partial result~\cite{Schochet-96} in that case. It also holds for some $T>0$ (depending on initial data) in the case $0<s\le 1$, $d=2$, whenever $\mu^\circ\in \Pc(\R^d)\cap C^{3+\sigma}_c(\R^d)$, $\sigma>0$, and in the 1D logarithmic case $s=0$, $d=1$, whenever $\mu^\circ\in \Pc(\R^d)\cap C^{2+\sigma}_c(\R^d)$, $\sigma>0$.
\item A closer look at the proof actually shows the following quantitative statement, where the distance between $\mu_N^t$ and $\mu^t$ is measured in terms of the modulated energy: for all $t\in[0,T]$, we have for some $C_t>0$
\begin{align*}
&\iint_{x\ne y}g_s(x-y)d(\mu_N^t-\mu^t)(x)d(\mu_N^t-\mu^t)(y)\\
\le~&C_t\iint_{x\ne y}g_s(x-y)d(\mu_N^\circ-\mu^\circ)(x)d(\mu_N^\circ-\mu^\circ)(y)+ C_t \begin{cases}N^{-\frac{(1-s)(1-\sigma)}{1+s-\sigma}},&\text{if $s>0$};\\ N^{-1}\log N,&\text{if $s=0$}.\end{cases}
\end{align*}
\item The well-preparedness assumption~\eqref{eq:asboundenergy} for the initial positions $(x_{i,N}^\circ)_{i=1}^N$, $N\ge0$, is statistically relevant, in the sense that it is automatically satisfied almost surely if, for each $N$, the positions $(x_{i,N}^\circ)_{i=1}^N$ are chosen to be independent $\R^d$-valued random variables with law $\mu^\circ$. This easily follows from the strong law of large numbers, together with the bound (for $s>0$)
\begin{align*}
\iint |g_s(x-y)|d\mu^\circ(x)d\mu^\circ(y)&\lesssim\iint_{|x-y|\le1} |x-y|^{-s}d\mu^\circ(x)d\mu^\circ(y)+\iint_{|x-y|>1} d\mu^\circ(x)d\mu^\circ(y)\lesssim \|\mu^\circ\|_{\Ld^\infty}+1.
\end{align*}
\item\label{eq:+pot} We may also add to the energy a potential $V$, thus turning $H_N$ into $\tilde H_N:=H_N+V$. If $V\in C^2(\R^d)$ satisfies $\|\nabla^2V\|_{\Ld^\infty}<\infty$, then all the arguments may be directly adapted, as long as the corresponding limit equation
\[\partial_t\mu^t=\Div(\mu^t\nabla (h^t+V)),\qquad h^t:=g_s\ast \mu^t,\qquad\mu^t|_{t=0}=\mu^\circ,\]
admits a regular enough solution.
\item In dimension $d=2$, we could also consider a mix between the gradient flow~\eqref{eq:vortexdyn} and its conservative counterpart, that is, replacing~\eqref{eq:vortexdyn} by the following system of ODEs, for $i=1,\ldots,N$,
\[\partial_tx_{i,N}^t=-\frac\alpha N\nabla_i H_N(x_{1,N}^t,\ldots,x_{N,N}^t)-\frac\beta N\nabla_i^\bot H_N(x_{1,N}^t,\ldots,x_{N,N}^t)-\nabla V(x_{i,N}^t),\qquad x_{i,N}^t|_{t=0}=x_{i,N}^\circ,\]
where we have also added a potential $V$ as in item~\eqref{eq:+pot} above. If $\alpha>0$, then all the arguments may again be directly adapted, as long as the corresponding limit equation
\[ \partial_t\mu^t=\Div(\mu^t(\alpha\nabla h^t+\beta\nabla^\bot h^t+\nabla V)),\qquad h^t:=g_s\ast\mu^t,\qquad \mu^t|_{t=0}=\mu^\circ,\]
admits a regular enough solution. (Note that the same proof can a priori not work for the choice $\alpha=0$, since in Step~2 of the proof of Proposition~\ref{prop:MFL} below some term cannot be estimated directly and needs instead to be absorbed using the negative diffusion term, which would however vanish in the case $\alpha=0$.)
\end{enumerate}\end{rems}

\medskip
\noindent{\bf Notation.}
Denote by $B(x,r)$ the ball of radius $r$ centered at $x$ in $\R^d$, and set $B_r:=B(0,r)$. We also use the notation $a\wedge b=\min\{a,b\}$ and $a\vee b=\max\{a,b\}$ for all $a,b\in\R$.
The space of probability measures on $\R^d$ is denoted by $\Pc(\R^d)$, and, for all $\sigma>0$, $C^\sigma(\R^d)$ stands as usual for the Hölder space $C^{\lfloor \sigma\rfloor,\sigma-\lfloor \sigma\rfloor}(\R^d)$, while $C_c^\sigma(\R^d)$ denotes the subspace of compactly supported functions.
In the sequel, $C$ denotes any positive constant only depending on $d$ and $s$.
We denote by $C_t$ any positive constant only depending on $d$, $s$ and on time $t$, such that $C_t\le C_T$ for all $t\in[0,T]$ and all $T>0$. We also denote $x\lesssim y$ and $x\lesssim_ty$ for $x\le Cy$ and $x\le C_ty$, respectively, and we use the notation $x\simeq y$ if both $x\lesssim y$ and $y\lesssim x$ hold. Finally, we denote by $o_a(1)$ a quantity that goes to~$0$ when the parameter $a$ goes to its limit, uniformly with respect to other parameters, and we write $o_a^{(b)}(1)$ if it converges to $0$ only for any {\it fixed} value of the parameter $b$.

\section{Proof of Theorem~\ref{th:mfl}}

\subsection{Strategy of the proof}

Translating the idea of~\cite{Serfaty-15} in the present setting (see also~\cite{Brenier-00}), the clue of the proof of Theorem~\ref{th:mfl} comes from the following stability estimate, which we first present for simplicity in the Coulomb case.

\begin{lem}[Stability --- Coulomb case]\label{lem:proofsmooth}
Let $s=d-2, d\ge2$. Let $\mu_1^\circ,\mu_2^\circ\in\Pc(\R^d)\cap\Ld^\infty(\R^d)$, and in the case $d=2$ also assume $\int_{\R^2}\log(2+|x|)(|\mu_1^\circ(x)|+|\mu_2^\circ(x)|)dx<\infty$. For $i=1,2$, let $\mu_i^t$ be a weak solution of equation~\eqref{eq:pdelim} with initial condition $\mu_i^\circ$, denote $h_i^t:=g_{d-2}\ast \mu_i^t$, and assume $\mu_1,\mu_2\in \Ld^\infty([0,T];\Ld^\infty(\R^d))$ and $\nabla^2h_2\in\Ld^1([0,T];\Ld^\infty(\R^d))$ for some $T>0$. Then, for all $t\in[0,T]$,
\begin{align}\label{eq:proofsmooth0}
&\int_{\R^d}\int_{\R^d} g_{d-2}(x-y)d(\mu_1^t-\mu_2^t)(x)d(\mu_1^t-\mu_2^t)(y)\\
\le~& e^{C\int_0^t\|\nabla^2h_2^u\|_{\Ld^\infty}du}\int_{\R^d}\int_{\R^d} g_{d-2}(x-y)d(\mu_1^\circ-\mu_2^\circ)(x)d(\mu_1^\circ-\mu_2^\circ)(y).\nonumber
\end{align}
\end{lem}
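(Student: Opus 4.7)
The plan is to read the modulated energy as a multiple of the squared $H^{-1}$-norm of $\mu_1^t-\mu_2^t$ and close a Gronwall argument on it. Since $s=d-2$, the kernel $g_{d-2}$ is a multiple of the Newtonian potential, so $-\Delta h_i^t=c\,\mu_i^t$ for some constant $c>0$; writing $u^t:=h_1^t-h_2^t$ and $v^t:=\mu_1^t-\mu_2^t$, a formal integration by parts gives
\begin{equation*}
E(t):=\iint g_{d-2}(x-y)\,dv^t(x)dv^t(y)=\int u^t\,dv^t=c^{-1}\int_{\R^d}|\nabla u^t|^2\,dx\ge0.
\end{equation*}
In the two-dimensional logarithmic case, the log-moment assumption on $\mu_i^\circ$ guarantees enough decay of $u^t$ at infinity for this identity (and the integrations by parts performed below) to make sense.

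The core step is to differentiate $E$ in time. Using $\partial_t\mu_i=\Div(\mu_i\nabla h_i)$ and the symmetry of $g_{d-2}$, I find
\begin{equation*}
\tfrac{d}{dt}E(t)=2\int u^t\,\partial_tv^t\,dx=-2\int_{\R^d}\nabla u^t\cdot(\mu_1^t\nabla h_1^t-\mu_2^t\nabla h_2^t)\,dx.
\end{equation*}
I would then use the asymmetric splitting $\mu_1\nabla h_1-\mu_2\nabla h_2=\mu_1\nabla u+v\nabla h_2$, substitute $v=-c^{-1}\Delta u$ in the resulting term $\int v\,\nabla u\cdot\nabla h_2\,dx$, and integrate by parts twice: the first integration by parts produces a symmetric $(\nabla^2 u)\nabla u=\tfrac12\nabla|\nabla u|^2$ piece together with a $(\nabla^2 h_2)\nabla u$ piece, and a second integration by parts on the first piece turns $\Delta h_2$ back into $-c\mu_2$. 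The outcome is the identity
\begin{equation*}
\tfrac{d}{dt}E(t)=-\int_{\R^d}(2\mu_1^t+\mu_2^t)|\nabla u^t|^2\,dx-2c^{-1}\int_{\R^d}(\nabla u^t)^{\top}\nabla^2h_2^t\,\nabla u^t\,dx.
\end{equation*}

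The first integral is nonpositive (since $\mu_1^t,\mu_2^t\ge0$, which is inherited from the transport structure of~\eqref{eq:pdelim}) and can simply be discarded, while the remaining commutator-type term is controlled pointwise by $\|\nabla^2h_2^t\|_{\Ld^\infty}|\nabla u^t|^2$, so
\begin{equation*}
\tfrac{d}{dt}E(t)\le 2c^{-1}\|\nabla^2h_2^t\|_{\Ld^\infty}\int_{\R^d}|\nabla u^t|^2\,dx=C\|\nabla^2h_2^t\|_{\Ld^\infty}E(t).
\end{equation*}
Gronwall's lemma then delivers~\eqref{eq:proofsmooth0}.

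The main obstacle is to make these manipulations rigorous for merely $\Ld^\infty$ weak solutions of~\eqref{eq:pdelim}: the time derivative and especially the second integration by parts (which calls for $\nabla^2 h_2$) are not directly legitimate at this level of regularity. The standard remedy is to convolve $\mu_1^t$ and $\mu_2^t$ with a smooth mollifier at scale $\eta>0$, carry out the computation above on the regularizations (for which all integrals converge absolutely and the commutator identity holds pointwise in time), and then pass to the limit $\eta\downarrow 0$ using the $\Ld^\infty$ bound on $\mu_i^t$ together with the assumed $\Ld^1([0,T];\Ld^\infty)$ bound on $\nabla^2h_2$. In two dimensions the logarithmic moment assumption is used precisely to ensure that the far-field behaviour of $u^t$ does not spoil the identity $E=c^{-1}\|\nabla u\|_{\Ld^2}^2$ nor the subsequent integrations by parts.
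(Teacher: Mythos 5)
Your proof is correct and follows essentially the same route as the paper's. You compute $\partial_t E=-2\int\nabla u\cdot(\mu_1\nabla h_1-\mu_2\nabla h_2)$, use the same asymmetric splitting $\mu_1\nabla h_1-\mu_2\nabla h_2=\mu_1\nabla u+v\nabla h_2$ so that the first piece $-2\int\mu_1|\nabla u|^2$ has a sign, and then transfer derivatives onto $h_2$ so that only $\nabla^2 h_2$ appears, which is exactly what the paper does. The one cosmetic difference is in the bookkeeping for the second piece: the paper packages $2\nabla u\,\Delta u=\Div(2\nabla u\otimes\nabla u-\Id|\nabla u|^2)$ and integrates by parts once against $\nabla h_2$, whereas you substitute $v=-c^{-1}\Delta u$ and integrate by parts twice; unwinding the paper's stress--energy contraction $T:\nabla^2 h_2=2(\nabla u)^\top\nabla^2 h_2\nabla u-|\nabla u|^2\Delta h_2$ gives precisely your two terms, so the algebra is identical. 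A small bonus of your presentation is that you retain the $-\int\mu_2|\nabla u|^2$ term and notice it is signed (the paper simply bounds the whole tensor in magnitude by $|\nabla u|^2$); this is harmless either way. One thing worth being aware of: the paper's deliberate choice to write the quadratic form as the divergence of a stress--energy tensor is not gratuitous — it is the formulation that survives the passage to the non-Coulomb Riesz case (Lemma~\ref{lem:proofsmooth2}) via the Caffarelli--Silvestre extension, where the substitution $v=-c^{-1}\Delta u$ is no longer available because the kernel is nonlocal. So while for the Coulomb lemma your rearrangement is equivalent and perfectly fine, the tensor form is the one that generalizes. Your remarks on regularity (mollifying to justify the integrations by parts, and invoking the log-moment hypothesis in $d=2$) match what the paper does by appealing to Proposition~\ref{prop:existence}(ii).
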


\begin{proof}
Proposition~\ref{prop:existence}(ii) below yields $\nabla (h_1-h_2)\in\Ld^\infty(\R^+;\Ld^2(\R^d))$. Combining this with the additional boundedness assumptions, all integration by parts arguments in the sequel may be justified.

Using the equations for $\mu_1^t$ and $\mu_2^t$, the time derivative of the left-hand side of~\eqref{eq:proofsmooth0} can be computed as follows:
\begin{align}
&\partial_t\int_{\R^d}\int_{\R^d} g_{d-2}(x-y)d(\mu_1^t-\mu_2^t)(x)d(\mu_1^t-\mu_2^t)(y)\nonumber\\
=~&2\int_{\R^d}(h_1^t-h_2^t)( \partial_t\mu_1^t-\partial_t\mu_2^t)=-2\int_{\R^d} \nabla (h_1^t-h_2^t) (\mu_1^t\nabla h_1^t- \mu_2^t\nabla h_2^t)\nonumber\\
=~&-2\int_{\R^d} |\nabla (h_1^t-h_2^t)|^2\mu_1^t-2\int_{\R^d} \nabla h_2^t\cdot \nabla (h_1^t-h_2^t)\, (\mu_1^t- \mu_2^t).\label{eq:proofsmooth}
\end{align}
The first term in the right-hand side is nonpositive, so it suffices to estimate the second one. Using the relations $-\Delta h_i^t=\mu_i^t$, $i=1,2$ (which hold with a unit factor for the suitable choice of the normalization constant $c_{d,d-2}>0$), the product $\nabla (h_1^t-h_2^t)\, (\mu_1^t- \mu_2^t)$ may be rewritten à la Delort using the stress-energy tensor:
\begin{align}
-2\nabla (h_1^t-h_2^t)\, (\mu_1^t- \mu_2^t)&=2\nabla (h_1^t-h_2^t)\,\Delta (h_1^t-h_2^t)\nonumber\\
&=\Div\left(2\nabla(h_1^t-h_2^t)\otimes\nabla(h_1^t-h_2^t)-\Id|\nabla (h_1^t-h_2^t)|^2\right),\label{eq:delortform}
\end{align}
where the divergence of a $2$-tensor here denotes the vector whose coordinates are the divergences of the corresponding columns of the tensor. Combining this with an integration by parts, we find
\begin{align*}
2\int_{\R^d} \nabla h_2^t\cdot \nabla (h_1^t-h_2^t)\, (\mu_1^t- \mu_2^t)&=-\int_{\R^d} \Div\left(2\nabla(h_1^t-h_2^t)\otimes\nabla(h_1^t-h_2^t)-\Id|\nabla (h_1^t-h_2^t)|^2\right)\cdot\nabla h_2^t\\
&=\int_{\R^d} \left(2\nabla(h_1^t-h_2^t)\otimes\nabla(h_1^t-h_2^t)-\Id|\nabla (h_1^t-h_2^t)|^2\right):\nabla^2 h_2^t.
\end{align*}
The inequality $2|ab|\le a^2+b^2$ and an integration by parts then yield
\begin{align}
\bigg|\int_{\R^d} \nabla h_2^t\cdot \nabla (h_1^t-h_2^t)\, (\mu_1^t- \mu_2^t)\bigg|&\lesssim \|\nabla^2 h_2^t\|_{\Ld^\infty} \int_{\R^d} |\nabla (h_1^t-h_2^t)|^2\nonumber\\
&= \|\nabla^2 h_2^t\|_{\Ld^\infty} \int_{\R^d}  (h_1^t-h_2^t) (\mu_1^t-\mu_2^t)\nonumber\\
&=\|\nabla^2 h_2^t\|_{\Ld^\infty} \int_{\R^d}\int_{\R^d} g_{d-2}(x-y)d(\mu_1^t-\mu_2^t)(x)d(\mu_1^t-\mu_2^t)(y),\label{eq:ippCoul}
\end{align}
so that the result~\eqref{eq:proofsmooth0} follows from~\eqref{eq:proofsmooth} and a Gronwall argument.
\end{proof}

We are also interested here in the non-Coulomb case $d-2<s<d$, $s\ge0$, and hence, just as in~\cite{Petrache-Serfaty-14}, we need to use the extension method of Caffarelli and Silvestre~\cite{Caffarelli-Silvestre-07} (cf. Section~\ref{chap:caffsilv} below) in order to find a similar Delort-type formula as in~\eqref{eq:delortform} of the proof above, and then repeat the same integration by parts argument, thus circumventing the fact that the Riesz kernel is not the convolution kernel of a local operator. This allows to prove the same estimate as above in all cases $0\vee(d-2)\le s<d$ with $g_{d-2}$ replaced by $g_s$ (cf. Lemma~\ref{lem:proofsmooth2} below).

This stability estimate gives us a control of the $H^{-1}$-distance (or the $H^{-(d-s)/2}$-distance, for general $0\le s<d$) between $\mu_1^t$ and $\mu_2^t$ in terms of the initial distance, up to a factor that only depends on the regularity of $\mu_2^t$ in the form of $\|\nabla^2 h_2^t\|_{\Ld^\infty}$. We would then like to replace $\mu_2^t$ by the smooth solution $\mu^t$ and to replace $\mu_1^t$ by $\mu_N^t$. However, the corresponding distance would then be infinite because of the presence of Dirac masses, and moreover $\mu_N^t$ does not exactly satisfy the limiting equation~\eqref{eq:pdelim}. The idea of the proof of Theorem~\ref{th:mfl} consists in finding a suitable way of adapting the proof above to that setting.

First, the natural way of giving a meaning to this divergent distance between $\mu_N^t$ and $\mu^t$ simply consists in excluding the diagonal terms, thus considering
the (renormalized) {\it modulated energy}
\[\Ec_N(t)=\iint_{x\ne y}g_s(x-y)d(\mu_N^t-\mu^t)(x)d(\mu_N^t-\mu^t)(y).\]
The goal is then to compute the time-derivative $\partial_t\Ec_N(t)$, and trying to adapt the proof of the stability estimate above to bound it by $C\Ec_N(t)$ for some constant $C>0$, up to a vanishing additive error. However, at the end of the proof above, the use of the inequality $2|ab|\le a^2+b^2$ is clearly not compatible with the removal of the diagonal terms. To solve this main issue, the crucial idea is due to Serfaty~\cite{Serfaty-15}: regularizing the Dirac masses at a (small) scale $\eta$ so that the diagonal terms become well-defined and diverge only as $\eta\downarrow0$, we need to try to construct around the particle locations small balls that contain most of the divergent $\eta$-approximate energy, so that excluding diagonal terms essentially amounts to restricting the $\eta$-approximate integrals to outside these small balls. Using the same approximation argument as in~\cite{Serfaty-15} to be allowed to restrict all integrals to outside these balls, the end of the proof above is then easily adapted, using the inequality $2|ab|\le a^2+b^2$ only on the restricted domain.

In this way, for any $0\vee(d-2)\le s<d$, we manage to prove $\partial_t\Ec_N(t)\le C\Ec_N(t)+o_N(1)$ under some mesoscopic regularity assumption on the distribution of the particles in time (cf. Proposition~\ref{prop:MFL} below). Finally, in the case $s<1$ (hence our limitation to that regime), these conditions can be directly checked using a modification of the ball construction introduced by~\cite{Sandier-98,Jerrard-99} for the analysis of the Ginzburg-Landau vortices (cf. Section~\ref{chap:ball0} below).

%[[In the modulated energy $\Ec_N(t)$, divergent diagonal terms are excluded. ..... La difficulté principale est de remplacer Delort, et que c'est fait par cette procédure d'approximation qui remplace les intégrales par celles hors de boules qui contiennent presque toute l'énergie et que cette idée est introduite dans mon article (en fait je pense que c'est plus ca l'originalité de l'article que l'énergie modulée elle-même, qui a des analogues dans d'autres contextes, comme les entropies modulées en équations cinétiques)]]

\subsection{Extension representation for fractional Laplacian}\label{chap:caffsilv}

We recall here the extension representation for the fractional Laplacian by Caffarelli and Silvestre~\cite{Caffarelli-Silvestre-07} (we follow notation of \cite[Section~1.2]{Petrache-Serfaty-14}). Let $0\vee(d-2)< s<d$ be fixed. For a finite Borel measure $\mu$ on $\R^d$, the associated Riesz potential $(-\Delta)^{-(d-s)/2}\mu$ can be written (up to a constant) as
\begin{align}\label{eq:defriesz}
h^\mu(x):=c_{d,s}^{-1}\int_{\R^d}|x-z|^{-s}d\mu(z)=g_s\ast\mu(x).
\end{align}
We denote coordinates in $\R^d\times\R$ by $(x,\xi)$, and we denote by $\mu\delta_{\R^d\times\{0\}}$ the Borel measure on $\R^d\times\R$ defined as follows: for all $\phi\in C^\infty_c(\R^d\times\R)$,
\[\int_{\R^d\times\R} \phi(x,\xi) d(\mu\delta_{\R^d\times\{0\}})(x,\xi):=\int_{\R^d} \phi(x,0) d\mu(x).\]
Extending $h^\mu$ to $\R^{d}\times\R$ via
\begin{align*}
h^\mu(x,\xi):=\int_{\R^{d}}|(x,\xi)-(z,0)|^{-s}d\mu(z)=g_s\ast (\mu\delta_{\R^d\times\{0\}})(x,\xi),
\end{align*}
where we denote $g_s(x,\xi)=c_{d,s}^{-1}|(x,\xi)|^{-s}$, and choosing $\gamma:=s+1-d\in(-1,1)$, the extended function $h^\mu$ on $\R^{d}\times\R$ satisfies in the distributional sense
\[-\Div(|\xi|^{\gamma}\nabla h^\mu)=\mu\delta_{\R^d\times\{0\}}.\]
The function $g_s$ is indeed a fundamental solution of the operator $-\Div(|\xi|^\gamma\nabla)$ on $\R^{d}\times\R$, in the sense that $-\Div(|\xi|^\gamma\nabla g_s)=\delta_0$ on $\R^d\times\R$. The normalization constant $c_{d,s}$ is chosen exactly to satisfy this property with a unit factor (for an explicit value, see Step~1 of the proof of Lemma~\ref{lem:comput} below).

In the case $s=0$, $d=1$, denoting $g_0(x,\xi)=-c_{d,0}^{-1}\log(|(x,\xi)|)$, we have $-\Delta g_0=\delta_0$ on the extended space $\R\times\R$, for the suitable choice of the normalization constant $c_{1,0}$, so the above again holds with $\gamma=0=s+1-d$.
(In the Coulomb case $s=d-2$, $d\ge2$, no extension is needed, and the normalization $c_{d,d-2}$ is simply chosen such that $-\Delta g_{d-2}=\delta_0$ on $\R^d$.)

Using this extension representation, we may now directly adapt the stability estimate of Lemma~\ref{lem:proofsmooth} to the non-Coulomb case:

\begin{lem}[Stability --- Riesz case]\label{lem:proofsmooth2}
Let $0\vee(d-2)\le s<d$. Let $\mu_1^\circ,\mu_2^\circ\in\Pc(\R^d)\cap\Ld^\infty(\R^d)$, and in the case $s=0$ also assume $\int_{\R^d}\log(2+|x|)(|\mu_1^\circ(x)|+|\mu_2^\circ(x)|)dx<\infty$. For $i=1,2$, let $\mu_i^t$ be a weak solution of equation~\eqref{eq:pdelim} with initial condition $\mu_i^\circ$, denote $h_i^t:=g_{s}\ast \mu_i^t$, and assume $\mu_1,\mu_2\in \Ld^\infty([0,T];\Ld^\infty(\R^d))$ and $\nabla^2h_2\in\Ld^1([0,T];\Ld^\infty(\R^d))$ for some $T>0$. Then, for all $t\in[0,T]$,
\begin{align}\label{eq:proofsmooth20}
&\int_{\R^d}\int_{\R^d} g_{s}(x-y)d(\mu_1^t-\mu_2^t)(x)d(\mu_1^t-\mu_2^t)(y)\\
\le~& e^{C\int_0^t\|\nabla^2h_2^u\|_{\Ld^\infty}du}\int_{\R^d}\int_{\R^d} g_{s}(x-y)d(\mu_1^\circ-\mu_2^\circ)(x)d(\mu_1^\circ-\mu_2^\circ)(y).\nonumber
\end{align}
\end{lem}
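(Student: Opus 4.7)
The plan is to mimic line-by-line the Coulomb-case proof of Lemma~\ref{lem:proofsmooth}, replacing the local identity $-\Delta h_i = \mu_i$ by the Caffarelli--Silvestre extension relation $-\Div(|\xi|^\gamma \nabla h_i^t) = \mu_i^t \delta_{\R^d \times \{0\}}$ on the extended space $\R^d\times\R$, with $\gamma = s+1-d \in (-1,1)$, and substituting the Delort-type tensor of~\eqref{eq:delortform} by its weighted analogue tailored to the degenerate elliptic operator $-\Div(|\xi|^\gamma \nabla \cdot)$. The Coulomb case $s=d-2$ is already handled by Lemma~\ref{lem:proofsmooth}, so one may assume $\gamma \ne 0$.

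First I would differentiate the modulated energy in time and use the weak formulation of~\eqref{eq:pdelim} exactly as in~\eqref{eq:proofsmooth}, arriving at
\begin{equation*}
\partial_t \iint g_s(x-y)\,d(\mu_1^t-\mu_2^t)(x)d(\mu_1^t-\mu_2^t)(y) = -2\int_{\R^d} |\nabla(h_1^t-h_2^t)|^2 \mu_1^t - 2\int_{\R^d}\nabla h_2^t \cdot\nabla(h_1^t-h_2^t)(\mu_1^t-\mu_2^t).
\end{equation*}
The first (dissipative) term is nonpositive and may be discarded, so the task reduces to bounding the second term by a constant multiple of the modulated energy.

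The second step is a lift to the extended space: denoting still by $h_1^t$, $h_2^t$ the Caffarelli--Silvestre extensions introduced in Section~\ref{chap:caffsilv}, one has $-\Div(|\xi|^\gamma\nabla(h_1^t-h_2^t)) = (\mu_1^t-\mu_2^t)\delta_{\R^d\times\{0\}}$, so the last integral above rewrites as $-\int_{\R^d\times\R}\nabla_x h_2^t\cdot\nabla_x(h_1^t-h_2^t)\,\Div(|\xi|^\gamma\nabla(h_1^t-h_2^t))$. The weighted stress-energy tensor $T_{ij}:=|\xi|^\gamma(2\partial_i u \partial_j u - \delta_{ij}|\nabla u|^2)$, with $u := h_1^t-h_2^t$ and indices running over $\R^d\times\R$, satisfies the key identity $\partial_i T_{ij} = -2\partial_j u\,\Div(|\xi|^\gamma\nabla u)$ for every tangential $j\in\{1,\ldots,d\}$; indeed the weight depends only on $\xi$, so its derivatives do not contribute for tangential $j$. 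Multiplying by $\partial_{x_j}h_2^t$, summing on $j$, and integrating by parts in $\R^d\times\R$, the inequality $2|ab|\le a^2+b^2$ then yields, as in~\eqref{eq:ippCoul},
\begin{equation*}
\bigg|\int_{\R^d}\nabla h_2^t\cdot\nabla(h_1^t-h_2^t)(\mu_1^t-\mu_2^t)\bigg| \lesssim \|\nabla^2 h_2^t\|_{\Ld^\infty} \int_{\R^d\times\R}|\xi|^\gamma|\nabla(h_1^t-h_2^t)|^2,
\end{equation*}
and one last integration by parts identifies the right-hand side with $\int_{\R^d}(h_1^t-h_2^t)(\mu_1^t-\mu_2^t)dx = \iint g_s\,d(\mu_1^t-\mu_2^t)d(\mu_1^t-\mu_2^t)$, allowing to conclude by Gronwall.

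The main obstacle I anticipate is the rigorous justification of these integration by parts on $\R^d\times\R$ with the possibly degenerate or singular weight $|\xi|^\gamma$ (singular at $\xi=0$ when $s<d-1$), and the exclusion of boundary contributions at infinity and near $\{\xi=0\}$. This should follow from the weighted energy estimate $\nabla(h_1-h_2)\in\Ld^\infty(\R^+;\Ld^2(\R^d\times\R,|\xi|^\gamma dxd\xi))$ provided by Proposition~\ref{prop:existence}(ii), together with the additional $\Ld^\infty$-bounds on $\mu_1$, $\mu_2$ and the $\Ld^1_t\Ld^\infty_x$-bound on $\nabla^2 h_2$, via a standard truncation argument (cut-offs in $x$ at infinity and in $|\xi|$ near $0$). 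A further subtlety is that the test field $\nabla_x h_2^t$, once extended to $\R^d\times\R$, has mixed derivatives involving $\partial_\xi$, but these are controlled in $\Ld^\infty$ by $\|\nabla^2 h_2^t\|_{\Ld^\infty}$ through the standard estimates for the harmonic-type extension. The logarithmic case $s=0$, $d=1$ is treated identically with $\gamma=0$ and $g_0(x,\xi) = -c_{1,0}^{-1}\log|(x,\xi)|$.
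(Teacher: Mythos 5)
Your proposal takes essentially the same approach as the paper's proof: compute $\partial_t$ of the energy, discard the dissipative term, lift to $\R^d\times\R$ via the Caffarelli--Silvestre extension, use the weighted Delort-type stress-energy identity $\sum_{l}\partial_l\bigl(|\xi|^\gamma(2\partial_k u\,\partial_l u-\delta_{kl}|\nabla u|^2)\bigr)=2\partial_k u\,\Div(|\xi|^\gamma\nabla u)$ for tangential $k$, integrate by parts against $\nabla_x h_2^t$, apply $2|ab|\le a^2+b^2$, and conclude by Gronwall. The only slip is a sign in your divergence identity (it should be $+2\partial_j u\,\Div(|\xi|^\gamma\nabla u)$, not $-2$), which is harmless since the bound uses absolute values, and the technical points you flag (justification of integration by parts via Proposition~\ref{prop:existence}(ii), $\Ld^\infty$-control of the mixed extended derivatives of $h_2^t$) are exactly those the paper relies on.
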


\begin{proof}
By Lemma~\ref{lem:proofsmooth}, we only need to consider the case $d-2<s<d$, $s\ge0$. Proposition~\ref{prop:existence}(ii) below yields $\nabla (h_1 -h_2)\in\Ld^\infty(\R^+;\Ld^2(\R^d\times\R,|\xi|^\gamma dxd\xi))$. Combining this with the boundedness assumptions, all integration by parts arguments in the sequel may be justified.

Just as in~\eqref{eq:proofsmooth}, the time derivative of the left-hand side of~\eqref{eq:proofsmooth20} is
\begin{align}
&\partial_t\int_{\R^d}\int_{\R^d} g_{s}(x-y)d(\mu_1^t-\mu_2^t)(x)d(\mu_1^t-\mu_2^t)(y)\label{eq:proofsmooth2}\\
=~&-2\int_{\R^d} |\nabla (h_1^t-h_2^t)|^2\mu_1^t-2\int_{\R^d} \nabla h_2^t\cdot \nabla (h_1^t-h_2^t)\, (\mu_1^t- \mu_2^t).\nonumber
\end{align}
The first term in the right-hand side is nonpositive, so it suffices to estimate the second one. Using the relations $-\Div(|\xi|^\gamma\nabla h_i^t)=\mu_i^t\delta_{\R^d\times\{0\}}$, for $i=1,2$, we find the following proxy for the Delort-type formula~\eqref{eq:delortform}: for all $1\le k\le d$,
\begin{align*}
-2\partial_k (h_1^t-h_2^t)\, (\mu_1^t\delta_{\R^d\times\{0\}}- \mu_2^t\delta_{\R^d\times\{0\}})&=2\partial_k (h_1^t-h_2^t)\,\Div(|\xi|^\gamma\nabla (h_1^t-h_2^t))\\
&=\sum_{l=1}^{d+1}\partial_l\left(2|\xi|^\gamma\partial_k(h_1^t-h_2^t)\partial_l(h_1^t-h_2^t)-\delta_{kl}|\xi|^\gamma|\nabla (h_1^t-h_2^t)|^2\right).
\end{align*}
Combining this with an integration by parts, we obtain
\begin{align*}
2\int_{\R^d} \nabla h_2^t\cdot \nabla (h_1^t-h_2^t)\, (\mu_1^t- \mu_2^t)&=-\sum_{k=1}^d\sum_{l=1}^{d+1}\int_{\R^d\times\R} \partial_l\left(2|\xi|^\gamma\partial_k(h_1^t-h_2^t)\partial_l(h_1^t-h_2^t)-\delta_{kl}|\xi|^\gamma|\nabla (h_1^t-h_2^t)|^2\right)\partial_k h_2^t\\
&=\sum_{k=1}^d\sum_{l=1}^{d+1}\int_{\R^d\times\R} |\xi|^\gamma\left(2\partial_k(h_1^t-h_2^t)\partial_l(h_1^t-h_2^t)-\delta_{kl}|\nabla (h_1^t-h_2^t)|^2\right)\partial_{kl} h_2^t.
\end{align*}
Hence, arguing as in Lemma~\ref{lem:proofsmooth}, an integration by parts yields
\begin{align}
\left|\int_{\R^d} \nabla h_2^t\cdot \nabla (h_1^t-h_2^t)\, (\mu_1^t- \mu_2^t)\right|&\lesssim\|\nabla^2h_2^t\|_{\Ld^\infty}\int_{\R^d\times\R} |\xi|^\gamma|\nabla(h_1^t-h_2^t)|^2\nonumber\\
&=\|\nabla^2h_2^t\|_{\Ld^\infty}\int_{\R^d\times\R} (h_1^t-h_2^t)(\mu_1^t\delta_{\R^d\times\{0\}}-\mu_2^t\delta_{\R^d\times\{0\}})\nonumber\\
&=\|\nabla^2h_2^t\|_{\Ld^\infty}\int_{\R^d} (h_1^t-h_2^t)(\mu_1^t-\mu_2^t)\nonumber\\
&=\|\nabla^2h_2^t\|_{\Ld^\infty}\int_{\R^d}\int_{\R^d} g_s(x-y)d(\mu_1^t-\mu_2^t)(x)d(\mu_1^t-\mu_2^t)(y),\label{eq:ippRiesz}
\end{align}
and the result~\eqref{eq:proofsmooth20} follows.
\end{proof}

\subsection{Properties of the fractional porous medium equation}\label{chap:propeqn}

Let us first state for weak solutions to~\eqref{eq:pdelim} an existence result as well as some basic properties. We refer to~\cite{Caffarelli-Vazquez-11,Caffarelli-Soria-Vasquez-13} for $d-2<s<d$, $s\ge0$, and to~\cite{Lin-Zhang-00,Ambrosio-Serfaty-08,Serfaty-Vasquez-14,Bertozzi-Laurent-Leger-12} for $s=d-2$, $d\ge2$. For $0\le s<d-2$, we refer to~\cite[Section~4]{Carrillo-Choi-Hauray-14}, but there existence is proven only for short times, while in the present repulsive context it can easily be extended to all times, using that no blow-up can occur in finite time.\footnote{This follows from the observation that for $0\le s< d-2$ we have $\Delta g_s(x)=-s(d-2-s)c_{d,s}^{-1}|x|^{-s-2}$, and hence for all $p\ge1$ we find (formally) along solutions
\[\partial_t\int(\mu^t)^p=(p-1)\int (\mu^t)^{p} \Delta h^t\le0.\]}

\begin{prop}[Existence for the fractional porous medium equation]\label{prop:existence}
Let $0\le s<d$.
\begin{enumerate}[(i)]
\item Existence: Let $\mu^\circ\in\Pc(\R^d)\cap\Ld^\infty(\R^d)$, and in the case $d-2<s<d$, $s\ge0$ also assume that $|\mu^\circ(x)|\le Ae^{-a|x|}$ for some $a,A>0$. Then, there exists a (global) weak solution $\mu^t$ to~\eqref{eq:pdelim} in $\Ld^\infty(\R^+;\Pc(\R^d)\cap\Ld^\infty(\R^d))$, which is unique in this class in the case $0\le s\le d-2$, $d\ge2$.
\item General properties: Let $\mu^\circ\in\Pc(\R^d)\cap\Ld^\infty(\R^d)$, and in the case $s=0$ also assume $\int_{\R^d}\log(2+|x|)|\mu^\circ(x)|<\infty$. Any weak solution $\mu^t$ to~\eqref{eq:pdelim} on $\R^+\times\R^d$ with initial condition $\mu^\circ$ then satisfies
\[\int_{\R^d}\int_{\R^d} g_s(x-y)d\mu^t(x)d\mu^t(y)\le \int_{\R^d}\int_{\R^d} g_s(x-y)d\mu^\circ(x)d\mu^\circ(y),\]
where the left-hand side remains finite. Moreover, for all $t\ge0$,
\[\int_{\R^d}\int_{\R^d} g_s(x-y)d(\mu^t-\mu^\circ)(x)d(\mu^t-\mu^\circ)(y)=\begin{cases}\int_{\R^d} |\nabla (h^t-h^\circ)|^2,&\text{if $s=d-2$, $d\ge2$};\\
\int_{\R^d\times\R} |\xi|^\gamma|\nabla (h^t-h^\circ)|^2,&\text{if $d-2<s<d$, $s\ge0$;}\end{cases}\]
where both sides remain finite.
Also, if $\mu^\circ$ is compactly supported, then $\mu^t$ remains compactly supported for all $t\ge0$.
%\item Propagation of regularity: If 
\end{enumerate}
\end{prop}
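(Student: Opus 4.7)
Existence and (where applicable) uniqueness in each subrange follow directly from the works cited just before the statement, except that \cite{Carrillo-Choi-Hauray-14} only yields short-time existence when $0\le s<d-2$. The plan for extending to global time is to exploit the key observation recorded in the footnote: for $s<d-2$ one has $-\Delta g_s\ge 0$, hence $-\Delta h^t=(-\Delta g_s)\ast\mu^t\ge 0$, and every $\Ld^p$-norm of a solution is nonincreasing, since an integration by parts on a sufficiently regular solution yields
\[\partial_t\int (\mu^t)^p\,dx=-(p-1)\int (\mu^t)^p(-\Delta h^t)\,dx\le 0.\]
Letting $p\uparrow\infty$ propagates the $\Ld^\infty$-bound, so the short-time solutions of \cite{Carrillo-Choi-Hauray-14} can be concatenated into a global one in $\Ld^\infty(\R^+;\Pc(\R^d)\cap\Ld^\infty(\R^d))$.

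\textbf{Part (ii).} First I would check finiteness of $\iint g_s(x-y)d\mu^t(x)d\mu^t(y)$ for $\mu^t\in\Pc(\R^d)\cap\Ld^\infty(\R^d)$ by splitting at $|x-y|=1$: the near-diagonal piece is controlled by $\|\mu^t\|_{\Ld^\infty}$ (since $g_s\mathbf{1}_{|\cdot|\le 1}$ is integrable for $s<d$), while the far piece is bounded by the total mass (for $s>0$) or by the $\log(2+|x|)$-moment (for $s=0$, the latter being propagated in time directly from the equation). The energy inequality then follows from the formal computation
\[\partial_t\iint g_s(x-y)d\mu^t(x)d\mu^t(y)=2\int h^t\,\partial_t\mu^t\,dx=-2\int|\nabla h^t|^2\,d\mu^t\le 0.\]
For the Dirichlet-type identity, writing $\nu=\mu^t-\mu^\circ$ and $h^\nu=g_s\ast\nu$, I would start from the elementary rewriting
\[\iint g_s(x-y)d\nu(x)d\nu(y)=\int h^\nu\,d\nu\]
and integrate by parts using $-\Delta h^\nu=\nu$ in the Coulomb case, resp.\ the extension identity $-\Div(|\xi|^\gamma\nabla h^\nu)=\nu\delta_{\R^d\times\{0\}}$ of Section~\ref{chap:caffsilv} in the general Riesz case, to recognize $\int|\nabla h^\nu|^2$, resp.\ $\int|\xi|^\gamma|\nabla h^\nu|^2$. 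Preservation of compact support will follow from a standard transport argument: on a solution with $\mu^t\in\Pc\cap\Ld^\infty$ the velocity $-\nabla h^t=-\nabla g_s\ast\mu^t$ is bounded uniformly in $t$, so the support has finite propagation speed and cannot reach infinity in finite time.

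\textbf{Main obstacle.} The subtle point will be to rigorously justify the integration-by-parts identities on a generic weak solution, particularly in the non-Coulomb case, where one must verify that $\nabla h^\nu$ lies in $\Ld^2(\R^d\times\R,|\xi|^\gamma dxd\xi)$ and has adequate decay to discard boundary terms. I would handle this by approximating $\mu^\circ$ and $\mu^t$ by smooth, compactly supported mollifications $\nu_\e$, performing the computations on $\nu_\e$ where no issue arises, and passing to the limit using the uniform $\Ld^\infty$ bound from part~(i), the lower semicontinuity of the weighted Dirichlet energy, and the strong convergence $g_s\ast\nu_\e\to g_s\ast\nu$ in $W^{1,1}_\loc(\R^d)$.
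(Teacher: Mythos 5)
The paper supplies no proof of this proposition beyond the sentence preceding it (citing \cite{Caffarelli-Vazquez-11,Caffarelli-Soria-Vasquez-13,Lin-Zhang-00,Ambrosio-Serfaty-08,Serfaty-Vasquez-14,Bertozzi-Laurent-Leger-12,Carrillo-Choi-Hauray-14}) together with the footnote giving the formal $\Ld^p$-monotonicity computation used to upgrade the short-time existence of \cite{Carrillo-Choi-Hauray-14} to global existence when $0\le s<d-2$. Your treatment of part~(i) reproduces exactly that footnote argument, with the same sign computation $\partial_t\int(\mu^t)^p=(p-1)\int(\mu^t)^p\Delta h^t\le0$, so there is essentially nothing to compare there. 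Your sketch of the energy monotonicity and of the Dirichlet-type identity in part~(ii) is consistent with the elementary manipulations used elsewhere in the paper (e.g.\ in the proofs of Lemmas~\ref{lem:proofsmooth} and~\ref{lem:proofsmooth2}), and the mollification strategy you propose to justify the integration by parts is a reasonable route, although as you yourself flag the lower-semicontinuity step a priori only delivers one inequality and would need to be supplemented to get equality.

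The genuine gap is in your argument for preservation of compact support. You claim that for $\mu^t\in\Pc(\R^d)\cap\Ld^\infty(\R^d)$ the velocity $-\nabla h^t=-\nabla g_s\ast\mu^t$ is bounded, and conclude finite speed of propagation. But $|\nabla g_s(z)|\simeq|z|^{-s-1}$, and the near-diagonal piece $\|\mu^t\|_{\Ld^\infty}\int_{|z|\le1}|z|^{-s-1}dz$ converges only when $s<d-1$. For $d-1\le s<d$ (a range included in the statement, and which contains the cases $s=0,\,d=1$ and $s=1,\,d=2$ at the edge of the regime of Theorem~\ref{th:mfl}), the drift need not be bounded when one only knows $\mu^t\in\Pc\cap\Ld^\infty$, so the transport argument does not close. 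Finite speed of propagation for the fractional porous medium equation in that range is a nontrivial result of \cite{Caffarelli-Vazquez-11,Caffarelli-Soria-Vasquez-13}, and it does not reduce to an $\Ld^\infty$ bound on the velocity field; you would need either to invoke those results directly (as the paper implicitly does by citing them) or to supply a substantially different proof of finite propagation speed for $s\ge d-1$.
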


In the case $d-2<s<d$, $s\ge0$, uniqueness remains an open problem:
it has been obtained in dimension~$1$ by~\cite{Biler-Karch-Monneau-10} (integrating the equation with respect to $x$ and then considering viscosity solutions), but in higher dimensions no result is known (cf. \cite{Caffarelli-Vazquez-11,Caffarelli-Soria-Vasquez-13}).
Nevertheless, as a consequence of the stability result of Lemma~\ref{lem:proofsmooth2}, we easily find that uniqueness of bounded weak solutions always follows from the existence of a smoother solution, so the problem is somehow reduced to a regularity question:

\begin{cor}[Weak-strong uniqueness for the fractional porous medium equation]\label{cor:uniqueness}
Let $0\vee(d-2)\le s<d$. Let $\mu^\circ\in\Pc(\R^d)\cap\Ld^\infty(\R^d)$, and in the case $s=0$ also assume $\int_{\R^d}\log(2+|x|)|\mu^\circ(x)|dx<\infty$. Assume that equation~\eqref{eq:pdelim} admits a weak solution $\mu^t$ with initial condition $\mu^\circ$, such that $\mu,\nabla^2h\in\Ld^\infty([0,T];\Ld^\infty(\R^d))$ for some $T>0$. Then, $\mu$ is the unique weak solution to~\eqref{eq:pdelim} up to time $T$ in the class $\Ld^\infty([0,T];\Ld^\infty(\R^d))$.
\end{cor}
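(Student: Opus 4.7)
The plan is to apply the stability estimate of Lemma~\ref{lem:proofsmooth2} directly to any two weak solutions sharing the common initial datum $\mu^\circ$. Given a second weak solution $\tilde\mu\in \Ld^\infty([0,T];\Ld^\infty(\R^d))$ of~\eqref{eq:pdelim} with initial condition $\mu^\circ$, I would set $\tilde h^t:=g_s\ast\tilde\mu^t$ and apply the lemma with $\mu_1=\tilde\mu$ and $\mu_2=\mu$. Its hypotheses hold for free: both $\tilde\mu,\mu\in\Ld^\infty([0,T];\Ld^\infty(\R^d))$ by assumption; $\nabla^2h_2=\nabla^2h\in\Ld^\infty([0,T];\Ld^\infty(\R^d))\subset \Ld^1([0,T];\Ld^\infty(\R^d))$ on the finite horizon $T<\infty$; and in the case $s=0$ the logarithmic-moment hypothesis is shared through the common $\mu^\circ$. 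Since the initial modulated energy $\iint g_s(x-y)\,d(\mu^\circ-\mu^\circ)(x)d(\mu^\circ-\mu^\circ)(y)$ vanishes identically, the stability bound~\eqref{eq:proofsmooth20} collapses to
\[
\int_{\R^d}\int_{\R^d} g_s(x-y)\,d(\tilde\mu^t-\mu^t)(x)\,d(\tilde\mu^t-\mu^t)(y)\le 0,\qquad t\in[0,T].
\]

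The remaining step is to upgrade this inequality to the pointwise identity $\tilde\mu^t=\mu^t$ by exploiting that the Riesz bilinear form is positive semi-definite and non-degenerate on the relevant class of signed measures. Applied to $\nu:=\tilde\mu^t-\mu^t$ (which has zero total mass and, in the case $s=0$, finite logarithmic moment inherited from $\mu^\circ$ through mass conservation), the Caffarelli--Silvestre extension identity already used in Proposition~\ref{prop:existence}(ii) rewrites the form as
\[
\int_{\R^d}\int_{\R^d} g_s(x-y)\,d\nu(x)\,d\nu(y)\;=\;\int_{\R^d\times\R}|\xi|^\gamma |\nabla(\tilde h^t-h^t)|^2\;\ge\;0
\]
(with the unweighted integral over $\R^d$ in the Coulomb case $s=d-2$, $d\ge2$, where no extension is needed). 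Combining with the previous inequality forces $\nabla(\tilde h^t-h^t)\equiv 0$ almost everywhere on the extended space; together with the decay at infinity of $\tilde h^t-h^t$, this yields $\tilde h^t=h^t$, and therefore $\tilde\mu^t=\mu^t$.

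I do not anticipate any real obstacle: the corollary is essentially a direct rereading of Lemma~\ref{lem:proofsmooth2} together with the positivity of the Riesz energy. The only point deserving a word of care is the non-degeneracy of the quadratic form in the case $s=0$, where the logarithmic kernel is not positive definite on arbitrary finite signed measures; the log-moment hypothesis is precisely what ensures this, handled exactly as in the identity of Proposition~\ref{prop:existence}(ii).
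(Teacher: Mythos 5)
Your proposal is correct and follows essentially the same route as the paper: apply Lemma~\ref{lem:proofsmooth2} to the two solutions sharing the initial datum $\mu^\circ$ so that the right-hand side of~\eqref{eq:proofsmooth20} vanishes, then invoke the Caffarelli--Silvestre extension identity from Proposition~\ref{prop:existence}(ii) to recognize the resulting nonpositive quantity as $\int_{\R^d\times\R}|\xi|^\gamma|\nabla(\tilde h^t-h^t)|^2$, forcing the gradients to coincide and hence the measures to coincide upon applying $-\Div(|\xi|^\gamma\cdot)$. The only cosmetic difference is that the paper concludes directly from $\nabla g_s\ast\mu^t=\nabla g_s\ast\nu^t$ by applying the divergence operator, without passing through a decay-at-infinity argument for $\tilde h^t-h^t$.
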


\begin{proof}
Let $\mu^t$ be a weak solution to~\eqref{eq:pdelim} as in the statement, and let $\nu^t$ denote another weak solution to~\eqref{eq:pdelim} in $\Ld^\infty([0,T];\Ld^\infty(\R^d))$. By Lemma~\ref{lem:proofsmooth2}, we may then conclude
\begin{align}\label{eq:uniquenessprinter}
\int_{\R^d}\int_{\R^d} g_s(x-y)d(\mu^t-\nu^t)(x)d(\mu^t-\nu^t)(y)\le0,
\end{align}
for all $t\in[0,T]$. For $d-2<s<d$, $s\ge0$, Proposition~\ref{prop:existence}(ii)
%allows to rewrite~\eqref{eq:uniquenessprinter} as follows:
gives $\nabla g_s\ast(\mu^t-\nu^t)\in\Ld^2(\R^d,|\xi|^\gamma dxd\xi)$, so that~\eqref{eq:uniquenessprinter} becomes by integration by parts
\[\int_{\R^d\times\R} |\xi|^\gamma |\nabla g_s\ast (\mu^t-\nu^t)|^2\le0.\]
This proves $\nabla g_s\ast \mu^t=\nabla g_s\ast\nu^t$, and hence, applying the operator $-\Div(|\xi|^\gamma\cdot)$ to both sides, $\mu^t=\nu^t$ for all $t\in[0,T]$. We may argue similarly in the Coulomb case $s=d-2$, $d\ge2$.
\end{proof}

As the following lemma shows, the required boundedness of $\nabla^2h^t$ is implied by a sufficient amount of Hölder regularity for $\mu^t$.

\begin{lem}\label{lem:hbounded}
Let $0\vee(d-2)\le s<d$. Let $\mu\in\Pc(\R^d)\cap C^\sigma(\R^d)$ for some $\sigma>2-d+s$, and denote by $h^\mu:=g_s\ast \mu$ the associated Riesz potential~\eqref{eq:defriesz} on $\R^d$.
%We further assume $\int_{\R^d}\log(1+|x|)|\mu(x)|dx<\infty$ if $s=0$, and
%Assume that equation~\eqref{eq:pdelim} admits a weak solution $\mu^t$ with initial condition $\mu^\circ\in\Pc(\R^d)\cap\Ld^\infty(\R^d)$, such that $\mu\in\Ld^\infty([0,T];\Pc(\R^d)\cap C^{\sigma}(\R^d))$ for some $\sigma>2-d+s$, and for some $T>0$.
If $s=d-1$, we further assume $\nabla\mu\in \Ld^{p_0}(\R^d)$ for some $p_0<\infty$. Then, we have
\begin{align}
\|(\nabla h^\mu,\nabla^2h^\mu)\|_{\Ld^\infty}&\lesssim \|\mu\|_{\Ld^1}+\|\mu\|_{C^\sigma}.\label{eq:boundht-1}
\end{align}
%and also
%\begin{align}
%\|(\nabla_\xi h^\mu,\nabla_x\nabla_\xi h^\mu)(\cdot,\xi)\|_{\Ld^\infty}&\lesssim\begin{cases}
%\|\mu\|_{\Ld^1}+\|\mu\|_{C^\sigma},&\text{if $0\vee(d-2)\le s<d-1$};\\(1+\log^-(|\xi|))\|\mu\|_{C^1},&\text{if $s=d-1\ge0$;}\\|\xi|^{-\gamma}\|\mu\|_{C^1},&\text{if $d-1<s<d$;}
%\end{cases}\label{eq:boundht-2}
%\end{align}
%where $\log^-(t):=0\vee(-\log t)$.
Moreover, if $s=d-2\ge0$ we have $\|\nabla^2 h^\mu\|_{\Ld^p}\lesssim_p\|\mu\|_{\Ld^p}$ for all $1<p<\infty$, and if $s=d-1$ we have $\|\nabla^2 h^\mu\|_{\Ld^p}\lesssim_p\|\nabla\mu\|_{\Ld^p}$ for all $p_0\le p<\infty$, $p>1$.
\end{lem}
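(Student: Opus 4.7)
My plan is to combine a near-field/far-field splitting of the convolution $h^\mu=g_s\ast\mu$ with the H\"older regularity of $\mu$ (to tame the singularity of $\nabla^2 g_s$ at the origin), together with classical Calder\'on-Zygmund theory for the $\Ld^p$ statements.

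I would start with the two $\Ld^p$ bounds. In the Coulomb case $s=d-2\ge 0$, the choice of normalization gives $-\Delta g_{d-2}=\delta_0$, hence $-\Delta h^\mu=\mu$, so each entry $\partial_{ij} h^\mu$ is a second-order Riesz transform of $\mu$, which is bounded on $\Ld^p$ for $1<p<\infty$ by classical Calder\'on-Zygmund theory. In the case $s=d-1$, the hypothesis $\sigma>2-d+s=1$ ensures $\mu\in C^1$, so I integrate by parts once to write $\partial_{ij} h^\mu=(\partial_j g_s)\ast(\partial_i\mu)$; the kernel $\partial_j g_s(z)\propto z_j/|z|^{d+1}$ is a Calder\'on-Zygmund kernel (homogeneous of degree $-d$, with vanishing spherical mean), yielding $\|\nabla^2 h^\mu\|_{\Ld^p}\lesssim_p\|\nabla\mu\|_{\Ld^p}$ for $1<p<\infty$.

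For the $\Ld^\infty$ estimates I split the convolution at unit distance. The gradient $\nabla h^\mu$ has kernel $|\nabla g_s(z)|\lesssim|z|^{-s-1}$, locally integrable when $s<d-1$, so the near-field contributes $\lesssim\|\mu\|_{\Ld^\infty}$ and the far-field $\lesssim\|\mu\|_{\Ld^1}$; in the borderline case $s=d-1$ I integrate by parts to write $\nabla h^\mu=g_s\ast\nabla\mu$ with $g_s$ locally integrable, and estimate via $\nabla\mu\in\Ld^{p_0}$ together with the H\"older bound on $\nabla\mu$. For the Hessian, the far-field $\int_{|y-x|>1}\partial_{ij} g_s(y-x)\mu(y)\,dy$ is immediate since $|\nabla^2 g_s|$ is bounded on $\{|z|>1\}$, giving $\lesssim\|\mu\|_{\Ld^1}$. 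On the near-field, where the kernel $|z|^{-s-2}$ fails to be locally integrable for $s\ge d-2$, I exploit H\"older cancellation:
\[
\int_{|z|\le1}\partial_{ij} g_s(z)\,\mu(x-z)\,dz=\int_{|z|\le1}\partial_{ij} g_s(z)\big(\mu(x-z)-\mu(x)\big)\,dz+\mu(x)\,c_{ij},
\]
where the first integral is absolutely convergent since $|\mu(x-z)-\mu(x)|\le\|\mu\|_{C^\sigma}|z|^\sigma$ produces an integrand bounded by $\|\mu\|_{C^\sigma}|z|^{\sigma-s-2}$, integrable over $B_1$ precisely under the hypothesis $\sigma>2-d+s$.

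The main technical obstacle is interpreting the constant $c_{ij}$ when $s>d-2$: the formal principal value $\pv\int_{B_1}\partial_{ij} g_s$ diverges, since the divergence theorem produces a boundary term on $\partial B_\varepsilon$ of order $\varepsilon^{d-s-2}\to\infty$. The resolution is that $\nabla^2 h^\mu$ must be viewed as the distributional derivative of $\nabla h^\mu=\nabla g_s\ast\mu$ (itself a classical function for $s<d-1$, and reached via integration by parts when $s=d-1$), rather than as a naive convolution with $\nabla^2 g_s$; the divergent constant is absorbed by the regularization implicit in taking that derivative. Concretely I would run the above H\"older decomposition on the finite difference $\nabla h^\mu(x+t)-\nabla h^\mu(x)$, showing it is bounded by $C(\|\mu\|_{\Ld^1}+\|\mu\|_{C^\sigma})|t|$ and thus obtaining the desired uniform Lipschitz bound on $\nabla h^\mu$. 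Equivalently, one may invoke the Schauder-type estimate $I_{d-s}:C^\sigma(\R^d)\to C^{\sigma+d-s}_\loc(\R^d)$ for the Riesz potential, valid for $\sigma+d-s\notin\Z$, which is satisfied (up to arbitrarily small perturbation of $\sigma$) by our hypothesis.
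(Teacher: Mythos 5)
Your plan overlaps substantially with the paper's (the $\Ld^p$ statements via Riesz transforms/Calder\'on--Zygmund are identical, and both proofs use a cutoff at unit scale plus the H\"older cancellation $\mu(\cdot)-\mu(x)$), but you handle the near-field singularity by a genuinely different device, and that device has a step you have not verified. The paper never writes $(\nabla^2 g_s)\ast\mu$ directly: after approximating $\mu\in C^\infty_c$, it starts from $g_s\ast\nabla^2\mu$, splits with a \emph{smooth} cutoff $\chi$, and integrates by parts; the cutoff derivatives land in the annulus $1\le|z|\le2$ (where everything is tame), so no principal value and no ``$c_{ij}$'' ever appear. You instead use a sharp cutoff and meet the divergent constant, which you propose to absorb either through Schauder theory or by estimating the finite difference $\nabla h^\mu(x+t)-\nabla h^\mu(x)$. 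The finite-difference route can be made to work, but the delicate point is exactly the piece you wave at: after subtracting $\mu(x)$ on the near field you must show
\begin{align*}
\mu(x)\int_{|z|\le1}\bigl(\nabla g_s(z+t)-\nabla g_s(z)\bigr)\,dz=O(|t|),
\end{align*}
which requires using the oddness of $\nabla g_s$ (so that $\int_{|z|\le1}\nabla g_s=0$) together with the observation that $\int_{B(t,1)}\nabla g_s$ differs from $\int_{B(0,1)}\nabla g_s$ only over two thin lens-shaped regions near the unit sphere, where $|\nabla g_s|\lesssim1$ and the volume is $O(|t|)$. Without this cancellation argument the constant term is not controlled, and a careless split (subtracting $\mu(x)$ only on $3|t|<|z|\le1$, say) produces a contribution $O(|t|^{d-s-1})$, which fails to be $O(|t|)$ precisely in the regime $d-2<s<d-1$. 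You should also note that for $s=d-1$ your $\Ld^\infty$ bound on $\nabla^2 h^\mu$ needs a second application of the same cancellation on $\nabla\mu$ (with $|z|^{-d}$ kernel), where fortunately the constant vanishes by oddness; the paper achieves the same effect via the symmetrization $\nabla\mu(y)-\nabla\mu(2x-y)$. In short: the plan is sound and parallel to the paper's in spirit, but the paper's smooth-cutoff integration by parts sidesteps the divergent constant for free, whereas your sharp cutoff forces a nontrivial cancellation estimate that your sketch leaves unproved.
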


\begin{proof}
%The boundedness result follows from standard elliptic theory.
Without loss of generality we may assume $\mu\in C^\infty_c(\R^d)$, as the claimed result then follows by an obvious approximation argument.
We first prove that for any $\mu\in C^\infty_c(\R^d)$ the Riesz potential $h^\mu$ satisfies~\eqref{eq:boundht-1}.
%As the left-hand side is clearly maximized at $\xi=0$, it is enough to prove the result for $\xi=0$.
We only argue for the second gradient $\|\nabla^2h^\mu\|_{\Ld^\infty}$, the other part being similar and easier.
Let $\chi\in C^\infty_c(\R^d)$ be symmetric around $0$, with $\chi=1$ in $B_1$ and $\chi=0$ outside $B_2$. If $d-2\le s<d-1$, decomposing
\begin{align*}
\nabla^2h^\mu(x)&=\int_{\R^d} g_s(x-y)\nabla^2\mu(y)dy\\
&=\int_{\R^d} g_s(x-y)(1-\chi(x-y))\nabla^2\mu(y)dy+\int_{\R^d} g_s(x-y)\chi(x-y)\nabla^2_y(\mu(y)-\mu(x))dy,
\end{align*}
we find by multiple integrations by parts
\begin{align*}
\nabla^2h^\mu(x)&=\int_{\R^d}\nabla^2g_s(x-y)(1-\chi(x-y))\mu(y)dy+\int_{\R^d}\nabla^2 g_s(x-y)\chi(x-y)(\mu(y)-\mu(x))dy\\
&\qquad-\mu(x)\int_{\R^d} g_s(x-y)\nabla^2\chi(x-y)dy,
\end{align*}
and hence, for any $2-d+s<\sigma<1$,
\begin{align*}
|\nabla^2h^\mu(x)|&\lesssim \int_{|x-y|\ge1}|x-y|^{-s-2}|\mu(y)|dy+\|\mu\|_{C^\sigma}\int_{|x-y|\le2}|x-y|^{\sigma-s-2}dy+\|\mu\|_{\Ld^\infty}\int_{|x-y|\le2}|x-y|^{-s}dy\\
&\lesssim \|\mu\|_{\Ld^1}+\|\mu\|_{C^\sigma},
\end{align*}
that is~\eqref{eq:boundht-1}. If $d-1\le s<d$, rather decomposing
\begin{align*}
\nabla^2h^\mu(x)%&=\int_{\R^d} g_s(x-y)\nabla^2\mu(y)dy\\
&=\frac12\int_{\R^d} \nabla g_s(x-y)\otimes(\nabla\mu(y)-\nabla\mu(2x-y))dy\\
&=\int_{\R^d} \nabla^2 g_s(x-y)(1-\chi(x-y))\mu(y)dy-\int_{\R^d} \nabla g_s(x-y)\otimes\nabla\chi(x-y)\mu(y)dy\\
&\qquad+\frac12\int_{\R^d} \nabla g_s(x-y)\otimes(\nabla\mu(y)-\nabla\mu(2x-y))\chi(x-y)dy,
\end{align*}
the result~\eqref{eq:boundht-1} again follows from a direct computation. As far as the additional $\Ld^p$-boundedness is concerned, it is a direct consequence of the $\Ld^p$-boundedness of Riesz transforms for $1<p<\infty$, simply noting that we have $\nabla^2 h^\mu\simeq \nabla^2(-\Delta)^{-1}\mu$ for $s=d-2\ge0$, and $\nabla^2h^\mu\simeq \nabla(-\Delta)^{-1/2}\nabla\mu$ for $s=d-1$.
\end{proof}

Motivated by these considerations, we would like to prove at least that the regularity of the initial condition is conserved along the flow, so that the boundedness of $\nabla^2h^t$ would be ensured by the above lemma for sufficiently smooth initial data.
In the Coulomb case $s=d-2$, $d\ge2$, any weak solution $\mu^t$ to~\eqref{eq:pdelim} can be shown to belong to $\Ld^\infty(\R^+;\Pc(\R^d)\cap C^\sigma(\R^d))$ whenever $\mu^\circ\in \Pc(\R^d)\cap C^\sigma(\R^d)$ with non-integer $\sigma>0$ (cf.~\cite[Theorem~1]{Lin-Zhang-00}, which is easily rewritten in any dimension). By a similar but easier argument, the corresponding result can also be proven to hold in the case $0\le s<d-2$, $d\ge3$.
In the case $d-2<s\le d-1$, $s\ge0$, a recent result~\cite{Zhou-16} %(easily extended to dimension $d=1$)
%(together with the Besov embeddings $\Pc(\R^d)\cap W^{d+u,1}(\R^d)\subset B_{1,\infty}^{d+u}(\R^d)\subset B_{\infty,\infty}^{u}(\R^d)$, see e.g. \cite{BCD-11})
shows that there exists a time $T>0$ (depending on initial data) such that any weak solution $\mu^t$ to~\eqref{eq:pdelim} belongs to $\Ld^\infty([0,T];H^\sigma(\R^d))$, whenever $\mu^\circ\in\Pc(\R^d)\cap H^\sigma(\R^d)$ with $\sigma>\frac d2+1$.
%However, the problem becomes of a very different nature when considering $d-2<s<d$, and it seems unclear to us how to prove such a regularity result in that case, even in short time.
In the case $d-1<s<d$, however, even such a short-time regularity result is unknown.
This is why we needed to add some regularity assumption in the statement of Theorem~\ref{th:mfl}.

\subsection{Modulated energy and elementary properties}\label{chap:modenergy}
Let $0\le s<d$, and let $\mu^\circ,\mu_N^\circ,\mu^t,\mu_N^t$ be as in the statement of Theorem~\ref{th:mfl}, for some $T\in(0,\infty)$. Let $N\ge1$. Since $\mu_N^\circ$ is assumed to have bounded energy, and since the energy is decreasing along the flow, we find
\[\sup_{t\in[0,T]}\frac1{N^2}H_N(x_{1,N}^t,\ldots,x_{N,N}^t)=\sup_{t\in[0,T]}\frac1{N^2}\sum_{i\ne j}^Ng_s(x_{i,N}^t-x_{j,N}^t)\le \frac1{N^2}\sum_{i\ne j}^Ng_s(x_{i,N}^\circ-x_{j,N}^\circ)<\infty.\]
If $0<s<d$, since $g_s$ is nonnegative, this proves
\begin{align}\label{eq:distancepoints}
\eta_N:=\min_{i\ne j}^N\inf_{t\in[0,T]}|x_{i,N}^t-x_{j,N}^t|>0.
\end{align}
If $s=0$, $g_0$ changes sign and some more work is then needed: noting that by symmetry
\[\partial_t\frac1N\sum_{i=1}^Nx_{i,N}^t=-\frac1{N^2}\sum_{i\ne j}^N\nabla g_0(x_{i,N}^t-x_{j,N}^t)=\frac1{N^2}\sum_{i\ne j}^N\frac{x_{i,N}^t-x_{j,N}^t}{|x_{i,N}^t-x_{j,N}^t|^2}=0,\]
a direct computation yields
\begin{align*}
\partial_t\frac1{N^2}\sum_{i\ne j}^N|x_{i,N}^t-x_{j,N}^t|^2&=\partial_t\frac2N\sum_{i=1}^N|x_{i,N}^t|^2-\partial_t\frac2{N^2}\bigg|\sum_{i=1}^Nx_{i,N}^t\bigg|^2\\
&=\frac4{N^2}\sum_{i\ne j}^Nx_{i,N}^t\cdot\frac{x_{i,N}^t-x_{j,N}^t}{|x_{i,N}^t-x_{j,N}^t|^2}\\
&=\frac2{N^2}\sum_{i\ne j}^N(x_{i,N}^t-x_{j,N}^t)\cdot\frac{x_{i,N}^t-x_{j,N}^t}{|x_{i,N}^t-x_{j,N}^t|^2}=\frac{2(N-1)}N,
\end{align*}
and hence
\begin{align*}
&\sup_{t\in[0,T]}\frac1{N^2}\sum_{i\ne j}^N(g_0(x_{i,N}^t-x_{j,N}^t)+c_{d,0}^{-1}|x_{i,N}^t-x_{j,N}^t|^2)\\
\le~& \frac1{N^2}\sum_{i\ne j}^Ng_0(x_{i,N}^\circ-x_{j,N}^\circ)+\frac{c_{d,0}^{-1}}{N^2}\sum_{i\ne j}^N|x_{i,N}^\circ-x_{j,N}^\circ|^2+Tc_{d,0}^{-1}\frac{2(N-1)}N<\infty.
\end{align*}
As $g_0(u)+c_{d,0}^{-1}u^2\ge0$ for all $u$, this proves that~\eqref{eq:distancepoints} also holds in the case $s=0$.

Next, we recall the truncation procedure introduced in~\cite{Petrache-Serfaty-14}, which serves to make energies finite without removing the diagonal.
For fixed $N\ge1$, let $\eta>0$ be small enough such that $2\eta<1\wedge\eta_N$. Then define
\[\mu_{N,\eta}^t:=\frac1N\sum_{i=1}^N\delta^{(\eta)}_{x_{i,N}^t}\in\Pc(\R^d),\]
where $\delta^{(\eta)}_z$ denotes the uniform unit Dirac mass on the sphere $\partial B(z,\eta)$.
Denote for simplicity
\[h^t:=g_s\ast\mu^t,\qquad h_N^t:=g_s\ast\mu_N^t,\qquad h_{N,\eta}^t:=g_s\ast\mu^t_{N,\eta},\]
and use the same notation for their extensions to $\R^d\times\R$ as in Section~\ref{chap:caffsilv}. Also define $g_{s,\eta}:= g_s(\eta)\wedge g_s$.
Noting that by symmetry
\[-\Div(|\xi|^\gamma\nabla g_{s,\eta})=\delta_0^{(\eta)}\delta_{\R^d\times\{0\}},\]
where $\delta_0^{(\eta)}\delta_{\R^d\times\{0\}}$ denotes the unit Dirac mass on $\partial B_\eta\times\{0\}$, we find
\[h_{N,\eta}^t(x,\xi)=\frac1N\sum_{i=1}^Ng_{s,\eta}(x-x_{i,N}^t,\xi).\]

Let us now introduce our notation for the small balls around the particle locations, which we will be crucially using in the proof: for all $R>0$, let $\B_N^t(R)$ denote a union of disjoint balls
\begin{align}\label{def:smallballs}
\B_N^t(R):=\bigcup_{m=1}^{M_N^t(R)}B(y_{m,N}^t,r_{m,N}^t),
\end{align}
with total radius $R=\sum_mr_{m,N}^t$ and such that $x_{i,N}^t\in\B_N^t(R)$ for all $1\le i\le N$. These balls will be carefully chosen in Section~\ref{chap:ball0} below.

As already announced, for all $N\ge1$, we will consider the following {\it modulated energy}
\begin{align}\label{eq:modnrj}
\Ec_N(t):=\iint_{D^c}g_s(x-y)d(\mu_N^t-\mu^t)(x)d(\mu_N^t-\mu^t)(y),
\end{align}
where $D:=\{(x,x):x\in\R^d\}$ denotes the diagonal. This quantity can be thought of as a natural renormalization of the $H^{-(d-s)/2}$-distance in the presence of Dirac masses.
%In the general case, it is still a natural way of measuring the distance between $\mu_N^t$ and $\mu^t$.
Its main property is as follows:

\begin{lem}[Modulated energy]\label{lem:propmodenergy}
%Assume that $(\mu_N^\circ)_N$ satisfies
%\begin{align}\label{eq:asbounded}
%\limsup_{N\uparrow\infty}\int_{\R^d}|x|^2d\mu_N^\circ(x)<\infty,\qquad\limsup_{N\uparrow\infty}\iint_{x\ne y}g_s(x-y)d\mu_N^\circ(x)d\mu_N^\circ(y)<\infty,
%\end{align}
%and let $(\mu_N^t)_N$ be defined by~\eqref{eq:vortexdyn}--\eqref{eq:empiricalmeas}.
For all $t\ge0$, if the sequence $(\mu_N^t)_N$ is tight, then the following two conditions are equivalent:
\begin{enumerate}[(i)]
\item $\limsup_{N\uparrow\infty}\Ec_N(t)\le0$;
\item $\mu_N^t\cvf*\mu^t$ and $\iint_{D^c}g_s(x-y)d\mu_N^t(x)d\mu_N^t(y)\to\int_{\R^d}\int_{\R^d} g_s(x-y)d\mu^t(x)d\mu^t(y)$.
\end{enumerate}
\end{lem}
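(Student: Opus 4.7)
The plan is to expand the modulated energy as
\[
\Ec_N(t) \;=\; \iint_{D^c} g_s \, d\mu_N^t\,d\mu_N^t \;-\; 2\int (g_s\ast\mu^t)\, d\mu_N^t \;+\; \iint g_s\, d\mu^t\,d\mu^t,
\]
where the absence of diagonal exclusions in the two rightmost pieces relies on $\mu^t$ being atomless. Denote these three pieces by $A_N(t), B_N(t), E(t)$. Under the regularity assumed on $\mu^t$ in Theorem~\ref{th:mfl}, $g_s\ast\mu^t$ is continuous and, for $s>0$, bounded. The direction (ii)$\Rightarrow$(i) is then immediate: tightness plus $\mu_N^t\cvf*\mu^t$ yields $B_N(t)\to E(t)$, and (ii) provides $A_N(t)\to E(t)$, so that $\Ec_N(t)\to 0$.

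For the main direction (i)$\Rightarrow$(ii), I would use tightness and Prokhorov to extract a subsequence along which $\mu_{N_k}^t\cvf*\nu$ for some probability measure $\nu$, and prove that $\nu=\mu^t$. The structural input is positive-definiteness of the Riesz kernel: via the Caffarelli--Silvestre extension of Section~\ref{chap:caffsilv}, we have $\iint g_s\,d\sigma\,d\sigma=\int|\xi|^\gamma|\nabla h^\sigma|^2\ge 0$ for any signed measure $\sigma$ of finite energy. To tame the diagonal singularity, I truncate to $g_s^\varepsilon:=g_s\wedge(1/\varepsilon)$, which for $s>0$ lies in $C_0(\R^d)$, so that weak-$*$ convergence $\mu_{N_k}^t\otimes\mu_{N_k}^t\cvf*\nu\otimes\nu$ gives
\[
\iint g_s^\varepsilon\, d\nu\,d\nu \;=\; \lim_k \iint g_s^\varepsilon\, d\mu_{N_k}^t\,d\mu_{N_k}^t \;\le\; \liminf_k\Bigl( A_{N_k}(t) + \tfrac{1}{N_k\varepsilon}\Bigr) \;=\; \liminf_k A_{N_k}(t),
\]
where I used that the diagonal of $\mu_{N_k}^t\otimes\mu_{N_k}^t$ has mass $1/N_k$ and $g_s^\varepsilon(0)=1/\varepsilon$. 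Monotone convergence as $\varepsilon\downarrow 0$ upgrades this to $\iint g_s\,d\nu\,d\nu\le\liminf_k A_{N_k}(t)$. Combined with $B_{N_k}(t)\to\iint g_s\,d\nu\,d\mu^t$ (from weak-$*$ convergence tested against the continuous bounded function $g_s\ast\mu^t$) and hypothesis (i), one deduces
\[
\iint g_s\, d(\nu-\mu^t)\, d(\nu-\mu^t) \;\le\; 0,
\]
and positive-definiteness of the quadratic form forces $\nu=\mu^t$. Since every subsequential limit coincides with $\mu^t$, the full sequence satisfies $\mu_N^t\cvf*\mu^t$, and running the chain of inequalities back then produces $A_N(t)\to E(t)$, giving (ii).

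The main obstacle I expect is the logarithmic case $s=0$, where $g_0$ is sign-changing so that $g_0^\varepsilon\notin C_0$ and $g_0\ast\mu^t$ is unbounded. I would circumvent this by replacing $g_0$ throughout by the nonnegative shifted kernel $\tilde g_0(x):=g_0(x)+c_{d,0}^{-1}|x|^2$ and running the same truncate-then-pass-to-the-limit argument with $\tilde g_0$ in place of $g_s$; the explicit quadratic correction passes to the limit separately thanks to the a priori second-moment bound on $\mu_N^t$ already derived in Section~\ref{chap:modenergy} together with the log-moment assumption propagated from $\mu^\circ$ to $\mu^t$, and can then be subtracted at the end to recover the conclusion for $g_0$.
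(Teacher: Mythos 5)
For $s>0$ your argument is essentially the paper's: extract a narrow limit $\nu$ by Prokhorov, prove the weak lower semicontinuity $\iint g_s\,d\nu\,d\nu\le\liminf_k A_{N_k}(t)$ by truncating the kernel and applying monotone convergence (the paper uses $K\wedge g_s$ in place of your $g_s\wedge(1/\varepsilon)$, same thing), pass to the limit in the cross term against the bounded continuous potential $g_s\ast\mu^t$, combine with hypothesis~(i) to get $\iint g_s\,d(\nu-\mu^t)\,d(\nu-\mu^t)\le0$, and invoke positive-definiteness of the Riesz kernel (the paper cites Lieb--Loss and Saff--Totik rather than the extension formula, but that is a matter of taste). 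Your closing remark that one recovers $A_N(t)\to E(t)$ by running the inequalities again once $\nu=\mu^t$ is identified is also what the paper does.

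Your treatment of $s=0$, however, has a genuine gap. The shift $\tilde g_0(x)=g_0(x)+c_{d,0}^{-1}|x|^2$ produces, after truncation and monotone convergence, the inequality
\begin{equation*}
\iint g_0\,d\nu\,d\nu+c_{d,0}^{-1}\iint|x-y|^2\,d\nu\,d\nu\;\le\;\liminf_k\Bigl(A_{N_k}(t)+c_{d,0}^{-1}\iint_{D^c}|x-y|^2\,d\mu_{N_k}^t\,d\mu_{N_k}^t\Bigr),
\end{equation*}
and to extract the $g_0$ piece you must cancel the quadratic terms. A bound on the second-moment functional does not suffice: you would need the actual convergence $\iint_{D^c}|x-y|^2\,d\mu_{N_k}^t\,d\mu_{N_k}^t\to\iint|x-y|^2\,d\nu\,d\nu$, whereas lower semicontinuity (Fatou) only gives the inequality $\ge$, which is the wrong direction for cancellation. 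Worse, the ``a priori second-moment bound from Section~2.4'' that you invoke is the identity $\partial_t\frac1{N^2}\sum_{i\ne j}|x_{i,N}^t-x_{j,N}^t|^2=\frac{2(N-1)}{N}$, which controls the second moment for each fixed $N$ in terms of its initial value $\frac1{N^2}\sum_{i\ne j}|x_{i,N}^\circ-x_{j,N}^\circ|^2$; nothing in the hypotheses (weak-$*$ convergence of $\mu_N^\circ$, tightness, convergence of the $g_0$-energy, log-moment of $\mu^\circ$) prevents this initial quantity from diverging as $N\uparrow\infty$. Nor can you sidestep the subtraction by carrying $\tilde g_0$ to the positive-definiteness step: for a zero-mass signed measure $\sigma=\nu-\mu^t$ one has $\iint|x-y|^2\,d\sigma\,d\sigma=-2\bigl|\int x\,d\sigma\bigr|^2\le0$, so $\tilde g_0$ is \emph{not} a positive-definite kernel on that class, and $\iint\tilde g_0\,d\sigma\,d\sigma\le0$ does not force $\sigma=0$. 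The cleaner route is the paper's: truncate $g_0$ itself by $K\wedge g_0$; the bounded part then passes by narrow convergence, and the negative part $(g_0)^-$ is dealt with separately via a uniform integrability argument, which is the genuine $s=0$ subtlety here.
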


\begin{proof}
%Let us first check tightness. Using the equations satisfied by the trajectories $x_{i,N}^t$, we obtain
%\begin{align*}
%\partial_t\frac1{N}\sum_{i=1}^N|x_{i,N}^t|^2&=-\frac2N\sum_{i=1}^Nx_{i,N}^t\cdot \frac1N\sum_{j,j\ne i}\nabla g_s(x_{i,N}^t-x_{j,N}^t)\\
%&=-\frac1{N^2}\sum_{i\ne j}^N(x_{i,N}^t-x_{j,N}^t)\cdot \nabla g_s(x_{i,N}^t-x_{j,N}^t)=\frac s{N^2}\sum_{i\ne j}^N g_s(x_{i,N}^t-x_{j,N}^t),
%\end{align*}
%and hence, since the energy is decreasing along the flow,
%\begin{align*}
%\partial_t\frac1{N}\sum_{i=1}^N|x_{i,N}^t|^2&\le\frac s{N^2}\sum_{i\ne j}^Ng_s(x_{i,N}^\circ-x_{j,N}^\circ).
%\end{align*}
%By assumption~\eqref{eq:asbounded}, this implies that the sequence $(\mu_N^t)_N$ is tight for all $t\ge0$. Let us now turn to the equivalence between~(i) and~(ii).
Property~(ii) clearly implies~(i) (and even $\Ec_N(t)\to0$), so it suffices to check the converse. Assume that $\limsup_{N}\Ec_N(t)\le0$. By tightness, up to extraction of a subsequence, the Prokhorov theorem gives $\mu_N^t\cvf*\nu^t$ for some $\nu^t\in\Pc(\R^d)$. For any $K>0$, we may write
\begin{align*}
\iint_{D^c}g_s(x-y)d\mu^t_N(x)d\mu_N^t(y)&\ge\iint_{D^c}K\wedge g_s(x-y)d\mu^t_N(x)d\mu_N^t(y)\\
&=-\frac KN+\int_{\R^d}\int_{\R^d} K\wedge g_s(x-y)d\mu^t_N(x)d\mu_N^t(y),
\end{align*}
and hence, successively passing to the limits $N\uparrow\infty$ and $K\uparrow\infty$, we find
\begin{align}\label{eq:ineqmuNconvnunrj}
\liminf_{N\uparrow\infty} \iint_{D^c}g_s(x-y)d\mu^t_N(x)d\mu_N^t(y)\ge \int_{\R^d}\int_{\R^d} g_s(x-y)d\nu^t(x)d\nu^t(y).
\end{align}
Combining this with convergence $\mu_N^t\cvf*\nu^t$ and with assumption $\limsup_{N}\Ec_N(t)\le0$, we obtain
\begin{align*}
0&\ge \limsup_{N\uparrow\infty}\iint_{D^c}g_s(x-y)d\mu_N^t(x)d\mu_N^t(y)-2\int_{\R^d}\int_{\R^d} g_s(x-y)d\nu^t(x)d\mu^t(y)+\int_{\R^d}\int_{\R^d} g_s(x-y)d\mu^t(x)d\mu^t(y)\\
&\ge\int_{\R^d}\int_{\R^d} g_s(x-y)d\nu^t(x)d\nu^t(y)-2\int_{\R^d}\int_{\R^d} g_s(x-y)d\nu^t(x)d\mu^t(y)+\int_{\R^d}\int_{\R^d} g_s(x-y)d\mu^t(x)d\mu^t(y)\\
&=\int_{\R^d}\int_{\R^d} g_s(x-y)d(\nu^t-\mu^t)(x)d(\nu^t-\mu^t)(y).
\end{align*}
The result then follows, noting that $\mu^t$ has bounded energy by Proposition~\ref{prop:existence}, that $\nu^t$ has bounded energy by~\eqref{eq:ineqmuNconvnunrj}, and noting that for any two Radon measures $\mu,\nu$ with finite energy we have
\[\int_{\R^d}\int_{\R^d} g_s(x-y)d(\nu-\mu)(x)d(\nu-\mu)(y)\ge0,\]
with equality if only if $\mu=\nu$ (see e.g.~\cite[Theorem~9.8]{Lieb-Loss} for $0<s<d$, and~\cite[Lemma~1.8]{Saff-Totik} for $s=0$).
\end{proof}

In the case of bounded weak solutions $\mu_1^t,\mu_2^t$ to~\eqref{eq:pdelim} as given by Proposition~\ref{prop:existence}, the following identity follows from an integration by parts and was crucially used in the proofs of Lemmas~\ref{lem:proofsmooth} and~\ref{lem:proofsmooth2} (cf.~\eqref{eq:ippCoul} and~\eqref{eq:ippRiesz}):
\begin{align}\label{eq:ippformmodenergy}
\int_{\R^d}\int_{\R^d} g_s(x-y)d(\mu_1^t-\mu_2^t)(x)d(\mu_1^t-\mu_2^t)(y)=\begin{cases}\int_{\R^d}|\nabla(h_1^t-h_2^t)|^2,&\text{if $s=d-2$, $d\ge2$};\\
\int_{\R^d\times\R}|\xi|^\gamma|\nabla(h_1^t-h_2^t)|^2,&\text{if $d-2<s<d$, $s\ge0$.}\end{cases}
\end{align}
Now we would need a corresponding identity in the context of the modulated energy $\Ec_N(t)$. Since $\nabla h_N^t$ does not belong to $\Ld^2(\R^d)$ or $\Ld^2(\R^d\times\R,|\xi|^\gamma dxd\xi)$, a regularization is then needed. Besides the modulated energy $\Ec_N$, we thus define the following $\eta$-approximation, based on the truncation introduced above:
\[\Ec_{N,\eta}(t):=\begin{cases}\int_{\R^d}|\nabla(h_{N,\eta}^t-h^t)|^2,&\text{if $s=d-2$, $d\ge2$};\\\int_{\R^d\times\R}|\xi|^\gamma|\nabla(h_{N,\eta}^t-h^t)|^2,&\text{if $d-2<s<d$, $s\ge0$}.\end{cases}\]
An integration by parts then yields the following proxy for identity~\eqref{eq:ippformmodenergy}, showing that the difference between the modulated energy $\Ec_N(t)$ and its approximation $\Ec_{N,\eta}(t)$ just comes from the diagonal terms (which are indeed excluded in $\Ec_N(t)$ but not in $\Ec_{N,\eta}(t)$). We refer to~\cite[Section~2.1]{Petrache-Serfaty-14} for a detailed proof.
\begin{lem}[Approximate modulated energy]\label{lem:linkENeta0}
Let $0\vee(d-2)\le s<d$. For all $t\ge0$, $N\ge1$ and $\eta>0$,
\begin{align*}
\Ec_{N,\eta}(t)=\Ec_N(t)+\frac{g_s(\eta)}N+o_{\eta}^{(N)}(1),
\end{align*}
where for any fixed $N$ we have $o_{\eta}^{(N)}(1)\to0$ as $\eta\downarrow0$.
\end{lem}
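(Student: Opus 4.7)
The plan is to expand both $\Ec_{N,\eta}(t)$ and $\Ec_N(t)$ by bilinearity and compare them term by term, so that the discrepancy reduces to the self-interaction contribution $g_s(\eta)/N$ plus a vanishing cross-term. The starting point is the integration by parts (in the extended half-space $\R^d\times\R$ for $d-2<s<d$, or directly on $\R^d$ in the Coulomb case), using $-\Div(|\xi|^\gamma\nabla h_{N,\eta}^t)=\mu_{N,\eta}^t\delta_{\R^d\times\{0\}}$ and $-\Div(|\xi|^\gamma\nabla h^t)=\mu^t\delta_{\R^d\times\{0\}}$. Since $\mu^t\in\Ld^\infty$ and $\mu_{N,\eta}^t$ is a finite sum of regularized (sphere) masses, the gradients are square-integrable against the weight, so the integration by parts yields
\[\Ec_{N,\eta}(t)=\int_{\R^d}(h_{N,\eta}^t-h^t)\,d(\mu_{N,\eta}^t-\mu^t).\]

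Next I would expand this into four pieces and use the symmetry $\int h_{N,\eta}^t\,d\mu^t=\int h^t\,d\mu_{N,\eta}^t$, giving
\[\Ec_{N,\eta}(t)=\int h_{N,\eta}^t\,d\mu_{N,\eta}^t-2\int h_{N,\eta}^t\,d\mu^t+\int h^t\,d\mu^t.\]
The self-energy term is the heart of the computation: writing $h_{N,\eta}^t=\frac1N\sum_ig_{s,\eta}(\cdot-x_{i,N}^t)$ and using the identity $g_s\ast\delta^{(\eta)}_0=g_{s,\eta}$ (which follows from the PDE characterization of $g_{s,\eta}$ in the extension), one obtains
\[\int h_{N,\eta}^t\,d\mu_{N,\eta}^t=\frac{1}{N^2}\sum_{i,j}\bigl(g_{s,\eta}\ast\delta^{(\eta)}_0\bigr)(x_{i,N}^t-x_{j,N}^t).\]
Under the separation hypothesis $2\eta<\eta_N$, the off-diagonal pairs $i\ne j$ satisfy $|x_{i,N}^t-x_{j,N}^t|>2\eta$, so the pair of spheres of radius $\eta$ around $x_{i,N}^t$ and $x_{j,N}^t$ are fully contained in the region where $g_{s,\eta}=g_s$, and the spherical mean-value-type property for the fundamental solution collapses these terms to $g_s(x_{i,N}^t-x_{j,N}^t)$. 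The diagonal terms $i=j$ each produce the self-energy of a sphere shell of radius $\eta$, which is precisely $g_s(\eta)$. Summing gives
\[\int h_{N,\eta}^t\,d\mu_{N,\eta}^t=\frac{1}{N^2}\sum_{i\ne j}g_s(x_{i,N}^t-x_{j,N}^t)+\frac{g_s(\eta)}{N}.\]

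It remains to track the cross term. Since $\mu^t$ is bounded and $g_s-g_{s,\eta}$ is supported in $B_\eta$ with $\int_{B_\eta}|g_s-g_s(\eta)|\lesssim \eta^{d-s}\to0$ (recall $s<d$), one has for each fixed $N$
\[\int(h_N^t-h_{N,\eta}^t)\,d\mu^t=\frac1N\sum_i\int_{B(x_{i,N}^t,\eta)}\bigl(g_s(x-x_{i,N}^t)-g_s(\eta)\bigr)d\mu^t(x)=o_\eta^{(N)}(1).\]
On the other hand, expanding $\Ec_N(t)$ by bilinearity and using that $\mu^t\otimes\mu^t$ and $\mu_N^t\otimes\mu^t$ do not charge the diagonal (by absolute continuity of $\mu^t$) gives
\[\Ec_N(t)=\frac{1}{N^2}\sum_{i\ne j}g_s(x_{i,N}^t-x_{j,N}^t)-2\int h_N^t\,d\mu^t+\int h^t\,d\mu^t.\]
Subtracting this from the expression for $\Ec_{N,\eta}(t)$ produces exactly $g_s(\eta)/N+o_\eta^{(N)}(1)$, which is the claim.

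The step requiring the most care is the self-energy computation, in particular the cancellation that turns $i\ne j$ terms into $g_s(x_{i,N}^t-x_{j,N}^t)$: this is where the separation $2\eta<\eta_N$, the identity $g_s\ast\delta^{(\eta)}_0=g_{s,\eta}$, and the constancy of $g_{s,\eta}(\cdot,0)$ inside $B_\eta$ all interact. Everything else is expansion, symmetry, and a soft dominated-convergence argument; since $N$ is kept fixed while $\eta\downarrow0$, no uniformity in $N$ is needed.
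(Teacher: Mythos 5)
The paper gives no proof of its own: it cites \cite[Section~2.1]{Petrache-Serfaty-14}, so your argument is a reconstruction, and the overall route — integrate by parts to rewrite $\Ec_{N,\eta}$ as a bilinear form, expand, reduce the discrepancy to the diagonal self-interaction, and use a Newton-type spherical mean identity — is exactly the expected one and does lead to the stated formula.

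There is, however, a genuine subtlety that your write-up glosses over and that matters in the non-Coulomb case $d-2<s<d$. You take at face value the paper's claim that $-\Div(|\xi|^\gamma\nabla g_{s,\eta})$ is the uniform measure on $\partial B_\eta\times\{0\}$, i.e.\ that $g_s\ast\delta^{(\eta)}_0=g_{s,\eta}$ with $\delta^{(\eta)}_0$ the surface measure on the $(d-1)$-sphere of $\R^d$. This is only true in the Coulomb case, where it is Newton's theorem. In the Riesz case, $g_{s,\eta}=g_s(\eta)\wedge g_s$ is constant on the solid $(d+1)$-dimensional ball $B'_\eta$, so the jump in $|\xi|^\gamma\partial_n g_{s,\eta}$ lives on the $d$-sphere $\partial B'_\eta\subset\R^{d+1}$ (with a $|\xi|^\gamma$ weight), not on $\partial B_\eta\times\{0\}$. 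Concretely, your intermediate quantity $\int h_{N,\eta}^t\,d\mu_{N,\eta}^t = \frac1{N^2}\sum_{i,j}(g_s\ast\delta_0^{(\eta)}\ast\delta_0^{(\eta)})(x_{i,N}^t-x_{j,N}^t)$ with the $(d-1)$-sphere interpretation is actually $+\infty$ for $s\ge d-1$ (for instance $d=1$, $s=0$, where $\delta_0^{(\eta)}=\tfrac12(\delta_{-\eta}+\delta_\eta)$ has atoms), so the four-term bilinear expansion you use is not even well defined there. The correct way (and the way \cite{Petrache-Serfaty-14} proceeds) is to expand the weighted Dirichlet integral $\int|\xi|^\gamma|\nabla(h_{N,\eta}^t-h^t)|^2$ directly and integrate each piece by parts against the \emph{extended} source measures $\nu_{x_i}:=-\Div(|\xi|^\gamma\nabla g_{s,\eta}(\cdot-x_i))$ supported on the $d$-spheres $\partial B'(x_i,\eta)$. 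Since $g_s\ast\nu_0=g_{s,\eta}$ (both solve the same PDE with the same decay), the off-diagonal terms with $|x_i-x_j|>2\eta$ still collapse to $g_s(x_i-x_j)$, and the diagonal term is $\int g_{s,\eta}\,d\nu_0=g_s(\eta)$ because $g_{s,\eta}\equiv g_s(\eta)$ on $\partial B'_\eta$; the cross term $\int h^t\,d\nu_{N,\eta}^t$ equals $\int h_{N,\eta}^t(\cdot,0)\,d\mu^t$ by the same convolution symmetry. This lands on exactly your final identity, but through finite quantities throughout. To be fair, the imprecision originates in the paper's own line ``$-\Div(|\xi|^\gamma\nabla g_{s,\eta})=\delta_0^{(\eta)}\delta_{\R^d\times\{0\}}$''; still, you should not reproduce it, because it is precisely the step that your argument hinges on.

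One small extra remark: your bound on the cross-term error $\int(h_N^t-h_{N,\eta}^t)\,d\mu^t\lesssim\|\mu^t\|_{\Ld^\infty}\eta^{d-s}$ is in fact uniform in $N$, which is harmless here but worth noting — the only genuinely $N$-dependent smallness in the lemma's error term is the threshold $\eta<\eta_N/2$ needed for the off-diagonal collapse.
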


\subsection{Gronwall argument on the modulated energy}
By Lemma~\ref{lem:propmodenergy}, in order to prove convergence $\mu_N^t\cvf*\mu^t$ as well as convergence of energies, up to tightness issues, it suffices to check that $\limsup_N\Ec_N(t)\le0$. This is achieved by a Gronwall argument. From now on we focus on the Riesz case $d-2<s<d$, $s\ge0$. The Coulomb case $s=d-2$, $d\ge2$ can be treated in exactly the same way, but is actually easier since it does not require to use the extension representation of Section~\ref{chap:caffsilv}.
%As far as the logarithmic case $s=0$, $d=1,2$, it cannot be treated in this way and is thus excluded here.

\begin{prop}\label{prop:MFL}
Let $d-2<s<d$, $s\ge0$. Let $\mu^\circ$ be a probability measure such that equation~\eqref{eq:pdelim} admits a solution $\mu^t$ that belongs to $\Ld^\infty([0,T];\Pc(\R^d)\cap C^{\sigma}(\R^d))$ for some $T>0$ and some $\sigma>2-d+s$. In the logarithmic case $s=0$, $d=1$, also assume that $\int_{\R}\log(2+|x|)|\mu^\circ(x)|dx<\infty$, and $\nabla\mu\in \Ld^\infty([0,T];\Ld^p(\R))$ for some $p<\infty$. Let $\mu_N^\circ\cvf*\mu^\circ$, assume
\begin{align}\label{eq:asnrjbounded}
%\limsup_{N\uparrow\infty}\int_{\R^d}|x|^2d\mu_N^\circ(x)<\infty,\qquad
\limsup_{N\uparrow\infty}\iint_{x\ne y}g_s(x-y)d\mu_N^\circ(x)d\mu_N^\circ(y)<\infty,
%\qquad\int_{\R^d}\int_{\R^d} g_s(x-y)d\mu^\circ(x)d\mu^\circ(y)<\infty,
\end{align}
and let $\mu_N^t$ be defined by~\eqref{eq:vortexdyn}--\eqref{eq:empiricalmeas}. Assume that, for all $t\in[0,T]$, the collection $\B_N^t(R_N^t)$ can be chosen with $R_N^t\to0$ in such a way that
\begin{align}\label{eq:cond1}
\liminf_{N\uparrow\infty}\liminf_{\eta\downarrow0}\bigg(\int_{\B_N^t(R_N^t)\times\R}|\xi|^\gamma|\nabla h_{N,\eta}^t|^2-\frac{g_s(\eta)}N\bigg)\ge 0,
\end{align}
and, denoting $g_s^+(t):=c_{d,s}^{-1}t^{-s}$ for $s>0$ and $g_0^+(t):=c_{d,0}^{-1}(-\log t)\vee0$ otherwise,
\begin{align}\label{eq:cond2}
\lim_{N\uparrow\infty}\frac1{N^2}\sum_{i=1}^Ng_s^+(d(x_{i,N}^t,\partial\B_N^t(R_N^t)))=0.
\end{align}
Then, for all $t\in[0,T]$, we have $\Ec_N(t)\le C_t(\Ec_N(0)+o_N(1))$.
\end{prop}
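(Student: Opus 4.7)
The plan is to mimic the stability argument of Lemma~\ref{lem:proofsmooth2}, now applied to the pair $(\mu_N^t,\mu^t)$, the main subtlety being that $\mu_N^t$ is a sum of Dirac masses so that the potential $\nabla h_N^t$ is singular at the particle locations. Following the strategy sketched in the introduction, I would work at the level of the $\eta$-regularized modulated energy $\Ec_{N,\eta}(t)$, compute its time derivative, and only at the very end send $\eta\downarrow0$ using Lemma~\ref{lem:linkENeta0} to recover $\Ec_N(t)$. A key elementary observation is that since $2\eta<\eta_N$, one has $\partial_t x_{i,N}^t=-2\nabla h_{N,\eta}^t(x_{i,N}^t)$: indeed $\nabla g_{s,\eta}$ vanishes on $B_\eta$ and coincides with $\nabla g_s$ outside, so the particle dynamics can be rewritten in terms of the smeared potential, the self-interaction being automatically erased.

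The first step is to compute $\partial_t\Ec_{N,\eta}(t)$. Using the integration-by-parts identity $\Ec_{N,\eta}(t)=\int_{\R^d}(h_{N,\eta}^t-h^t)\,d(\mu_{N,\eta}^t-\mu^t)$, differentiating in time, and substituting the particle ODEs and the PDE~\eqref{eq:pdelim}, I expect to obtain after reorganization
\[\partial_t\Ec_{N,\eta}(t)=-(\text{dissipation})-2\int_{\R^d}\nabla h^t\cdot\nabla(h_{N,\eta}^t-h^t)\,d(\mu_{N,\eta}^t-\mu^t)+o^{(N)}_\eta(1),\]
where the dissipation is a nonnegative sum over $i$ of sphere integrals of $|\nabla(h_{N,\eta}^t-h^t)|^2$ on $\partial B(x_{i,N}^t,\eta)$, and the $o^{(N)}_\eta(1)$ collects discrepancies between evaluating velocities at centers and at sphere points.

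Second, I would treat the non-dissipative cross-term as in the proofs of Lemmas~\ref{lem:proofsmooth} and~\ref{lem:proofsmooth2}: use $-\Div(|\xi|^\gamma\nabla(h_{N,\eta}^t-h^t))=(\mu_{N,\eta}^t-\mu^t)\delta_{\R^d\times\{0\}}$ together with the Caffarelli-Silvestre extension to obtain the Delort-type stress-energy tensor representation, then integrate by parts against $\nabla h^t$. The final step of those proofs --- applying $2|ab|\le a^2+b^2$ globally --- would bound the result by $\|\nabla^2h^t\|_{\Ld^\infty}\int|\xi|^\gamma|\nabla(h_{N,\eta}^t-h^t)|^2=\|\nabla^2h^t\|_{\Ld^\infty}\Ec_{N,\eta}(t)$, which is useless since $\Ec_{N,\eta}(t)\sim g_s(\eta)/N\to\infty$ as $\eta\downarrow0$. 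This is where assumptions~\eqref{eq:cond1}--\eqref{eq:cond2} enter: imitating~\cite{Serfaty-15}, I would apply $2|ab|\le a^2+b^2$ only outside the balls $\B_N^t(R_N^t)$, the restriction to $(\B_N^t(R_N^t))^c\times\R$ being legitimized by an approximation argument. By~\eqref{eq:cond1}, the divergent piece $g_s(\eta)/N$ of $\int|\xi|^\gamma|\nabla h_{N,\eta}^t|^2$ is absorbed into the balls, so that the outside integral is $\le\Ec_{N,\eta}(t)-g_s(\eta)/N+o(1)=\Ec_N(t)+o(1)$ by Lemma~\ref{lem:linkENeta0}; assumption~\eqref{eq:cond2} bounds the residual contributions coming from particles too close to $\partial\B_N^t(R_N^t)$, which otherwise spoil the inside/outside splitting.

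Combining these yields $\partial_t\Ec_{N,\eta}(t)\le C\|\nabla^2h^t\|_{\Ld^\infty}(\Ec_N(t)+o_\eta^{(N)}(1)+o_N(1))$; integrating on $[0,t]\subset[0,T]$, sending $\eta\downarrow0$, and invoking Lemma~\ref{lem:hbounded} together with the H\"older regularity hypothesis on $\mu^t$ to bound $\|\nabla^2h^t\|_{\Ld^\infty}$ uniformly on $[0,T]$ would deliver $\Ec_N(t)\le C_t(\Ec_N(0)+o_N(1))$. The main obstacle I anticipate is performing the reorganization of Step~1 rigorously: justifying all integrations by parts in the presence of the smeared Dirac masses on the spheres $\partial B(x_{i,N}^t,\eta)$, and keeping precise track of the divergent piece $g_s(\eta)/N$ so that it cancels exactly via~\eqref{eq:cond1} without leaving any residual $O(1)$ error that would destroy the Gronwall loop.
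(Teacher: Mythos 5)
Your outline correctly identifies the high-level architecture: a Gronwall loop on (a regularization of) the modulated energy, a Delort-type stress-energy tensor representation of the cross-term, and the inequality $2|ab|\le a^2+b^2$ applied only outside the ball collection, with conditions~\eqref{eq:cond1}--\eqref{eq:cond2} controlling the inside contribution. The choice to differentiate $\Ec_{N,\eta}$ from the start (rather than $\Ec_N$ directly, introducing $\eta$ only later as the paper does) is a legitimate variant, and the observation that $\partial_tx_{i,N}^t=-2\nabla h_{N,\eta}^t(x_{i,N}^t)$ for $2\eta<\eta_N$ is correct and useful.

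However, there is a genuine gap in how you propose to handle the error from the piecewise-constant approximation of $\nabla h^t$. You wave at ``an approximation argument'' legitimizing the restriction to $(\B_N^t(R))^c\times\R$, and you treat the dissipation term simply as a sign-helpful quantity to be discarded. But the error $\langle S_N^t;\nabla h^t-v^t\rangle$, with $v^t$ the piecewise-constant replacement of $\nabla h^t$, is \emph{not} $o_R(1)$ by itself: its $\mu_N^t\otimes\mu_N^t$ diagonal-excluded piece involves
\[\iint_{D^c} w^t(x)\cdot\nabla g_s(x-y)\,d\mu_N^t(y)\,d\mu_N^t(x),\qquad w^t:=\nabla h^t-v^t,\]
and $|\nabla g_s|\sim|x-y|^{-s-1}$ is \emph{not} integrable against the modulated energy (which only controls $|x-y|^{-s}$), so $\|w^t\|_{\Ld^\infty}\lesssim R$ alone does not make this small. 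The paper's Step~2 handles it precisely by \emph{retaining} the nonpositive dissipation term $-2\int|\pv\int\nabla g_s(x-y)\,d(\mu_N^t-\mu^t)(y)|^2\,d\mu_N^t(x)$ from Step~1 and applying $R|a|\le\e a^2+(4\e)^{-1}R^2$, so that $\e a^2$ is absorbed by the dissipation and only $(4\e)^{-1}R^2=o_R(1)$ survives. This is exactly why the paper flags (Remark~\ref{rem}(e)) that the argument cannot work for the purely conservative flow, where the dissipation vanishes. Without this absorption mechanism your Gronwall inequality would carry an uncontrolled term and the loop would not close.

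A secondary omission: the paper's Step~1 spends considerable effort verifying that the modulated stress-energy tensor $T_N^t$ (with diagonal excluded) is genuinely integrable on $\R^d\times\R$ with weight $|\xi|^\gamma$ (and, in the $s=0$, $d=1$ case, that $|\nabla^2h^t|\,|T_N^t|$ is), precisely so that the integration by parts defining~\eqref{eq:derENt0r} and the splitting into inside/outside the balls in Step~2 are licit. You should anticipate that this is a nontrivial computation, especially in the non-Coulomb range $d-2<s<d$, where the extended kernel has a fat tail in $\xi$ that must be integrated against $|\xi|^\gamma$. As it stands, your worry about ``justifying all integrations by parts'' is pointed at the wrong place: the real analytical work is integrability of $T_N^t$ and the dissipation absorption, not the sphere-versus-center velocity discrepancy (which is indeed a straightforward $o_\eta^{(N)}(1)$).
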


\begin{rem}\label{rem:conditions}
In the ideal case when all particles remain well-separated, that is with a minimal distance $\eta_N$ of order $N^{-1/d}$, then, taking $\B_N^t(R_N^t)$ to be the union of balls of radius $R_N^t/N$ centered at the points $x_{i,N}^t$'s, with $R_N^t/N\ll N^{-1/d}$, condition~\eqref{eq:cond2} simply becomes $g_s(R_N^t/N)/N\ll1$. On the other hand, neglecting interactions between particles, hence focusing on the (divergent) self-interactions, we formally find
\begin{align*}
\int_{\B_N^t(R_N^t)\times\R}|\xi|^\gamma|\nabla h_{N,\eta}^t|^2&=\frac1{N^2}\sum_{i=1}^N\int_{|x-x_{i,N}^t|<R_N^t/N}|\xi|^\gamma|\nabla g_{s,\eta}(x-x_{i,N}^t,\xi)|^2+\ldots\\
&=\frac1{N}\int_{\eta<|x|<R_N^t/N}|\xi|^\gamma|\nabla g_{s}(x,\xi)|^2+\ldots\\
&=\frac1{N}(g_s(\eta)-g_s(R_N^t/N))+\ldots,
\end{align*}
so that condition~\eqref{eq:cond1} would amount to requiring $g_s(R_N^t/N)/N\ll1$, which is thus just the same as condition~\eqref{eq:cond2}. In other words, for $s>0$, both conditions would then take the form $R_N^t\gg N^{-(1-s)/s}$, which is compatible with $R_N^t\to0$ only if $s<1$. In Section~\ref{chap:ball0}, we prove that a consistent choice of the small balls $\B_N^t(R_N^t)$ is indeed possible whenever $0\le s<1$.

To go beyond the restriction $s<1$ via this approach, we would need to modify Proposition~\ref{prop:MFL}, in particular by refining the (blind) approximation argument used in Step~2 of the proof below, in order to relax the smallness condition for the total radius $R_N^t\to0$. To do that, precise microscopic information on the particle dynamics would become needed. Getting a handle on such information seems however to be a difficult task and is not pursued here.
\end{rem}

\begin{proof}
%Lemma~\ref{lem:hbounded} ensures that $\|h^t\|_{\Ld^\infty},\|\nabla h^t\|_{\Ld^\infty},\|\nabla^2h^t\|_{\Ld^\infty}\lesssim_t1$. In the case $s=0$, $d=1$, by the additional decay assumption, Lemma~\ref{lem:hbounded} also ensures that $|\nabla^2h^t(x)|\lesssim_t(1+|x|)^{-\kappa_t}$ for some $\kappa_t\in(0,1)$. To unify notations, we set $\kappa_t=0$ in the case $s>0$.
By the regularity assumption for $\mu^t$, Lemma~\ref{lem:hbounded} ensures that we have $\|(\nabla_xh^t,\nabla_x^2h^t)\|_{\Ld^\infty}\lesssim_t1$, and also, in the case $s=0$, $d=1$, $\|\nabla_x^2h^t\|_{\Ld^p}\lesssim_t1$ for some $p<\infty$.
We split the proof into four steps.

\medskip
\noindent{\it Step~1: Time-derivative of $\Ec_N(t)$ and modulated stress-energy tensor.}
In this step, we prove equality
\begin{align}\label{eq:derENt0r}
\partial_t\Ec_N(t)&=-\int_{\R^d\times\R}|\xi|^\gamma\nabla^2_xh^t(x): T_N^t(x,\xi)dxd\xi\\
&\qquad-2\int_{\R^d}\bigg|\pv\int_{\R^d\setminus\{x\}}\nabla g_s(x-y)d(\mu_N^t-\mu^t)(y)\bigg|^2d\mu_N^t(x),\nonumber
\end{align}
where we use the usual principal value symbol
\[\pv\int_{\R^d\setminus\{x\}}:=\lim_{r\downarrow0}\int_{\R^d\setminus B(x,r)}\]
and where the modulated stress-energy tensor $T_N^t=(T_N^{t;kl})_{k,l=1}^{d+1}$ is defined as follows: for all $1\le k,l\le d$,
% and all $1\le l\le d$, we set $T_N^{t;k,d+1}=0$ and
\begin{align}\label{eq:TNt}
T_N^{t;kl}(x,\xi):=&~2\iint_{D^c}\partial_k g_s(x-y,\xi)\partial_l g_s(x-z,\xi)d(\mu_N-\mu)(y)d(\mu_N-\mu)(z)\\
&\qquad-\delta_{kl}\iint_{D^c}\nabla g_s(x-y,\xi)\cdot\nabla g_s(x-z,\xi)d(\mu_N-\mu)(y)d(\mu_N-\mu)(z).\nonumber
\end{align}
Moreover, as checked at the end of this step, the integrals in~\eqref{eq:derENt0r} are summable: more precisely, we prove that $|T_N^t|$ belongs to $\Ld^1(\R^d\times\R,|\xi|^\gamma dxd\xi)$ if $s>0$, and that $|\nabla^2_xh^t(x)||T_N^t(x,\xi)|$ belongs to $\Ld^1(\R^d\times\R,|\xi|^\gamma dxd\xi)$ if $s=0$, $d=1$.
Although the second term in the right-hand side of~\eqref{eq:derENt0r} is nonpositive, we do not bound it by~$0$ yet, contrarily to what is done in the proof of Lemmas~\ref{lem:proofsmooth} and~\ref{lem:proofsmooth2}, since it will be useful in Step~2 below to absorb some error terms.

Using the equations satisfied by $\mu^t$
and by the trajectories $x_{i,N}^t$, and noting that the gradient $\nabla h^t$ is given by
\[\nabla h^t(x)=\pv\int_{\R^d\setminus\{x\}}\nabla g_s(x-y)d\mu^t(y),\]
where the principal value may only be omitted for $s<d-1$, we find the following expression for the time-derivative of the modulated energy $\Ec_N(t)$ defined in~\eqref{eq:modnrj}:
\begin{align*}
\partial_t\Ec_N(t)&=\partial_t\int_{\R^d}\int_{\R^d} g_s(x-y)d\mu^t(x)d\mu^t(y)+\partial_t\frac1{N^2}\sum_{i\ne j}^Ng_s(x_{i,N}^t-x_{j,N}^t)-\partial_t\frac2N\sum_{i=1}^N\int_{\R^d} g_s(x_{i,N}^t-y)d\mu^t(y)\\
&=-2\int_{\R^d} \nabla h^t(x)\cdot \pv\int_{\R^d\setminus\{x\}}\nabla g_s(x-y)d\mu^t(y)d\mu^t(x)-\frac2{N}\sum_{i=1}^N\bigg|\frac1N\sum_{j,j\ne i}\nabla g_s(x_{i,N}^t-x_{j,N}^t)\bigg|^2\\
&\qquad+\frac2{N^2}\sum_{i\ne j}^N\nabla h^t(x_{i,N}^t)\cdot\nabla g_s(x_{i,N}^t-x_{j,N}^t)+\frac2N\sum_{i=1}^N\pv\int_{\R^d\setminus\{x_{i,N}^t\}} \nabla h^t(x)\cdot \nabla g_s(x-x_{i,N}^t)d\mu^t(x).
\end{align*}
Let us rearrange the terms as follows:
\begin{align*}
\partial_t\Ec_N(t)&=-2\int_{\R^d}\bigg|\pv\int_{\R^d\setminus\{x\}}\nabla g_s(x-y)d(\mu_N^t-\mu^t)(y)\bigg|^2d\mu_N^t(x)\\
&\qquad-2\int_{\R^d}\nabla h^t(x)\cdot\pv\int_{\R^d\setminus\{x\}}\nabla g_s(x-y)d\mu^t(y)d\mu^t(x)\\
&\qquad+2\int_{\R^d}\nabla h^t(x)\cdot\pv\int_{\R^d\setminus\{x\}}\nabla g_s(x-y)d\mu^t(y)d\mu_N^t(x)\\
&\qquad-2\int_{\R^d}\nabla h^t(x)\cdot\int_{\R^d\setminus\{x\}}\nabla g_s(x-y)d\mu_N^t(y)d\mu_N^t(x)\\
&\qquad+2\int_{\R^d}\pv\int_{\R^d\setminus\{y\}}\nabla h^t(x)\cdot \nabla g_s(x-y)\mu^t(x)\mu_N^t(y),
\end{align*}
and note that the last four terms in the right-hand side may be combined to yield the following simpler expression:
\begin{align}\label{eq:derENt-001}
\partial_t\Ec_N(t)&=-2\int_{\R^d}\bigg|\pv\int_{\R^d\setminus\{x\}}\nabla g_s(x-y)d(\mu_N^t-\mu^t)(y)\bigg|^2d\mu_N^t(x)\\
&\qquad-\underbrace{\iint_{D^c}(\nabla h^t(x)-\nabla h^t(y))\cdot \nabla g_s(x-y)d(\mu_N^t-\mu^t)(y)d(\mu_N^t-\mu^t)(x)}_{=:\,I_N(t)}.\nonumber
\end{align}
In the distributional sense on $\R^d$, using canonical regularizations, we may alternatively write
\begin{align}\label{eq:derENt-002}
I_N(t)=\langle S_N^t;\nabla h^t\rangle=\sum_{k=1}^d\langle S_N^{t;k};\partial_kh^t\rangle,
\end{align}
where $S_N^t=(S_N^{t;k})_{k=1}^d$ and, for all $1\le k\le d$,
\[S_N^{t;k}(x):=2(\mu_N^t-\mu^t)(x)\pv\int_{\R^d\setminus\{x\}}\partial_k g_s(x-y)d(\mu_N^t-\mu^t)(y).\]
Since $-\Div(|\xi|^\gamma\nabla g_s(x-x_0,\xi))=\delta_{x_0}(x)\delta_{\R^d\times\{0\}}(x,\xi)$ for all $x_0\in\R^d$, we have in the distributional sense on $\R^d\times\R$
\begin{align*}
S_N^{t,k}(x)\delta_{\R^d\times\{0\}}(x,\xi)&=-2\pv\iint_{D^c}\Div(|\xi|^\gamma\nabla g_s(x-z,\xi))\partial_k g_s(x-y,\xi)d(\mu_N^t-\mu^t)(z)d(\mu_N^t-\mu^t)(y)\\
&=-\pv\iint_{D^c}\big(\Div(|\xi|^\gamma\nabla g_s(x-z,\xi))\partial_k g_s(x-y,\xi)+\Div(|\xi|^\gamma\nabla g_s(x-y,\xi))\partial_k g_s(x-z,\xi)\big)\\
&\hspace{4cm}\times d(\mu_N^t-\mu^t)(z)d(\mu_N^t-\mu^t)(y).
\end{align*}
Now note the following algebraic identity in the distributional sense on $\R^d\times\R$: for all $1\le k\le d$,
\begin{align*}
&\Div(|\xi|^\gamma\nabla g_s(x-y,\xi))\partial_kg_s(x-z,\xi)+\Div(|\xi|^\gamma\nabla g_s(x-z,\xi))\partial_kg_s(x-y,\xi)\\
&\hspace{4cm}=\frac12\sum_{l=1}^{d+1}\big(\partial_l(|\xi|^\gamma G_s^{lk}(x,\xi;y,z))+\partial_l(|\xi|^\gamma G_s^{lk}(x,\xi;z,y))\big),
\end{align*}
where we have set
\begin{align}\label{eq:defGs}
G_s^{lk}(x,\xi;y,z):=2\partial_lg_s(x-y,\xi)\partial_kg_s(x-z,\xi)-\delta_{lk}\sum_{m=1}^{d+1}\partial_mg_s(x-y,\xi)\partial_mg_s(x-z,\xi).
\end{align}
This proves the (Delort-type) identity
\begin{align}\label{eq:SN-TN}
S_N^{t;k}(x)\delta_{\R^d\times\{0\}}(x,\xi)=-\sum_{l=1}^{d+1}\partial_l(|\xi|^\gamma T_N^{t;lk}(x,\xi))
\end{align}
for all $1\le k\le d$, and the conclusion~\eqref{eq:derENt0r} then follows from~\eqref{eq:derENt-001}, \eqref{eq:derENt-002} and an integration by parts.

We now turn to the claimed integrability of the modulated stress-energy tensor $T_N^t$.
We first consider the case $d-2<s<d$, $s>0$.
For that purpose, we begin with the bound
\begin{align}\label{eq:boundintegTN}
\int_{\R^d\times\R}|\xi|^\gamma|T_N^t|\lesssim~&\int_{\R^d\times\R}|\xi|^\gamma\iint_{D^c}|(x-y,\xi)|^{-s-1}|(x-z,\xi)|^{-s-1}d(\mu_N^t+\mu^t)(y)d(\mu_N^t+\mu^t)(z)dxd\xi\nonumber\\
=~&\iint_{D^c}\bigg(\int_{\R^d\times\R}|\xi|^\gamma|(x-y,\xi)|^{-s-1}|(x-z,\xi)|^{-s-1}dxd\xi\bigg)d(\mu_N^t+\mu^t)(y)d(\mu_N^t+\mu^t)(z).
\end{align}
Let us compute the integral over $\R^d\times\R$. Denoting for simplicity $c_{yz}:=(y+z)/2$ and $q:=s+1$, we decompose, for all $y\ne z$,
\[\int_{\R^d}(|x-y|^2+1)^{-q/2}(|x-z|^2+1)^{-q/2}dx=I_{yz}^1+I_{yz}^2+I_{yz}^3+I_{yz}^4,\]
where
\begin{align*}
I^1_{yz}:=&\int_{|x-y|\le\frac12|y-z|}(|x-y|^2+1)^{-q/2}(|x-z|^2+1)^{-q/2}dx,\\
I_{yz}^2:=&\int_{|x-z|\le\frac12|y-z|}(|x-y|^2+1)^{-q/2}(|x-z|^2+1)^{-q/2}dx,\\
I_{yz}^3:=&\int_{|x-y|,|x-z|>\frac12|y-z|\atop|x-c_{yz}|\le|y-z|}(|x-y|^2+1)^{-q/2}(|x-z|^2+1)^{-q/2}dx,\\
I_{yz}^4:=&\int_{|x-c_{yz}|>|y-z|}(|x-y|^2+1)^{-q/2}(|x-z|^2+1)^{-q/2}dx,
\end{align*}
Using that $|x-y|\le\frac12|y-z|$ implies $|x-z|\ge\frac12|y-z|$, we may estimate
\begin{align*}
I_{yz}^1&\le(|y-z|^2/4+1)^{-q/2}\int_{|x-y|\le\frac12|y-z|}(|x-y|^2+1)^{-q/2}dx\lesssim(|y-z|/2+1)^{d-2q},
\end{align*}
and similarly for $I_{yz}^2$. Moreover,
\begin{align*}
I_{yz}^3&\le (|y-z|^2/4+1)^{-q}\int_{|x-c_{yz}|\le|y-z|}dx\lesssim (|y-z|^2/4+1)^{-q}|y-z|^d\lesssim (|y-z|/2+1)^{d-2q},
\end{align*}
and also, since $d-2q<0$ follows from the choice $s> d-2$, $s\ge0$,
\begin{align*}
I_{yz}^4&\lesssim \int_{|x-c_{yz}|>|y-z|}(|x-y|+1)^{-q}(|x-z|+1)^{-q}dx\\
&\le \int_{|x-c_{yz}|>|y-z|}(|x-c_{yz}|-|y-z|/2+1)^{-2q}dx\lesssim (|y-z|/2+1)^{d-2q}.
\end{align*}
This proves, for all $y\ne z$,
\[\int_{\R^d}(|x-y|^2+1)^{-q/2}(|x-z|^2+1)^{-q/2}dx\lesssim (|y-z|/2+1)^{d-2q},\]
and hence by scaling
\begin{align*}
\int_{\R^d}|(x-y,\xi)|^{-q}|(x-z,\xi)|^{-q}dx\lesssim (|y-z|/2+|\xi|)^{d-2q},
\end{align*}
so that we obtain, as by definition $\gamma=q-d\in(-1,1)$,
\begin{align*}
\int_{\R^d\times\R}|\xi|^\gamma|(x-y,\xi)|^{-q}|(x-z,\xi)|^{-q}dxd\xi&\lesssim\int_{\R}|\xi|^{q-d}(|y-z|+|\xi|)^{d-2q}d\xi,
\end{align*}
Splitting the integrals over $\xi$ into the part where $|\xi|\le |y-z|$ and that where $|\xi|>|y-z|$, and noting that $q>1$ follows from $s>0$, we find
\begin{align*}
&\int_{\R^d\times\R}|\xi|^\gamma|(x-y,\xi)|^{-q}|(x-z,\xi)|^{-q}dxd\xi\\
\lesssim~&|y-z|^{d-2q}\int_{|\xi|\le|y-z|}|\xi|^{q-d}d\xi+\int_{|\xi|>|y-z|}|\xi|^{q-d}|\xi|^{d-2q}d\xi\lesssim|y-z|^{1-q}=|y-z|^{-s}.
\end{align*}
Combining this with~\eqref{eq:boundintegTN} finally yields
\[\int_{\R^d\times\R}|\xi|^\gamma|T_N^t|\lesssim\iint_{D^c}|y-z|^{-s}d(\mu_N^t+\mu^t)(y)d(\mu_N^t+\mu^t)(z),\]
and hence, by assumption~\eqref{eq:asnrjbounded}, since both the particle and the mean-field energies are decreasing along the flow (see Proposition~\ref{prop:existence} for the mean-field energy),
\begin{align*}
\int_{\R^d\times\R}|\xi|^\gamma|T_N^t|&\lesssim \int_{\R^d}\int_{\R^d}g_s(y-z)d\mu^t(y)d\mu^t(z)+\iint_{D^c}g_s(y-z)d\mu_N^t(y)d\mu_N^t(z)+2\int h^td\mu_N^t\\
&\le \int_{\R^d}\int_{\R^d}g_s(y-z)d\mu^\circ(y)d\mu^\circ(z)+\iint_{D^c}g_s(y-z)d\mu_N^\circ(y)d\mu_N^\circ(z)+2\|h^t\|_{\Ld^\infty}\lesssim_t1.
\end{align*}
We now briefly consider the case $s=0$, $d=1$ (hence $\gamma=0$, $q=1$). Let $1<p<\infty$ be such that $\|\nabla^2h^t\|_{\Ld^p}\lesssim_t1$. Arguing as above, we obtain
\[\int_{\R}|\nabla^2h^t(x)||(x-y,\xi)|^{-1}|(x-z,\xi)|^{-1}dx\lesssim \|\nabla^2h^t\|_{\Ld^\infty}(|y-z|+|\xi|)^{-1},\]
and similarly, by the Hölder inequality, for $1/p+1/p'=1$, $p'>1$,
\begin{align*}
\int_{\R}|\nabla^2 h^t(x)||(x-y,\xi)|^{-1}|(x-z,\xi)|^{-1}dx&\lesssim \|\nabla^2h^t\|_{\Ld^{p}}\bigg(\int_{\R}|(x-y,\xi)|^{-p'}|(x-z,\xi)|^{-p'}dx\bigg)^{1/p'}\\
&\lesssim \|\nabla^2h^t\|_{\Ld^{p}}(|y-z|+|\xi|)^{\frac1{p'}-2}.
\end{align*}
Splitting the integral over $\xi$ into the part where $|\xi|\le|y-z|\vee1$ and that where $|\xi|>|y-z|\vee1$, we may then estimate
\begin{align*}
&\int_{\R\times\R}|\nabla^2h^t(x)||(x-y,\xi)|^{-1}|(x-z,\xi)|^{-1}dxd\xi\\
\lesssim~~&\|\nabla^2h^t\|_{\Ld^\infty}\int_{|\xi|\le|y-z|\vee1}(|y-z|+|\xi|)^{-1}d\xi+\|\nabla^2h^t\|_{\Ld^{p}}\int_{|\xi|>|y-z|\vee1}(|y-z|+|\xi|)^{\frac1{p'}-2}d\xi\\
\lesssim_{t}~&1-0\wedge\log(|y-z|)=1+0\vee g_0(y-z),
\end{align*}
so that the conclusion now easily follows just as in the case $s>0$.
%(In the case $s=d-2\ge0$, $d>2$, the same argument holds as that leading to~\eqref{eq:faild2} with $\xi=0$. For $s=0$, $d=2$, this argument clearly fails: we then need to consider $\int_{\R^d}|\nabla^2h^t||T_N^t|$ and to recall that Lemma~\ref{lem:hbounded} ensures that $\nabla^2h^t$ belongs to $\Ld^p(\R^d)$ for all $1<p\le\infty$.)

\bigskip
\noindent{\it Step~2: Approximation argument.}
For all $t\ge0$ and all $R\in(0,1)$, applying~\cite[Proposition~9.6]{SS-book},
there exists a smooth approximation $ v^t$ of the function $\nabla h^t\in C^{0,1}(\R^d;\R^d)$ such that $v^t$ is constant on each ball of the collection $\B_N^t(R)$, satisfies, for all $\alpha\in[0,1]$,
\begin{align}\label{eq:approxaffprop}
\|v^t-\nabla h^t\|_{C^\alpha}\le CR^{1-\alpha}\|\nabla^2h^t\|_{\Ld^\infty}\le C_tR^{1-\alpha},
\end{align}
and also satisfies $\|\nabla v^t\|_{\Ld^p}\lesssim_t1$ for some $p<\infty$ in the case $s=0$, $d=1$.
%On $\R^d\times\R$ define $\tilde v^t(x,\xi):=\tilde v^t(x,0):=v^t(x)$.
In this step, we prove the following estimate:
\begin{align}\label{eq:boundaff0}
\partial_t\Ec_N(t)\le-\int_{\R^d\times\R}|\xi|^\gamma\nabla v^t:T_N^t+C_t\,o_R(1),
\end{align}
where $o_R(1)$ denotes a quantity that goes to $0$ as $R\downarrow0$.

Using relation~\eqref{eq:SN-TN} as well as the integrability properties of $T_N^t$, we may decompose the first term in the right-hand side of~\eqref{eq:derENt0r} as follows:
\begin{align}\label{eq:rewriteDelort0}
\int_{\R^d\times\R}|\xi|^\gamma\nabla_{x,\xi}(\zeta(\xi)\nabla_xh^t(x)):T_N^t(x,\xi)dxd\xi=\langle S_N^t;\nabla h^t\rangle &=\langle S_N^t; v^t\rangle+\langle S_N^t;\nabla h^t- v^t\rangle\nonumber\\
&=\langle S_N^t\delta_{\R^d\times\{0\}}; v^t\rangle +\langle S_N^t;\nabla h^t- v^t\rangle \nonumber\\
&=\int_{\R^d\times\R}|\xi|^\gamma\nabla v^t: T_N^t+\langle S_N^t;\nabla h^t- v^t\rangle.
\end{align}
It remains to estimate the last term in the right-hand side of~\eqref{eq:rewriteDelort0}. Denoting for simplicity $w^t:=\nabla h^t- v^t$, we may decompose
\begin{align}
\langle S_N^t;\nabla h^t- v^t\rangle&=\iint_{D^c}(w^t(x)-w^t(y))\cdot \nabla g_s(x-y)d(\mu_N^t-\mu^t)(y)d(\mu_N^t-\mu^t)(x)\nonumber\\
&=2\int_{\R^d}\int_{\R^d} w^t(x)\cdot \nabla g_s(x-y)d\mu^t(y)d\mu^t(x)\label{eq:decompSNerr}\\
&\qquad-4\int_{\R^d}\int_{\R^d} (w^t(x)-w^t(y))\cdot \nabla g_s(x-y)d\mu_N^t(y)d\mu^t(x)\nonumber\\
&\qquad+2\iint_{D^c}w^t(x)\cdot \nabla g_s(x-y)d\mu_N^t(y)d\mu_N^t(x).\nonumber
\end{align}
For the first term in the right-hand side of~\eqref{eq:decompSNerr}, we simply have by~\eqref{eq:approxaffprop}
\begin{align*}
\bigg|\int_{\R^d}\int_{\R^d} w^t(x)\cdot \nabla g_s(x-y)d\mu^t(y)d\mu^t(x)\bigg|&=\bigg|\int_{\R^d}w^t\cdot\nabla h^td\mu^t\bigg|\le \|w^t\|_{\Ld^\infty}\|\nabla h^t\|_{\Ld^\infty}\le C_tR.
\end{align*}
As far as the second term is concerned, choosing $\sigma>s+1-d$, $0\le\sigma<1$, and recalling that $\mu^t$ remains bounded by assumption, we find by~\eqref{eq:approxaffprop}
\begin{align*}
&\bigg|\int_{\R^d}\int_{\R^d} (w^t(x)-w^t(y))\cdot \nabla g_s(x-y)d\mu_N^t(y)d\mu^t(x)\bigg|\\
\lesssim~&\|w^t\|_{C^\sigma}\sup_{y\in\R^d}\int |x-y|^{-s-1+\sigma}d\mu^t(x)\\
\le~&\|w^t\|_{C^\sigma}\sup_{y\in\R^d}\bigg(\|\mu^t\|_{\Ld^\infty}\int_{|x-y|\le1} |x-y|^{-s-1+\sigma}dx+\int_{|x-y|>1}d\mu^t(x)\bigg)\\
\le~&\|w^t\|_{C^\sigma}(1+\|\mu^t\|_{\Ld^\infty})\le C_tR^{1-\sigma}.
\end{align*}
Combining these two bounds with~\eqref{eq:decompSNerr}, using~\eqref{eq:approxaffprop} once again, we obtain, for $R\downarrow0$,
\begin{align*}
|\langle S_N^t;\nabla h^t- v^t\rangle|&\lesssim_t o_R(1)+R\int_{\R^d}\bigg|\int_{\R^d\setminus\{x\}}\nabla g_s(x-y)d\mu_N^t(y)\bigg|~d\mu_N^t(x)\\
&\lesssim_t o_R(1)+R\int_{\R^d}\bigg|\pv\int_{\R^d\setminus\{x\}}\nabla g_s(x-y)d(\mu_N^t-\mu^t)(y)\bigg|~d\mu_N^t(x)\\
&\qquad+R\int_{\R^d}\bigg|\pv\int_{\R^d\setminus\{x\}}\nabla g_s(x-y)d\mu^t(y)\bigg|~d\mu_N^t(x),
\end{align*}
and thus, noting that
\[\int_{\R^d}\bigg|\pv\int_{\R^d\setminus\{x\}}\nabla g_s(x-y)d\mu^t(y)\bigg|~d\mu_N^t(x)\le\|\nabla h^t\|_{\Ld^\infty}\le C_t,\]
we find
\begin{align*}
|\langle S_N^t;\nabla h^t- v^t\rangle|&\lesssim_t o_R(1)+R\int_{\R^d}\bigg|\pv\int_{\R^d\setminus\{x\}}\nabla g_s(x-y)d(\mu_N^t-\mu^t)(y)\bigg|~d\mu_N^t(x).
\end{align*}
Hence, for all $\e\in(0,1)$, using inequality $R|a|\le \e a^2+ (4\e)^{-1} R^2$, we obtain
\begin{align*}
|\langle S_N^t;\nabla h^t- v^t\rangle|&\lesssim_t \e^{-1} o_R(1)+\e \int_{\R^d}\bigg|\pv\int_{\R^d\setminus\{x\}}\nabla g_s(x-y)d(\mu_N^t-\mu^t)(y)\bigg|^2~d\mu_N^t(x),
\end{align*}
and the result~\eqref{eq:boundaff0} then follows from~\eqref{eq:rewriteDelort0} and~\eqref{eq:derENt0r}, choosing $\e>0$ small enough (depending on $t$).

\bigskip
\noindent{\it Step 3: Modification with $\eta$-approximations.}
In the definition~\eqref{eq:TNt} of $T_N^t$, the diagonal terms were excluded. In order to apply inequality $2|ab|\le a^2+b^2$ to $T_N^t$ as in the proof of Lemmas~\ref{lem:proofsmooth} and~\ref{lem:proofsmooth2}, we would need to add these diagonal terms explicitly. Moreover, $\eta$-approximations then become needed in order to avoid the divergence of the corresponding diagonal terms that will appear after application of the above-mentioned inequality. More precisely, we prove in this step
\begin{align}\label{eq:boundaff-20b}
\partial_t\Ec_N(t)&\lesssim_t\int_{(\R^d\setminus\B_N^t(R))\times\R}|\xi|^\gamma|\nabla (h_{N,\eta}^t-h^t)|^2\\
&\qquad+\frac1{N^2}\sum_{i=1}^N\int_{(\R^d\setminus\B_N^t(R))\times\R}|\xi|^\gamma|\nabla v^t(x)||\nabla g_s(x-x_{i,N}^t,\xi)|^2dxd\xi+o_{R}(1)+o_\eta^{(N,R)}(1).\nonumber
\end{align}

By the choice of $v^t$ to be constant on each ball of the collection $\B_N^t(R)$ and the bound on $\nabla v^t$, \eqref{eq:boundaff0} becomes
\begin{align}\label{eq:newbounddtENballs}
\partial_t\Ec_N(t)\lesssim_t\int_{(\R^d\setminus\B_N^t(R))\times\R}|\xi|^\gamma|\nabla v^t(x)||T_N^t|+o_R(1).
\end{align}
Denote for simplicity
\[H_N^t(x,\xi):=(h_N^t-h^t)(x,\xi),\qquad H_{N,\eta}^t(x,\xi):=(h_{N,\eta}^t-h^t)(x,\xi),\]
and define, for all $1\le k\le d+1$ and all $1\le l\le d$, $T_{N,\eta}^{t;k,d+1}=0$ and
\[T_{N,\eta}^{t;kl}(x,\xi):=2 \partial_kH_{N,\eta}(x,\xi) \partial_lH_{N,\eta}(x,\xi)-\delta_{kl}|\nabla H_{N,\eta}^t(x,\xi)|^2.\]
For all $x$ with $d(x,\{x_{i,N}^t\}_{i=1}^N)>\eta$, we note that
\begin{align}\label{eq:HNetaequiv}
\nabla H_{N,\eta}^t(x,\xi)=\frac1N\sum_{i=1}^N\nabla g_{s,\eta}(x-x_{i,N}^t)-\nabla h^t(x)=\frac1N\sum_{i=1}^N\nabla g_{s}(x-x_{i,N}^t)-\nabla h^t(x)=\nabla H_N^t(x,\xi).
\end{align}
Also noting that definition~\eqref{eq:defGs} may be rewritten as
%\begin{align*}
%\frac1{N^2}\sum_{i=1}^NG_s^{kl}(x,\xi;x_{i,N}^t,x_{i,N}^t)&=\iint_{D}G_s^{kl}(x,\xi;y,z)d(\mu_N^t-\mu^t)(y)d(\mu_N^t-\mu^t)(z),
%\end{align*}
\begin{align*}
\frac1{N^2}\sum_{i=1}^NG_s^{kl}(x,\xi;x_{i,N}^t,x_{i,N}^t)&=2\iint_{D}\partial_k g_s(x-y;\xi)\partial_l g_s(x-z;\xi)d(\mu_N^t-\mu^t)(y)d(\mu_N^t-\mu^t)(z)\\
&\qquad-\delta_{kl}\iint_{D}\nabla g_s(x-y;\xi)\cdot\nabla g_s(x-z;\xi)d(\mu_N^t-\mu^t)(y)d(\mu_N^t-\mu^t)(z),
\end{align*}
definition~\eqref{eq:TNt} yields
\begin{align*}
&T_N^{t;kl}(x,\xi)+\frac1{N^2}\sum_{i=1}^NG_s^{kl}(x,\xi;x_{i,N}^t,x_{i,N}^t)\\
=~&2\int_{\R^d}\int_{\R^d}\partial_k g_s(x-y;\xi)\partial_l g_s(x-z;\xi)d(\mu_N^t-\mu^t)(y)d(\mu_N^t-\mu^t)(z)\\
&\qquad-\delta_{kl}\int_{\R^d}\int_{\R^d}\nabla g_s(x-y;\xi)\cdot\nabla g_s(x-z;\xi)d(\mu_N^t-\mu^t)(y)d(\mu_N^t-\mu^t)(z)\\
=~&2\partial_kH_N^t(x,\xi)\partial_l H_N^t(x,\xi)-\delta_{kl}|\nabla H_N^t(x,\xi)|^2.
\end{align*}
Combining this with~\eqref{eq:HNetaequiv} yields, for all $1\le k\le d+1$, all $1\le l\le d$, and all $x$ with $d(x,\{x_{i,N}^t\}_{i=1}^N)>\eta$,
\begin{align*}
T_N^{t;kl}(x,\xi)+\frac1{N^2}\sum_{i=1}^NG_s^{kl}(x,\xi;x_{i,N}^t,x_{i,N}^t)&=2\partial_kH_{N,\eta}^t(x,\xi)\partial_l H_{N,\eta}^t(x,\xi)-\delta_{kl}|H_{N,\eta}^t(x,\xi)|^2=T_{N,\eta}^{t;kl}(x,\xi).
\end{align*}
From~\eqref{eq:newbounddtENballs}, we then deduce, for all $\eta>0$ small enough such that $\bigcup_{i=1}^NB(x_{i,N}^t,\eta)\subset\B_N^t(R)$,
\begin{align*}
\partial_t\Ec_N(t)\lesssim_t \int_{(\R^d\setminus\B_N^t(R))\times\R}|\xi|^\gamma|T_{N,\eta}^t|+\frac1{N^2}\sum_{i=1}^N\int_{(\R^d\setminus\B_N^t(R))\times\R}|\xi|^\gamma|\nabla v^t(x)||G_s(x,\xi;x_{i,N}^t,x_{i,N}^t)|+o_R(1).
\end{align*}
The result~\eqref{eq:boundaff-20b} then follows, using inequality $2|ab|\le a^2+b^2$ in the form of
\[|T_{N,\eta}^t|\lesssim |\nabla(h_{N,\eta}^t-h^t)|^2,\qquad|G_s(x,\xi;x_{i,N}^t,x_{i,N}^t)|\lesssim |\nabla g_s(x-x_{i,N}^t,\xi)|^2.\]

\bigskip
\noindent{\it Step 4: Conclusion.}
In this step, we show that
\begin{align}\label{eq:conclinequfond}
\partial_t\Ec_N(t)&\lesssim_t \Ec_N(t)+\bigg(\frac{g_s(\eta)}N-\int_{\B_N^t(R)\times\R}|\xi|^\gamma|\nabla h_{N,\eta}^t|^2\bigg)\\
&\hspace{3cm}+\frac1{N^2}\sum_{i=1}^Ng_s^+(d(x_{i,N}^t,\partial\B_N^t(R)))+o_R(1)+o_{\eta}^{(N,R)}(1).\nonumber
\end{align}
The statement of Proposition~\ref{prop:MFL} immediately follows from this inequality, with the suitable choice of $R=R_N^t$, together with a simple Gronwall argument.

By Lemma~\ref{lem:linkENeta0}, inequality~\eqref{eq:boundaff-20b} may be rewritten as follows:
\begin{align*}
\partial_t\Ec_N(t)&\lesssim_t \Ec_N(t)+\frac{g_s(\eta)}N-\int_{\B_N^t(R)\times\R}|\xi|^\gamma|\nabla (h_{N,\eta}^t-h^t)|^2\\
&\qquad+\frac1{N^2}\sum_{i=1}^N\int_{(\R^d\setminus\B_N^t(R))\times\R}|\xi|^\gamma|\nabla v^t(x)||\nabla g_s(x-x_{i,N}^t,\xi)|^2dxd\xi+o_{R}(1)+o_\eta^{(N,R)}(1),
\end{align*}
or equivalently, expanding the square,
\begin{align}\label{eq:lastineqstepconcl}
\partial_t\Ec_N(t)&\lesssim_t \Ec_N(t)+\frac{g_s(\eta)}N-\int_{\B_N^t(R)\times\R}|\xi|^\gamma|\nabla h_{N,\eta}^t|^2-\int_{\B_N^t(R)\times\R}|\xi|^\gamma|\nabla h^t|^2+2\int_{\B_N^t(R)\times\R}|\xi|^\gamma \nabla h_{N,\eta}^t\cdot\nabla h^t\nonumber\\
&\qquad+\frac1{N^2}\sum_{i=1}^N\int_{(\R^d\setminus\B_N^t(R))\times\R}|\xi|^\gamma|\nabla v^t(x)||\nabla g_s(x-x_{i,N}^t,\xi)|^{2}dxd\xi+o_R(1)+o_{\eta}^{(N,R)}(1).
\end{align}
The last term in the first line of~\eqref{eq:lastineqstepconcl} is easily estimated as follows, using the notation~\eqref{def:smallballs} for the union $\B_N^t(R)$ of small balls,
\begin{align}
\bigg|\int_{\B_N^t(R)\times\R}|\xi|^\gamma\nabla h_{N,\eta}^t\cdot\nabla h^t\bigg|&\lesssim\|\nabla h^t\|_{\Ld^\infty}\frac1N\sum_{i=1}^N\int_{\B_N^t(R)\times\R}|\xi|^\gamma|(x-x_{i,N}^t,\xi)|^{-s-1}dxd\xi\nonumber\\
&\lesssim_t\frac1N\sum_{i=1}^N\int_{\B_N^t(R)}|x-x_{i,N}^t|^{1-d}dx\nonumber\\
&\lesssim\sum_{m=1}^{M_N^t}\int_{|x|\le 2r_{m,N}^t}|x|^{1-d}dx\lesssim\sum_{m=1}^{M_N^t}r_{m,N}^t= R,\label{eq:estimcroise}
\end{align}
while the term in the second line of~\eqref{eq:lastineqstepconcl} is, in the case $s>0$,
\begin{align}\label{eq:bound2concl}
&\frac1{N^2}\sum_{i=1}^N\int_{(\R^d\setminus\B_N^t(R))\times\R}|\xi|^\gamma|\nabla v^t(x)||\nabla g_s(x-x_{i,N}^t,\xi)|^{2}dxd\xi\nonumber\\
\lesssim_t~&\frac1{N^2}\sum_{i=1}^N\int_{(\R^d\setminus\B_N^t(R))\times\R}|\xi|^\gamma|(x-x_{i,N}^t,\xi)|^{-2(s+1)}dxd\xi\nonumber\\
\lesssim~~&\frac1{N^2}\sum_{i=1}^N\int_{\R^d\setminus\B_N^t(R)}|x-x_{i,N}^t|^{-d-s}dx\lesssim\frac1{N^2}\sum_{i=1}^Nd(x_{i,N}^t,\partial\B_N^t(R))^{-s}.
\end{align}
In the case $s=0$, $d=1$ (so $\gamma=0$), denoting $\rho_{i,N}^t:=d(x_{i,N}^t,\partial\B_N^t(R))$, and applying the Hölder inequality with $1/p+1/p'=1$, where $p<\infty$ is chosen in such a way that $\|\nabla v^t\|_{\Ld^p}\lesssim_t1$, the term in the second line of~\eqref{eq:lastineqstepconcl} is
\begin{align*}
&\frac1{N^2}\sum_{i=1}^N\int_{(\R\setminus\B_N^t(R))\times\R}|\nabla v^t(x)||\nabla g_0(x-x_{i,N}^t,\xi)|^{2}dxd\xi\\
\lesssim~~&\frac1{N^2}\sum_{i=1}^N\int_{\R\setminus\B_N^t(R)}|\nabla v^t(x)||x-x_{i,N}^t|^{-1} dx\\
\lesssim~~&\frac1{N^2}\sum_{i=1}^N\int_{\rho_{i,N}^t<|x-x_{i,N}^t|\le1}|\nabla v^t(x)||x-x_{i,N}^t|^{-1} dx+\frac1{N^2}\sum_{i=1}^N\int_{|x-x_{i,N}^t|>1}|\nabla v^t(x)||x-x_{i,N}^t|^{-1} dx\\
\lesssim_t~&\frac1{N^2}\sum_{i=1}^N\int_{\rho_{i,N}^t<|x-x_{i,N}^t|\le1}|x-x_{i,N}^t|^{-1} dx+\frac1{N^2}\sum_{i=1}^N\bigg(\int_{|x-x_{i,N}^t|>1}|x-x_{i,N}^t|^{-p'} dx\bigg)^{1/p'},
\end{align*}
and hence, by the choice $p'>1$,
\begin{align}\label{eq:bound2bisconcl}
\frac1{N^2}\sum_{i=1}^N\int_{(\R\setminus\B_N^t(R))\times\R}|\nabla v^t(x)||\nabla g_0(x-x_{i,N}^t,\xi)|^{2}dxd\xi&\lesssim_t\frac1{N^2}\sum_{i=1}^N(-\,0\wedge\log\rho_{i,N}^t)+N^{-1}.
\end{align}
The result~\eqref{eq:conclinequfond} then follows from inequality~\eqref{eq:lastineqstepconcl} together with~\eqref{eq:estimcroise} and with~\eqref{eq:bound2concl} or~\eqref{eq:bound2bisconcl}.
\end{proof}

\subsection{Bypass of tightness issues}
Assuming that $\Ec_N(0)\le o_N(1)$, Proposition~\ref{prop:MFL} yields $\Ec_N(t)\le C_t o_N(1)$ for all $t\in[0,T]$. If we know that the sequence $(\mu_N^t)_N$ is tight, Lemma~\ref{lem:propmodenergy} then allows us to conclude with the desired convergence $\mu_N^t\cvf*\mu^t$, while tightness can easily be checked under the additional assumption that the initial measures $\mu_N^\circ$'s are well localized in the sense that $\limsup_N\int|x|^2d\mu_N^\circ<\infty$. However, in the spirit of~\cite[Section~4.3.5]{Serfaty-15}, the following refinement of Lemma~\ref{lem:propmodenergy} shows that much more information may be directly extracted from the fact that $\Ec_N(t)\le C_to_N(1)$, so that in particular tightness is obtained a posteriori without any additional assumption.

%In order to apply Lemma~\ref{lem:propmodenergy} to deduce the desired convergence $\mu_N^t\cvf*\mu^t$ from Proposition~\ref{prop:MFL} and from the assumption that $\Ec_N(0)\le o_N(1)$, we would first need to prove tightness of the sequence $(\mu_N^t)_N$. This can easily be done under the additional assumption that the initial measures $\mu_N^\circ$'s are well localized, in the sense that $\limsup_N\int|x|^2d\mu_N^\circ<\infty$. Here we show that this is actually not needed, and that much more information can indeed be extracted from the conclusion of Proposition~\ref{prop:MFL}. [[...refinement of Lemma~\ref{lem:propmodenergy}, extract more precise information of a statement like $\limsup_N\Ec_N(t)\le0$!]]

\begin{cor}\label{cor:convprop}
Let the assumptions of Proposition~\ref{prop:MFL} prevail. Also assume that $\Ec_N(0)\le o_N(1)$. Then, for all $t\in[0,T]$, we have $\nabla h_N^t\to\nabla h^t$ in $\Ld^p_{\loc}(\R^d;\Ld^2(\R,|\xi|^\gamma d\xi))$ for all $1\le p<2d/(s+d)$, and hence $\mu_N^t\cvf*\mu^t$. In particular, $(\mu_N^t)_N$ is tight and Lemma~\ref{lem:propmodenergy} then implies the convergence of the energy.
\end{cor}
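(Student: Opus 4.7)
The strategy is to upgrade the energy decay $\Ec_N(t) = o_N(1)$, which follows from Proposition~\ref{prop:MFL} under the assumption $\Ec_N(0) = o_N(1)$, to strong convergence of the gradients in $\Ld^p_{\loc}(\R^d; \Ld^2(\R, |\xi|^\gamma d\xi))$. To this end, use the ball construction $\B_N^t(R_N^t)$ with $R_N^t \to 0$ from Proposition~\ref{prop:MFL}, and pick a truncation scale $\eta_N \to 0$ small enough that $B(x_{i,N}^t, \eta_N) \subset \B_N^t(R_N^t)$ for every $i$ (which is feasible under conditions~\eqref{eq:cond1}--\eqref{eq:cond2}, as dissected in Remark~\ref{rem:conditions}). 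Split
\[
\nabla h_N^t - \nabla h^t = \underbrace{\nabla(h_N^t - h_{N,\eta_N}^t)}_{\text{supported in } \B_N^t(R_N^t) \times \R} + \nabla(h_{N,\eta_N}^t - h^t),
\]
and estimate the $\Ld^p(B_R; \Ld^2_\xi(|\xi|^\gamma))$ norm on each of $B_R \setminus \B_N^t(R_N^t)$ and $B_R \cap \B_N^t(R_N^t)$.

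On the complement $B_R \setminus \B_N^t(R_N^t)$ only the second piece contributes, and by H\"older in $x$,
\[
\|\nabla h_N^t - \nabla h^t\|_{\Ld^p(B_R \setminus \B_N^t(R_N^t); \Ld^2_\xi)} \;\lesssim_{R,p}\; \sqrt{\Ec_{N,\eta_N}(t)};
\]
Lemma~\ref{lem:linkENeta0} together with condition~\eqref{eq:cond1} (precisely designed to force $g_s(\eta_N)/N \to 0$ in the admissible regime) and a diagonal extraction of $\eta_N$ give $\Ec_{N,\eta_N}(t) \to 0$. On $B_R \cap \B_N^t(R_N^t)$, a direct scaling in the $\xi$-variable yields the pointwise identity $\|\nabla g_s(x-y, \cdot)\|_{\Ld^2_\xi(|\xi|^\gamma)} \simeq |x-y|^{-(s+d)/2}$, so by Minkowski
\[
\|\nabla h_N^t(x, \cdot)\|_{\Ld^2_\xi(|\xi|^\gamma)} \;\lesssim\; \frac{1}{N} \sum_i |x - x_{i,N}^t|^{-(s+d)/2};
\]
the singularity $|x|^{-(s+d)/2}$ has integrability threshold exactly $p < 2d/(s+d)$, so integrating term by term over the radius-$r_m$ balls of $\B_N^t(R_N^t)$ and summing via the ball structure shows that this contribution vanishes as $R_N^t \to 0$. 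The smooth term $\nabla h^t$ is bounded by Lemma~\ref{lem:hbounded}, hence contributes negligibly since $|\B_N^t(R_N^t)| \to 0$.

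Combining the two estimates yields $\nabla h_N^t \to \nabla h^t$ in $\Ld^p_{\loc}(\R^d; \Ld^2(\R, |\xi|^\gamma d\xi))$. To deduce $\mu_N^t \cvf* \mu^t$, take $\phi \in C_c^\infty(\R^d)$, pick a cut-off $\chi \in C_c^\infty(\R)$ with $\chi(0) = 1$, and integrate by parts against $-\Div(|\xi|^\gamma \nabla h_N^t) = \mu_N^t \delta_{\R^d \times \{0\}}$ to obtain
\[
\langle \mu_N^t - \mu^t, \phi\rangle = \int_{\R^d \times \R} |\xi|^\gamma \nabla(\phi\chi) \cdot \nabla(h_N^t - h^t) \, dx \, d\xi,
\]
whose right-hand side vanishes by Cauchy--Schwarz in $\xi$ and H\"older in $x$ (note that $\|\nabla(\phi\chi)(x,\cdot)\|_{\Ld^2_\xi(|\xi|^\gamma)}$ is bounded with compact $x$-support, since $\chi$ is compactly supported and $\gamma > -1$). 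Since the distributional limit $\mu^t$ is a probability measure (Proposition~\ref{prop:existence}), tightness of $(\mu_N^t)_N$ follows by testing against functions identically one on large compact sets, and Lemma~\ref{lem:propmodenergy} then produces the convergence of energies. The main delicate point throughout is the compatibility of the truncation scale $\eta_N$ with the ball radius $R_N^t$, for which conditions~\eqref{eq:cond1}--\eqref{eq:cond2} of Proposition~\ref{prop:MFL} are the essential input.
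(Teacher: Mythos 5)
There is a genuine gap in the estimate outside the small balls. You bound
\[
\|\nabla h_N^t - \nabla h^t\|_{\Ld^p(B_R\setminus\B_N^t;\Ld^2_\xi)}\;\lesssim_{R,p}\;\sqrt{\Ec_{N,\eta_N}(t)}
\]
and then claim $\Ec_{N,\eta_N}(t)\to0$ by a diagonal extraction in $\eta$, asserting that condition~\eqref{eq:cond1} is ``precisely designed to force $g_s(\eta_N)/N\to0$''. That reading of~\eqref{eq:cond1} is wrong: for each fixed $N$, $g_s(\eta)/N\to\infty$ as $\eta\downarrow0$, and condition~\eqref{eq:cond1} does \emph{not} tame $g_s(\eta)/N$ on its own; rather it asserts that the concentrated energy inside the balls, $\int_{\B_N^t\times\R}|\xi|^\gamma|\nabla h_{N,\eta}^t|^2$, also diverges and \emph{compensates} $g_s(\eta)/N$, so that their \emph{difference} has a nonnegative $\liminf$. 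Since the rate of the $o_\eta^{(N)}(1)$ correction in Lemma~\ref{lem:linkENeta0} is not quantified, there is no diagonal extraction $\eta_N\to0$ that simultaneously keeps $g_s(\eta_N)/N\to0$, keeps $o_{\eta_N}^{(N)}(1)\to0$, and respects the constraint $\eta_N<\eta_N'/2$ (with $\eta_N'$ the minimal inter-particle distance, which is not bounded below in terms of $N$ under the paper's assumptions). In fact $\Ec_{N,\eta}(t)$ need not go to zero at all: only the part supported \emph{outside} $\B_N^t$ does. Bounding the outer piece by the full $\eta$-approximate energy throws away precisely the cancellation that the argument requires.

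The way around this, as the paper does, is to write the outer integral as $\Ec_{N,\eta}(t)$ \emph{minus} the inner integral over $\B_N^t\times\R$, expand $|\nabla(h_{N,\eta}^t-h^t)|^2$ inside, and observe that the $-\int_{\B_N^t\times\R}|\xi|^\gamma|\nabla h_{N,\eta}^t|^2$ term cancels the $g_s(\eta)/N$ contribution from Lemma~\ref{lem:linkENeta0} thanks to condition~\eqref{eq:cond1}; the cross term is controlled by $R_N^t$ as in~\eqref{eq:estimcroise}, and one only then lets $\eta\downarrow0$ at fixed $N$ using $\nabla h_{N,\eta}^t\to\nabla h_N^t$ in the sense of distributions. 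Your Step~2 inside the balls and your Step~3 (distributional conclusion via $-\Div(|\xi|^\gamma\cdot)$) are essentially correct, though the ``summing via the ball structure'' is vague and should be replaced by the H\"older trick exploiting $|\B_N^t|\to0$, which is what gives the $o_N(1)$ there.
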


\begin{proof}
By assumption, Proposition~\ref{prop:MFL} yields $\Ec_N(t)\lesssim_t o_N(1)$. We split the proof into three steps.

\medskip
\noindent{\it Step 1: Strong convergence outside small balls.}
In this step, we prove
\begin{align}\label{eq:convBRBN0}
\iint_{(\R^d\setminus \B_N^t)\times\R}|\xi|^{\gamma}|\nabla (h_{N}^t-h^t)|^2&\lesssim_t o_N(1),
\end{align}
and hence, for any $1\le p\le 2$, the Hölder inequality implies for all $R>0$
\begin{align}\label{eq:convBRBN}
\int_{B_R\setminus \B_N^t}\bigg(\int_\R|\xi|^{\gamma}|\nabla (h_{N}^t-h^t)|^2\bigg)^{p/2}&\lesssim R^{d(1-p/2)}\bigg(\iint_{(\R^d\setminus \B_N^t)\times\R}|\xi|^{\gamma}|\nabla (h_{N}^t-h^t)|^2\bigg)^{p/2}\lesssim_{R,t}o_N(1).
\end{align}

Applying Lemma~\ref{lem:linkENeta0} and expanding the square, we may decompose as follows the $\Ld^2(\R^d\times\R,|\xi|^\gamma dxd\xi)$-norm of $\nabla(h_{N,\eta}^t-h^t)$ outside the small balls $\B_N^t$:
\begin{align*}
\iint_{(\R^d\setminus\B_N^t)\times\R}|\xi|^\gamma|\nabla (h_{N,\eta}^t-h^t)|^2&=\Ec_{N,\eta}(t)-\iint_{\B_N^t\times\R}|\xi|^\gamma|\nabla (h_{N,\eta}^t-h^t)|^2\\
&=\Ec_{N}(t)+\frac{g_s(\eta)}N-\iint_{\B_N^t\times\R}|\xi|^\gamma|\nabla (h_{N,\eta}^t-h^t)|^2+o_{\eta}^{(N)}(1)\\
&=\Ec_{N}(t)+\frac{g_s(\eta)}N-\iint_{\B_N^t\times\R}|\xi|^\gamma|\nabla h_{N,\eta}^t|^2-\iint_{\B_N^t\times\R}|\xi|^\gamma|\nabla h^t|^2\\
&\qquad+2\iint_{\B_N^t\times\R}|\xi|^\gamma\nabla h_{N,\eta}^t\cdot \nabla h^t+o_{\eta}^{(N)}(1).
\end{align*}
Applying Proposition~\ref{prop:MFL} in the form of $\Ec_N(t)\lesssim_t o_N(1)$, and using assumption~\eqref{eq:cond1}, this turns into
\begin{align*}
\iint_{(\R^d\setminus\B_N^t)\times\R}|\xi|^\gamma|\nabla (h_{N,\eta}^t-h^t)|^2&\le2\iint_{\B_N^t\times\R}|\xi|^\gamma\nabla h_{N,\eta}^t\cdot \nabla h^t+C_to_N(1)+o_{\eta}^{(N)}(1).
\end{align*}
Now arguing just as in~\eqref{eq:estimcroise}, we find
\begin{align*}
\bigg|\iint_{\B_N^t\times\R}|\xi|^\gamma\nabla h_{N,\eta}^t\cdot \nabla h^t\bigg|&\lesssim_t R_N^t\lesssim_to_N(1),
\end{align*}
and hence
\begin{align*}
\iint_{(\R^d\setminus\B_N^t)\times\R}|\xi|^\gamma|\nabla (h_{N,\eta}^t-h^t)|^2&\lesssim_t o_N(1)+o_{\eta}^{(N)}(1).
\end{align*}
Passing to the limit $\eta\downarrow0$ in this inequality, and noting that $\nabla h_{N,\eta}^t\to\nabla h_N^t$ in the distributional sense, the result~\eqref{eq:convBRBN0} follows.

\bigskip
\noindent{\it Step~2: Neglecting the contribution inside small balls.}
The contribution inside the small balls $\B_N^t$ is of course infinite, since $\nabla h_N^t$ does not belong to $\Ld^2(\R^d\times\R,|\xi|^\gamma dxd\xi)$. However, we show that it is small in $\Ld^p_{\loc}(\R^d;\Ld^2(\R,|\xi|^\gamma d\xi))$ for $p$ small enough. More precisely, for any $1\le p<2d/(s+d)$, we show that we have for all $R>0$
\begin{align}\label{eq:convBRinBN}
\int_{B_R\cap\B_N^t}\bigg(\int_\R|\xi|^\gamma|\nabla (h_N^t-h^t)|^2\bigg)^{p/2}&\lesssim_{R,t}o_N(1).
\end{align}

Decomposing $\nabla h_N^t(x)=\frac1N\sum_{i=1}^N\nabla g_s(x-x_{i,N}^t,\xi)$, the triangle inequality yields
\begin{align*}
\bigg(\int_{B_R\cap\B_N^t}\bigg(\int_\R|\xi|^\gamma|\nabla h_N^t|^2\bigg)^{p/2}\bigg)^{1/p}&\lesssim\frac1N\sum_{i=1}^N\bigg(\int_{B_R}\bigg(\int_\R|\xi|^\gamma|(x-x_{i,N}^t,\xi)|^{-2(s+1)}d\xi\bigg)^{p/2}dx\bigg)^{1/p}.
\end{align*}
A direct computation of the integral over $\xi$ yields
\begin{align*}
\bigg(\int_{B_R\cap\B_N^t}\bigg(\int_\R|\xi|^\gamma|\nabla h_N^t|^2\bigg)^{p/2}\bigg)^{1/p}&\lesssim\frac1N\sum_{i=1}^N\bigg(\int_{B_R}|x-x_{i,N}^t|^{\frac p2(\gamma+1-2(s+1))}dx\bigg)^{1/p}.
\end{align*}
As for each $i$ the integral over $x\in B_R$ is clearly bounded above by the same integral over $x\in B_R(x_{i,N}^t)$, we obtain
\begin{align*}
\bigg(\int_{B_R\cap\B_N^t}\bigg(\int_\R|\xi|^\gamma|\nabla h_N^t|^2\bigg)^{p/2}\bigg)^{1/p}&\lesssim\bigg(\int_{B_R}|x|^{\frac p2(\gamma+1-2(s+1))}dx\bigg)^{1/p},
\end{align*}
and hence, for any $1\le p<2d/(s+d)$,
\begin{align*}
\int_{B_R\cap\B_N^t}\bigg(\int_\R|\xi|^\gamma|\nabla h_N^t|^2\bigg)^{p/2}&\lesssim\int_{B_R}|x|^{\frac p2(\gamma+1-2(s+1))}dx=\int_{B_R}|x|^{-(s+d)p/2}dx\lesssim_R1,
\end{align*}
Now, for any $1\le p<2d/(s+d)$, choosing any $p<q<2d/(s+d)$, the Hölder inequality yields
\begin{align*}
\int_{B_R\cap\B_N^t}\bigg(\int_\R|\xi|^\gamma|\nabla h_N^t|^2\bigg)^{p/2}&\le |\B_N^t|^{1-p/q}\bigg(\int_{B_R\cap\B_N^t}\bigg(\int_\R|\xi|^\gamma|\nabla h_N^t|^2\bigg)^{q/2}\bigg)^{p/q}\lesssim_{R}|\B_N^t|^{1-p/q}\lesssim_{t}o_N(1).
\end{align*}
The result~\eqref{eq:convBRinBN} follows from this and from the Hölder inequality in the form
\begin{align*}
\int_{B_R\cap\B_N^t}\bigg(\int_\R|\xi|^\gamma|\nabla h^t|^2\bigg)^{p/2}\le|\B_N^t|^{1-p/2}\bigg(\iint_{B_R\times\R}|\xi|^\gamma|\nabla h^t|^2\bigg)^{p/2}&\lesssim_{R,t} o_N(1).
\end{align*}

\bigskip
\noindent{\it Step~3: Conclusion.}

Combining~\eqref{eq:convBRBN} and~\eqref{eq:convBRinBN}, for any $1\le p<2d/(s+d)$, we may conclude, for all $R>0$,
\[\int_{B_R}\bigg(\int_\R|\xi|^\gamma|\nabla (h_N^t-h^t)|^2\bigg)^{p/2}\lesssim_{R,t}o_N(1).\]
This proves $\nabla h_N^t\to\nabla h^t$ in $\Ld^p_{\loc}(\R^d;\Ld^2(\R,|\xi|^\gamma d\xi))$ for any $1\le p<2d/(s+d)$. Applying the operator $-\Div(|\xi|^\gamma\cdot)$ to both sides, we deduce $\mu_N^t\to\mu^t$ in the distributional sense on $\R^d\times\R$, so the result is proven.
\end{proof}

\subsection{Ball construction}\label{chap:ball0}
In this section, we make the heuristics of Remark~\ref{rem:conditions} rigorous, showing that for $0\le s<1$ the collection $\B_N^t(R_N^t)$ can indeed be chosen with $R_N^t\to0$ in such a way that both conditions~\eqref{eq:cond1} and~\eqref{eq:cond2} are satisfied.

Let us first describe the construction that we will use for the collection $\B_N^t(R)$, $R>0$. This is precisely the same construction as the one used e.g. in~\cite[Chapter~4]{SS-book}, which was introduced by~\cite{Sandier-98,Jerrard-99} for the analysis of the Ginzburg-Landau vortices. We first consider $N$ disjoint small balls centered at the points $x_{i,N}^t$'s with equal radii (smaller than $\eta_N/2$), and we grow their radii by the same multiplicative factor. At some point during this growth process, two (or more) balls may become tangent to one another. We then merge them into a bigger ball: if tangent balls are of the form $B(a_i,r_i)$, we merge them into $B(\sum_ia_ir_i/\sum_ir_i,\sum_ir_i)$. If the resulting ball intersects other balls, we proceed to another merging, and so on, until all the balls are again disjoint. Then again we grow all the resulting radii by a multiplicative factor, etc., and we stop when the total radius $R$ is the one desired.

As we will see, condition~\eqref{eq:cond2} is easily checked directly from the construction above, so we may focus on the validity of condition~\eqref{eq:cond1}.
Hence we need to study integrals of the form $\int_{\B_N^t(R)\times\R}|\xi|^\gamma|\nabla h_{N,\eta}^t|^2$ for $R>0$. For that purpose, the basic tool is then the following crucial lower bound, which is a refinement of~\cite[Lemma~2.2]{Petrache-Serfaty-14}. In the sequel, for $x\in\R^d$ and $t>0$, we denote by $B'(x,t)$ the ball of radius $t$ centered at $(x,0)$ in $\R^d\times\R$, and we set $B'_t:=B'(0,t)$.

\begin{lem}[Embryo of a lower bound]\label{lem:comput}
Let $R>r$, let $(z_i)_{i=1}^k$ be a collection of points inside the ball $B_r$, and let $(z_{k+i})_{i=1}^{l}$ be a collection of points outside the ball $B_R$. Then
\begin{align}\label{eq:conjgs}
\int_{B_R'\setminus B_r'}|\xi|^\gamma\bigg|\sum_{i=1}^{k+l}\nabla g_s(x-z_i,\xi)\bigg|^2dxd\xi\ge k^2(g_s(r)-g_s(R)).
\end{align}
The same remains true if point charges are smeared out on small spheres around them, i.e. if $g_s$ is replaced by $g_{s,\eta}$ with $\eta<d(\{z_i\}_{i=1}^{k+l},B_R\setminus B_r)$. In particular, for any $z_1$, any $R>\eta>0$, and any collection $(z_i)_{i=2}^{1+l}$ of points outside the ball $B(z_1,R+\eta)$,
\begin{align}\label{eq:conjgs2}
\int_{B'(z_1,R)}|\xi|^\gamma\bigg|\sum_{i=1}^{1+l}\nabla g_{s,\eta}(x-z_i,\xi)\bigg|^2dxd\xi\ge g_s(\eta)-g_s(R).
\end{align}
\end{lem}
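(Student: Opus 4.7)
The plan is to use the standard trick for this kind of lower bound: a coarea decomposition by spheres $\partial B'_\rho$ for $\rho\in(r,R)$, combined on each sphere with a Cauchy--Schwarz inequality applied to the weighted normal flux of $u:=\sum_{i=1}^{k+l}g_s(\,\cdot-z_i,\,\cdot\,)$. First, I would note the harmonicity: since $-\Div(|\xi|^\gamma\nabla g_s(\,\cdot-z,\,\cdot\,))=\delta_{(z,0)}$ on $\R^d\times\R$ with a unit factor (by choice of $c_{d,s}$), the function $u$ satisfies $-\Div(|\xi|^\gamma\nabla u)=0$ on $(\R^d\times\R)\setminus\{(z_i,0)\}_{i=1}^{k+l}$. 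By assumption $(z_i,0)\in B'_r$ for $i\le k$ and $(z_{k+i},0)\notin B'_R$ for $i\le l$, so $u$ is weighted-harmonic on the annulus $B'_R\setminus B'_r$ and the divergence theorem applied on $B'_\rho$ for any $\rho\in(r,R)$ gives
\[
-\int_{\partial B'_\rho}|\xi|^\gamma\partial_\nu u\,dS=k.
\]

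Next, Cauchy--Schwarz on $\partial B'_\rho$ with weight $|\xi|^\gamma$ yields
\[
k^2\le\bigg(\int_{\partial B'_\rho}|\xi|^\gamma dS\bigg)\bigg(\int_{\partial B'_\rho}|\xi|^\gamma|\partial_\nu u|^2 dS\bigg).
\]
The first factor is computed explicitly from the fundamental solution: for $s>0$, radiality of $g_s(x,\xi)=c_{d,s}^{-1}|(x,\xi)|^{-s}$ gives $\partial_\nu g_s|_{\partial B'_\rho}=-sc_{d,s}^{-1}\rho^{-s-1}$, and the identity $-\Div(|\xi|^\gamma\nabla g_s)=\delta_0$ applied on $B'_\rho$ yields $\int_{\partial B'_\rho}|\xi|^\gamma dS=c_{d,s}\rho^{s+1}/s$. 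Substituting and integrating over $\rho\in(r,R)$, together with $|\partial_\nu u|^2\le|\nabla u|^2$, gives
\[
\int_{B'_R\setminus B'_r}|\xi|^\gamma|\nabla u|^2\,dxd\xi\ge\int_r^R\frac{k^2 s}{c_{d,s}\,\rho^{s+1}}\,d\rho=\frac{k^2}{c_{d,s}}\bigl(r^{-s}-R^{-s}\bigr)=k^2\bigl(g_s(r)-g_s(R)\bigr),
\]
which is~\eqref{eq:conjgs}. The logarithmic case $s=0$, $d=1$ is handled in exactly the same way, with the surface-area constant $\int_{\partial B'_\rho}dS$ computed from $-\Delta g_0=\delta_0$ on $\R\times\R$.

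For the smeared version, the assumption $\eta<d(\{z_i\}_{i=1}^{k+l},B_R\setminus B_r)$ ensures that each sphere $\partial B(z_i,\eta)\times\{0\}$, which carries the distributional source of $g_{s,\eta}(\,\cdot-z_i,\,\cdot\,)$, lies either inside $B'_r$ (for $i\le k$) or outside $B'_R$ (for $i>k$). Hence on $B'_R\setminus B'_r$ the smeared $u$ is still weighted-harmonic, the total enclosed source in $B'_\rho$ for $\rho\in(r,R)$ is still exactly $k$, and the argument above carries through verbatim. Finally, the ``in particular'' estimate~\eqref{eq:conjgs2} follows by applying the smeared inequality after translation to $z_1$, with $r=\eta$, $k=1$, and the single inner point being $z_1$ itself (for which the smearing sphere $\partial B(z_1,\eta)$ lies on the boundary of $B'(z_1,\eta)=B'_r$ and thus carries enclosed charge~$1$).

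The routine part is the bookkeeping of normalization constants; the only real point requiring care is the verification that under the hypothesis $\eta<d(\{z_i\},B_R\setminus B_r)$ the smeared source measures are genuinely supported away from the annulus $B'_R\setminus B'_r$, which is what allows one to apply the divergence theorem on each $B'_\rho$ and identify the enclosed charge with the integer $k$.
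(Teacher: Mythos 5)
Your proof is correct and follows essentially the same route as the paper: coarea decomposition into spheres $\partial B'_\rho$, Cauchy--Schwarz in $L^2(\partial B'_\rho,|\xi|^\gamma dS)$ applied to the normal flux, identification of the enclosed charge as the integer $k$ via the divergence theorem, and integration in $\rho$. The one genuine (and pleasant) difference is how you produce the weighted surface-area constant $\int_{\partial B'_\rho}|\xi|^\gamma dS=c_{d,s}\rho^{s+1}/s$: you exploit the radiality of $g_s$ so that $\partial_\nu g_s\equiv-s\,c_{d,s}^{-1}\rho^{-s-1}$ is constant on $\partial B'_\rho$, and then read the constant off directly from the fundamental-solution identity applied on $B'_\rho$. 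The paper instead computes $c_{d,s}$ explicitly by a separate flux computation on the infinite cylinder $B_1\times\R$ (Step~1), then evaluates $\int_{\partial B'_t}|\xi|^\gamma$ by a spherical-coordinate beta-function integral, and observes the two match. Your version avoids both the cylinder computation and the beta-function evaluation, and is therefore a small streamlining; the paper's explicit formula for $c_{d,s}$ is of independent interest but not needed for the inequality itself. Your treatment of the smeared case and of \eqref{eq:conjgs2} is also correct, with the right observation that the smearing sphere $\partial B(z_1,\eta)\times\{0\}$ is fully enclosed by $B'(z_1,\rho)$ for every $\rho\in(\eta,R)$, so the flux equals $-1$ on the whole open annulus and the coarea argument applies unchanged.
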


\begin{proof}
{\it Step 1: Explicit value of $c_{d,s}$.}
We claim that the normalization constant $c_{d,s}$ for the Riesz kernel $g_s$ is given by the following formula, in terms of the beta function $\Bd(a,b)=\Gamma(a)\Gamma(b)/\Gamma(a+b)$ and of the measure $\omega_{d-1}$ of the unit sphere of dimension $d-1$,
\begin{align}\label{eq:cds}
c_{d,s}&=s\omega_{d-1} \Bd\bigg(\frac{s+2-d}2,\frac d2\bigg).
\end{align}

Integrating the equality $-\Div(|\xi|^\gamma \nabla g_s)=\delta_0$ on the infinite cylinder $C_0:=B_1\times\R$ in $\R^d\times\R$, we find by integration by parts
\begin{align*}
-1=\int_{C_0}\Div(|\xi|^\gamma \nabla g_s)=\int_{\partial C_0}|\xi|^\gamma n\cdot\nabla g_s&=\int_{-\infty}^\infty\int_{\partial B_1}|\xi|^\gamma \partial_rg_s(u,\xi)d\sigma(u)d\xi\\
&=2\omega_{d-1}\int_{0}^\infty\xi^\gamma \partial_rg_s(1,\xi)d\xi.
\end{align*}
Since by definition $g_s(x,\xi)=c_{d,s}^{-1}(|x|^{2}+|\xi|^2)^{-s/2}$, computing the radial derivative yields
\begin{align*}
c_{d,s}&=2s\omega_{d-1}\int_{0}^\infty\xi^\gamma (1+\xi^2)^{-s/2-1}d\xi=s\omega_{d-1}\int_{0}^\infty\xi^{(\gamma-1)/2} (1+\xi)^{-s/2-1}d\xi.
\end{align*}
The result~\eqref{eq:cds} then easily follows using the formula $\Bd(a,b)=\int_0^\infty t^{a-1}(1+t)^{-a-b}dt$ for all $a,b>0$.

\bigskip
\noindent{\it Step 2: Conclusion.}\nopagebreak

For all $t>0$, denote by $B_t'$ the ball of radius $t$ in $\R^d\times\R$, and also $\mu_{k,l}:=\sum_{i=1}^{k+l}\delta_{z_i}$. We may then estimate by the Cauchy-Schwarz inequality
\begin{align*}
\int_{B'_R\setminus B'_r}|\xi|^\gamma|\nabla g_s\ast \mu_{k,l}|^2&=\int_r^Rdt\int_{\partial B_t'}|\xi|^\gamma|\nabla g_s\ast \mu_{k,l}|^2\\
&\ge\int_r^Rdt \bigg(\int_{\partial B_t'}|\xi|^\gamma\bigg)^{-1}\bigg(\int_{\partial B_t'}|\xi|^\gamma n\cdot \nabla g_s\ast \mu_{k,l}\bigg)^2,
\end{align*}
where an integration by parts yields, for all $r\le t\le R$,
\begin{align*}
\int_{\partial B_t'}|\xi|^\gamma n\cdot\nabla g_s\ast\mu_{k,l}&=\int_{B_t'}\Div(|\xi|^\gamma\nabla g_s\ast\mu_{k,l})=-\mu_{k,l}(B_t)=-k,
\end{align*}
and hence
\begin{align*}
\int_{B_R'\setminus B_r'}|\xi|^\gamma|\nabla g_s\ast \mu_{k,l}|^2&\ge k^2\int_r^Rdt \bigg(\int_{\partial B_t'}|\xi|^\gamma\bigg)^{-1}.
\end{align*}
In order to compute this last integral, we use spherical coordinates:
\begin{align*}
\int_{\partial B_t'}|\xi|^\gamma&=t^{s+1}\omega_{d-1}\int_0^\pi (\sin\theta)^{d-1}|\cos\theta|^\gamma d\theta=t^{s+1}\omega_{d-1}\int_{-1}^1 (1-u^2)^{(d-2)/2}|u|^\gamma du\\
&=t^{s+1}\omega_{d-1}\int_{0}^1 (1-u)^{(d-2)/2}u^{(\gamma-1)/2} du=t^{s+1}\omega_{d-1}\Bd\bigg(\frac{s+2-d}2,\frac d2\bigg),
\end{align*}
where the last equality follows from the formula $\Bd(a,b)=\int_0^1 t^{a-1}(1-t)^{b-1}dt$ for all $a,b>0$.
By Step~1, this last expression is nothing but $t^{s+1}{c_{d,s}}/s$, so that we may conclude
\begin{align*}
\int_{B_R'\setminus B_r'}|\xi|^\gamma|\nabla g_s\ast \mu_{k,l}|^2&\ge k^2\frac{s}{c_{d,s}}\int_r^Rt^{-s-1}dt=k^2\frac{r^{-s}-R^{-s}}{c_{d,s}}=k^2(g_s(r)-g_s(R)).\qedhere
\end{align*}
\end{proof}

With this result at hand, as in~\cite[Chapter~4]{SS-book}, we may now deduce the following lower bound estimate for the energy on the balls of the collection $\B_N^t(R)$. For logarithmic interactions (thus in particular for the Ginzburg-Landau vortices, as treated in~\cite[Chapter~4]{SS-book}), a particularly simple additive structure shows up, simplifying computations a lot; here we show that the same result still holds for all $s\le1$.

\begin{prop}[Lower bound]\label{prop:lowerbound}
Let $R>0$. If $s\le1$, then, for all $0<\eta<\eta_N\wedge (R/N)$,
\begin{align}\label{eq:LB}
\int_{\B_N^t(R)\times\R}|\xi|^\gamma|\nabla h^t_{N,\eta}|^2\ge\frac1N(g_s(\eta)-g_s(R/N)).
\end{align}
\end{prop}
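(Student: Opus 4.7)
The plan is to realize the ball construction as a continuous growth process parameterized by a multiplicative factor, apply the embryo lower bound of Lemma~\ref{lem:comput} on each infinitesimal annulus produced by the growth, and assemble the resulting estimates into a closed differential inequality whose integration recovers the claim. The restriction $s\le1$ enters only at one specific combinatorial/convexity step.

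Concretely, I would parameterize by the total radius $\rho\in[N\eta,R]$: start from the configuration of $N$ disjoint balls $B(x_{i,N}^t,\eta)$ (each containing one particle, disjoint by $\eta<\eta_N$ up to a harmless slight inflation of the initial radius), grow all radii by a common multiplicative factor, and merge tangent balls as prescribed in Section~\ref{chap:ball0}. Let $\B(\rho)=\bigcup_{m}B(y_m,r_m)$ denote the family at total radius $\rho$, with $k_m$ points in ball~$m$ and $M$ balls in total. Setting
\[F(\rho):=\int_{\B(\rho)\times\R}|\xi|^\gamma|\nabla h_{N,\eta}^t|^2,\]
I would apply Lemma~\ref{lem:comput} to the annulus produced inside each ball between two consecutive merging events and sum, obtaining (after substituting $dr_m=r_m\,d\rho/\rho$ during multiplicative growth and $(-g_s'(r))r=sc_{d,s}^{-1}r^{-s}$ for $s>0$, resp. $c_{d,0}^{-1}$ for $s=0$) the infinitesimal lower bound
\[\frac{dF}{d\rho}\ge\frac{sc_{d,s}^{-1}}{N^2\rho}\sum_m k_m^2 r_m^{-s}\quad(s>0),\qquad\frac{dF}{d\rho}\ge\frac{c_{d,0}^{-1}}{N^2\rho}\sum_m k_m^2\quad(s=0).\]

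The key combinatorial lower bound---where the hypothesis $s\le1$ is used---is
\[\sum_m k_m^2 r_m^{-s}\ge \frac{N^{1+s}}{\rho^s}\quad(s>0),\qquad\sum_m k_m^2\ge N\quad(s=0).\]
The logarithmic case is immediate from $k_m\ge1$. For $0<s\le1$, Cauchy--Schwarz gives
\[N^2=\bigg(\sum_m k_m r_m^{-s/2}\cdot r_m^{s/2}\bigg)^2\le\bigg(\sum_m k_m^2 r_m^{-s}\bigg)\bigg(\sum_m r_m^s\bigg),\]
and Jensen's inequality applied to the concave map $x\mapsto x^s$ (this is where $s\le1$ matters) together with $M\le N$ (each ball carries at least one particle) yields $\sum_m r_m^s\le M^{1-s}\rho^s\le N^{1-s}\rho^s$; the claim follows. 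Feeding this back produces the clean closed inequality $dF/d\rho\ge sc_{d,s}^{-1}/(N^{1-s}\rho^{s+1})$ (resp. $c_{d,0}^{-1}/(N\rho)$ for $s=0$), which integrates explicitly.

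Integrating from $\rho=N\eta$ to $\rho=R$ and using that $F$ is non-decreasing across merging events (the integrand is non-negative and the set $\B(\rho)$ only grows when tangent balls are replaced by their merged enclosing ball) yields exactly $F(R)-F(N\eta)\ge\frac1N(g_s(\eta)-g_s(R/N))$, and the conclusion follows from $F(N\eta)\ge0$. The main technical nuisance I anticipate is not in the core estimate itself but in verifying that the smearing hypothesis $\eta<d(\{x_{i,N}^t\},B_R\setminus B_r)$ of Lemma~\ref{lem:comput} is met throughout the growth: it can fail precisely near merging instants, when a ball from another cluster sits arbitrarily close to the outer sphere of the annulus under consideration. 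Since such instants form a finite set of parameter values, the differential inequality is unaffected and its integrated form survives by a standard limiting argument (e.g.\ applying the lemma on slightly shrunken annuli and passing to the limit).
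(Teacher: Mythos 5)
Your argument is correct, but it is a genuinely different route from the paper's. The paper proves, by induction on the discrete sequence of growth and merging events, a per-ball estimate: every ball $B(y,r)$ of $\B_N^t(R)$ containing $n$ particles satisfies $\int_{B'(y,r)}|\xi|^\gamma|\nabla h_{N,\eta}^t|^2\ge n N^{-2}(g_s(\eta)-g_s(R/N))$; this is additive at mergers, and at an expansion $B(y,r)\to B(y,\alpha r)$ the annular contribution $n^2(g_s(r)-g_s(\alpha r))$ is rewritten via the explicit construction relation $r=nR/N$ as $n^{2-s}(g_s(R/N)-g_s(\alpha R/N))$, whereupon $s\le1$ enters through $n^{2-s}\ge n$. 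You instead set up a continuous Gronwall-type inequality $dF/d\rho\ge (sc_{d,s}^{-1}/N^2\rho)\sum_m k_m^2 r_m^{-s}$ in the total-radius parameter, and concentrate the entire role of $s\le1$ into the single combinatorial inequality $\sum_m k_m^2 r_m^{-s}\ge N^{1+s}\rho^{-s}$, proved by Cauchy--Schwarz, Jensen for the concave map $x\mapsto x^s$, and the crude bound $M\le N$. Your combinatorial estimate is cleaner and more robust in that it never uses the structural relation $r_m=k_m\rho/N$ (so it would survive other choices of ball construction with the same total radius); specialized to the paper's construction it collapses to $\sum_m k_m^{2-s}\ge N$, i.e.\ exactly the paper's monotonicity step, so the two proofs are really dual formulations of the same mechanism. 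Two small points to tidy up. Your parenthetical ``disjoint by $\eta<\eta_N$ up to a harmless slight inflation'' should read $2\eta<\eta_N$, which is in fact the standing assumption of Section~\ref{chap:modenergy}, so nothing breaks. And the nuisance you anticipate about the smearing hypothesis near merging instants is actually vacuous: the invariant \emph{every particle inside a ball $B(a,r)$ of the collection lies at distance $>\eta$ from $\partial B(a,r)$} holds for all $\rho>N\eta$ and is preserved both by multiplicative growth and by the merging rule $B\big(\sum_i a_i r_i/\sum_i r_i,\sum_i r_i\big)$, and it in turn forces particles sitting in a neighbouring (even tangent) ball to lie at distance $>\eta$ from the outer sphere of the annulus under consideration; so Lemma~\ref{lem:comput} applies uniformly along the growth, and your finitely-many-exceptions patch, while harmless, is not actually needed.
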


\begin{proof}
We prove that, for all $R>0$, if $B(y,r)$ is a ball belonging to the collection $\B_N^t(R)$ and contains $n$ of the particles $x_{i,N}^t$'s, then
\begin{align}\label{eq:inductionLB}
\int_{B'(y,r)}|\xi|^\gamma|\nabla h_{N,\eta}^t|^2\ge\frac n{ N^{2}}(g_s(\eta)-g_s(R/N)).
\end{align}
The desired result~\eqref{eq:LB} indeed follows from summing the corresponding inequalities~\eqref{eq:inductionLB} associated with each ball $B(y,r)$ of the collection $\B_N^t(R)$, noting that $B'(y,r)\subset B(y,r)\times\R$. We prove~\eqref{eq:inductionLB} by induction: we first show that it holds when $B(y,r)$ contains only one particle $x_{i,N}^t$, and then that it is preserved through the growth process.

First, suppose that $B(y,r)$ is a ball of $\B_N^t(R)$ and contains only one particle $x_{i,N}^t$. By definition we must have $B(y,r)=B(x_{i,N}^t,r)$ and $x_{j,N}^t\notin B(y,r+\eta)$ for all $j\ne i$. Lemma~\ref{lem:comput} in the form of~\eqref{eq:conjgs2} then yields
\[\int_{B'(y,r)}|\xi|^\gamma|\nabla h_{N,\eta}^t|^2\ge \frac1{N^{2}}(g_s(\eta)-g_s(r)).\]
This proves~\eqref{eq:inductionLB} when $B(y,r)$ contains only one particle $x_{i,N}^t$.

Now we need to prove that~\eqref{eq:inductionLB} is preserved by the growth process, i.e. that it remains true through both expansion and merging of balls. On the one hand, suppose that, for some $R>0$, $B(y,r)$ is a ball of $\B_N^t(R)$ for which~\eqref{eq:inductionLB} holds, and suppose that $B(y,r)$ inflates into $B(y,\alpha r)$ without merging, when passing from $\B_N^t(R)$ to $\B_N^t(\alpha R)$, for some $\alpha>1$. Let $n$ denote the number of particles in $B(y,r)$. By definition, $B(y,\alpha r)$ contains the same number of particles, and the choice of $\eta$ small enough ensures that no particle may lie in the annulus $B(y,\alpha r+\eta)\setminus B(y,\alpha r)$. Hence, Lemma~\ref{lem:comput} in the form of~\eqref{eq:conjgs} yields
\begin{align*}
\int_{B'(y,\alpha r)}|\xi|^\gamma|\nabla h_{N,\eta}^t|^2&\ge \int_{B'(y, r)}|\xi|^\gamma|\nabla h_{N,\eta}^t|^2+\int_{B'(y,\alpha r)\setminus B'(y,r)}|\xi|^\gamma|\nabla h_{N,\eta}^t|^2\\
&\ge\frac n{N^{2}}(g_s(\eta)-g_s(R/N))+\frac {n^2}{N^{2}}(g_s(r)-g_s(\alpha r)).
\end{align*}
Since by definition $r=nR/N$, we find, by the choice $s\le 1$, with $g_s(R/N)-g_s(\alpha R/N)\ge0$,
\begin{align*}
\int_{B'(y,\alpha r)}|\xi|^\gamma|\nabla h_{N,\eta}^t|^2&\ge \frac n{N^{2}} (g_s(\eta)-g_s(R/N))+\frac{n^{2-s}}{N^{2}}(g_s(R/N)-g_s(\alpha R/N))\\
&\ge \frac n{N^{2}} (g_s(\eta)-g_s(R/N))+\frac{n}{N^{2}}(g_s(R/N)-g_s(\alpha R/N))\\
&\ge\frac n{N^{2}}(g_s(\eta)-g_s(\alpha R/N)),
\end{align*}
so that $B(y,\alpha r)$ also satisfies~\eqref{eq:inductionLB}.

On the other hand, suppose that $B(y_{i},r_i)$, $i=1,\ldots,k$, are $k$ disjoint balls of $\B_N^t(R^-)$ for some $R>0$, suppose that each of them satisfies~\eqref{eq:inductionLB}, and suppose that these balls are merged by the growth process into a larger ball $B(y,r)$, which is then disjoint of all other balls of the collection $\B_N^t(R)$. Denoting by $n_i$ the number of points in $B(y_i,r_i)$, we then find
\begin{align*}
\int_{B'(y,r)}|\xi|^\gamma|\nabla h_{N,\eta}^t|^2\ge \sum_{i=1}^k\int_{B'(y_i,r_i)}|\xi|^\gamma|\nabla h_{N,\eta}^t|^2&\ge \frac1{N^{2}}\bigg(\sum_{i=1}^kn_i\bigg)(g_s(\eta)-g_s(R/N)),
\end{align*}
so that $B(y, r)$ also satisfies~\eqref{eq:inductionLB}. This completes the proof.
\end{proof}

We are now in position to prove that both conditions~\eqref{eq:cond1} and~\eqref{eq:cond2} may be satisfied whenever $s<1$, thus finishing the proof of Theorem~\ref{th:mfl}.

\begin{cor}[Checking conditions~\eqref{eq:cond1} and~\eqref{eq:cond2}]
If $0\le s<1$ and if $\B_N^t(\cdot)$ is constructed as above, then conditions~\eqref{eq:cond1} and~\eqref{eq:cond2} are automatically satisfied for any choice $N^{-(1-s)/s}\ll R_N^t\ll1$ if $0<s<1$, and for any choice $e^{-No_N(1)}\lesssim_t R_N^t\ll1$ if $s=0$.
\end{cor}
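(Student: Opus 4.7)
The plan is to combine a simple geometric invariant of the ball construction with a direct application of Proposition~\ref{prop:lowerbound}. The key observation I would establish is that, throughout the growth process, if $r_0$ denotes the common initial radius and $\tau$ the current multiplicative factor, then around each particle $x_{i,N}^t$ the ``virtual disk'' $B(x_{i,N}^t,\tau r_0)$ remains contained in its current enclosing ball. This invariant holds trivially at initial time (the virtual disk equals the initial ball), is preserved under growth (scaling all enclosing balls by the same factor $\lambda\ge 1$ without moving their centers preserves containment), and is preserved under merging: if tangent balls $B(y_1,r_1)$ and $B(y_2,r_2)$ merge into $B(y,r_1+r_2)$ with $y=(y_1r_1+y_2r_2)/(r_1+r_2)$, then $|y-y_i|=r_{3-i}$, so a particle $x$ with $|x-y_1|\le r_1-\tau r_0$ satisfies $|x-y|\le|x-y_1|+|y_1-y|\le(r_1+r_2)-\tau r_0$. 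Since the total sum of radii $N\tau r_0$ is preserved by mergings and scales linearly during growth, stopping the process when this sum equals $R_N^t$ gives $\tau r_0=R_N^t/N$; thus every enclosing ball $B(y_m,r_m)\in\B_N^t(R_N^t)$ has radius $r_m=n_m R_N^t/N$, where $n_m$ is its particle count, and every particle lies at distance at least $R_N^t/N$ from the boundary of its enclosing ball.

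Equipped with this distance lower bound, condition~\eqref{eq:cond2} reduces to a one-line estimate:
\[\frac{1}{N^2}\sum_{i=1}^N g_s^+\!\bigl(d(x_{i,N}^t,\partial\B_N^t(R_N^t))\bigr)\le\frac{g_s^+(R_N^t/N)}{N}.\]
In the case $s>0$, the right-hand side equals $c_{d,s}^{-1}N^{s-1}(R_N^t)^{-s}$, which tends to $0$ precisely when $R_N^t\gg N^{-(1-s)/s}$; in the case $s=0$, it equals $c_{d,0}^{-1}N^{-1}\log(N/R_N^t)$, which tends to $0$ precisely when $R_N^t\gtrsim_t e^{-No_N(1)}$. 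For condition~\eqref{eq:cond1}, I would apply Proposition~\ref{prop:lowerbound} directly with $R=R_N^t$, which for all $\eta<\eta_N\wedge(R_N^t/N)$ yields
\[\int_{\B_N^t(R_N^t)\times\R}|\xi|^\gamma|\nabla h_{N,\eta}^t|^2-\frac{g_s(\eta)}{N}\ge-\frac{g_s(R_N^t/N)}{N}.\]
The right-hand side is independent of $\eta$, so the inner $\liminf_{\eta\downarrow0}$ is trivial, and the outer $\liminf_{N\uparrow\infty}$ is $\ge 0$ under the very same arithmetic conditions on $R_N^t$ computed above. The requirement $s<1$ finally appears as the condition under which the lower bound $R_N^t\gg N^{-(1-s)/s}$ and the upper bound $R_N^t\ll 1$ are simultaneously achievable, since the exponent $-(1-s)/s$ is negative iff $0<s<1$.

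The main obstacle, if any, is the geometric invariant of the first paragraph: it requires careful bookkeeping of the positions of particles inside their enclosing balls through both the scaling and merging steps. Once it is established, both conditions reduce to an essentially mechanical application of Proposition~\ref{prop:lowerbound} together with a short arithmetic check of the decay rate of $g_s(R_N^t/N)/N$.
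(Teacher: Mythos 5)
Your proof is correct and follows essentially the same route as the paper's: apply Proposition~\ref{prop:lowerbound} for condition~\eqref{eq:cond1}, use the containment $\bigcup_{i}B(x_{i,N}^t,R/N)\subset\B_N^t(R)$ for condition~\eqref{eq:cond2}, and reduce both to the single arithmetic requirement $N^{-1}g_s^+(R_N^t/N)\ll1$. The only place you add something is the careful ``virtual disk'' invariant verifying that each particle stays at distance at least $R_N^t/N$ from the boundary of its enclosing ball through expansions and mergings---a fact the paper's proof simply asserts as holding ``by definition'' of the ball construction.
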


\begin{proof}
On the one hand, Proposition~\ref{prop:lowerbound} gives
\[\lim_{\eta\downarrow0}\bigg(\int_{\B_N^t(R)\times\R}|\xi|^\gamma|\nabla h_{N,\eta}^t|^2-\frac1Ng_s(\eta)\bigg)\ge-\frac1Ng_s(R/N).\]
On the other hand, since by definition $\bigcup_{i=1}^NB(x_{i,N}^t,R/N)\subset \B_N^t(R)$, we may estimate
\[\frac1{N^2}\sum_{i=1}^Ng_s^+(d(x_{i,N}^t,\partial\B_N^t(R)))\lesssim\frac1Ng_s^+(R/N).\]
Therefore, both conditions~\eqref{eq:cond1} and~\eqref{eq:cond2} are satisfied if we choose $R_N^t$ such that $\frac1Ng_s^+(R_N^t/N)\ll1$, and the result follows.
\end{proof}

\subsubsection*{Acknowledgements}
The work of the author is supported by F.R.S.-FNRS (Belgian National Fund for Scientific Research) through a Research Fellowship.
The author would also like to warmly thank his PhD advisor Sylvia Serfaty, for suggesting this problem and for very helpful discussions.

\bigskip
\bibliographystyle{plain}
\bibliography{biblio}

\bigskip
{\small (Mitia Duerinckx) {\sc Université Libre de Bruxelles (ULB), Brussels, Belgium, \& MEPHYSTO team, Inria Lille--Nord Europe, Villeneuve d'Ascq, France, \& Laboratoire Jacques-Louis-Lions, Université Pierre et Marie Curie (UPMC), Paris, France}

{\it E-mail address:} mduerinc@ulb.ac.be
}

\end{document}